\newtheorem{proposition}{Proposition}
\newtheorem{theorem}{Theorem}
\newtheorem{lemma}{Lemma}
\newtheorem*{corollary}{Corollary}
\newtheorem{claim}{Claim}
\newtheorem*{thm}{Theorem 1$'$}
\newtheorem*{thmm}{Theorem 1$''$}
\newtheorem*{thmA}{Theorem A}
\newtheorem*{thmB}{Theorem  B}
\newtheorem*{thmC}{Theorem C}
\newtheorem*{thmD}{Theorem D}
\newtheorem*{thmE}{Theorem E}
\newtheorem*{remark}{Remark}
\newcommand{\grad}{\nabla}
\newcommand{\mint}{\backslash\!\!\!\!\!\int_{\mathbb S^{n-1}}}
\newcommand{\wph}{\widehat{\phi}(t,\theta)}
\begin{document}
\title[Asymptotic  behavior  of  solutions  to the $\sigma_k$-Yamabe equation]{Asymptotic behavior  of  solutions  
to the $\sigma_k$-Yamabe equation near isolated singularities}
\author{Zheng-Chao Han }
\address{Department of Mathematics \\Rutgers University \\110 Frelinghuysen Road \\
Piscataway, NJ 08854}
\email{zchan@math.rutgers.edu} 
\author{YanYan Li}
\email{yyli@math.rutgers.edu}
\author{Eduardo V. Teixeira}
\thanks{The research of the second author was supported in part by DMS-0701545, and
the research of the third author was  supported in part by NSF
grant DMS-0600930 and CNPq-Brazil.}
\curraddr[Eduardo V. Teixeira]{Universidade Federal do Cear\'a \\
Departamento de Matem\'atica \\
Av. Humberto Monte, s/n \\
Fortaleza-CE, Brazil.  CEP 60.455-760}
\email{eteixeira@ufc.br}
\date{}
\begin{abstract}
$\sigma_k$-Yamabe equations are conformally invariant equations generalizing the classical  Yamabe equation.
In \cite{Li06} YanYan Li proved that an admissible solution with an isolated singularity at 
$0\in \mathbb R^n$ to the $\sigma_k$-Yamabe equation is asymptotically radially symmetric.
In this work we prove that an admissible solution with an isolated singularity at 
$0\in \mathbb R^n$ to the $\sigma_k$-Yamabe equation is asymptotic to a  radial  solution to the same
equation on $\mathbb R^n \setminus \{0\}$. 
These results generalize earlier pioneering work in this direction on the classical  Yamabe equation by
Caffarelli, Gidas, and Spruck.  In extending the work of Caffarelli et al,
we  formulate and prove a general asymptotic approximation
result for solutions to certain ODEs which include the case for scalar curvature and $\sigma_k$ curvature cases.
An alternative proof is also provided using analysis of the linearized operators at the  radial
solutions, along the lines of approach in a work by Korevaar, Mazzeo, Pacard, and Schoen.
\end{abstract}

\maketitle
\section{Description of the results}
In a classic paper  \cite{CGS} Caffarelli, Gidas, and Spruck proved the asymptotic radial symmetry of 
positive singular solutions $u$ to the conformal scalar curvature equation
\begin{equation}\label{sca}
\Delta u(x) + \frac{n(n-2)}{4} u^{\frac{n+2}{n-2}}(x) =0
\end{equation}
on a punctured ball, and further proved that such solutions are asymptotic to   radial
singular solutions to \eqref{sca} on $\mathbb R^n \setminus \{0\}$. To describe the results of
\cite{CGS} more precisely, we first describe the radial solutions to \eqref{sca}
on $B_R(0)\setminus\{0\}$ for $0< R \le \infty$. A positive solution $u$ to \eqref{sca} corresponds to a 
conformal metric 
$$ g= u^{\frac{4}{n-2}}(x) |dx|^2$$
 with scalar curvature $n(n-1)$. Using the polar coordinates
$x=r\theta$, with $r=|x|$, and $\theta \in \mathbb S^{n-1}$, we can introduce cylindrical variable $t=-\ln r$, so
that
\begin{equation} \label{trans}
g= u^{\frac{4}{n-2}}(x) |dx|^2 =  
\left[r^{\frac{n-2}{2}} u(r\theta)\right]^{\frac{4}{n-2}}  \left( r^{-2} dr^2+ d\theta^2 \right)
= U^{\frac{4}{n-2}}(t, \theta) \left( dt^2 + d\theta^2 \right),
\end{equation}
where $U(t, \theta) = r^{\frac{n-2}{2}} u(r\theta)$. Computing the scalar curvature of $g$ in terms
of $U$ and the background cylindrical metric $ dt^2 + d\theta^2$, we can transform \eqref{sca} into
\begin{equation}\label{cyl}
U_{tt}(t,\theta) + \Delta_{ \mathbb S^{n-1}} U(t,\theta) - \frac{(n-2)^2}{4} U(t,\theta) + \frac{n(n-2)}{4} U^{\frac{n+2}{n-2}} 
(t,\theta) =0.
\end{equation}
If $u(x)=u(|x|)$ is a radial positive solution to \eqref{sca} in some $B_R(0)\setminus\{0\}$, then
$ \psi (t):=U(t,\theta)= r^{\frac{n-2}{2}} u(r\theta)$ is a positive  solution to the ODE
\begin{equation} \label{cylrad}
\psi _{tt}(t) - \frac{(n-2)^2}{4} \psi (t) + \frac{n(n-2)}{4} \psi ^{\frac{n+2}{n-2}}(t) =0,
\end{equation}
for $t > - \ln R$.  Solutions to \eqref{cylrad} has a first integral:
\begin{equation*}
H := \psi _t^2(t) + \frac{(n-2)^2}{4} \left[  \psi ^{\frac{2n}{n-2}}(t) - \psi ^2(t) \right] \equiv \text{const.}
\end{equation*}
along any  positive  solution  $\psi (t)$. 
In fact, positive solutions $U$ to \eqref{cyl} globally defined on the entire cylinder $\mathbb R \times \mathbb S^{n-1}$
(thus positive solutions $u$ to \eqref{sca} on $\mathbb R^n \setminus \{0 \}$)
are classified in \cite{CGS}.
\begin{thmA} (\cite{CGS})
 Let $U(t,\theta)$ be any positive solution to \eqref{cyl} defined on the entire
cylinder $\mathbb R \times \mathbb S^{n-1}$. If $0$ is a non-removable singularity of $u$ in the sense that
\[
\liminf_{t\to \infty} \min_{\theta \in  \mathbb S^{n-1}} U(t,\theta) >0,
\]
then $U$ is independent of $\theta$. Moreover $U(t)$ is a periodic
solution  of \eqref{cylrad}  with $0< U(t) \le 1$ for all
$t \in \mathbb R$ and the first integral $H < 0$.
We refer to these solutions as global singular positive solutions to \eqref{cyl}.
If $0$ is a removable singularity of $u$ in the sense that 
\[
\liminf_{t\to \infty}\min_{\theta \in  \mathbb S^{n-1}}  U(t,\theta) =0,
\]
then the corresponding $u(x)= e^{\frac{n-2}{2} t} U(t,\theta)= |x|^{- \frac{n-2}{2}}U(-\ln |x|, \frac{x}{|x|})$ 
is identically equal to
$\displaystyle{
\left( \frac {2a}{ 1+a^2|x-\bar x|^2 }\right)^{ \frac {n-2}2 }
}
$
in $\mathbb R^n$ for some
$\bar x\in \mathbb R^n$ and
$a>0$.
\end{thmA}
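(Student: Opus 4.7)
The plan is to split the theorem into two essentially independent tasks: first, to establish that $U$ must be $\theta$-independent by applying the method of moving planes to $u$ in the original Euclidean coordinates on $\mathbb R^n\setminus\{0\}$; and second, to classify globally positive solutions of the radial ODE \eqref{cylrad} by phase-plane analysis using the first integral $H$.

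For the symmetry step in the non-removable case, I would apply the Alexandrov--Serrin moving plane procedure to $u$ on $\mathbb R^n\setminus\{0\}$. Fix an arbitrary unit vector $e\in\mathbb S^{n-1}$ and consider the family of hyperplanes $T_\lambda=\{x\cdot e=\lambda\}$ with reflection $x^\lambda=x+2(\lambda-x\cdot e)e$. Because \eqref{sca} is conformally invariant, pre-composing $u$ with a Kelvin transform centered at some $x_0\ne 0$ produces a solution with favorable behavior at the image of $\infty$, which allows one to start the moving plane procedure from $\lambda=+\infty$ in the standard way. The hypothesis $\liminf_{t\to\infty}\min_\theta U(t,\theta)>0$ translates to $\liminf_{|x|\to 0}|x|^{(n-2)/2}u(x)>0$, a quantitative lower bound on $u$ near the singularity strong enough to prevent the reflected plane from crossing $0$. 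Hence the plane must stop exactly at $\lambda=0$, forcing $u$ to be symmetric across the hyperplane through $0$ perpendicular to $e$. Since $e$ is arbitrary, $u$ is radial about the origin, so $U(t,\theta)=\psi(t)$.

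For the ODE classification, $\psi$ solves \eqref{cylrad} globally on $\mathbb R$ and the first integral $H=\psi_t^2+V(\psi)$ is conserved, where $V(\psi):=\frac{(n-2)^2}{4}(\psi^{2n/(n-2)}-\psi^2)$. A direct calculation shows that $V$ vanishes at $\psi=0$ and $\psi=1$, attains a unique negative minimum at some $\psi^*\in(0,1)$, and tends to $+\infty$ as $\psi\to\infty$. The phase-plane picture in $(\psi,\psi_t)$ is then: if $H>0$, the trajectory escapes to $+\infty$ in finite $t$, contradicting global existence; if $H=0$, the trajectory is a homoclinic loop at the saddle $(0,0)$, in particular $\psi(t)\to 0$ as $t\to\pm\infty$; and if $V(\psi^*)\le H<0$, the trajectory is a closed periodic orbit confined to $0<\psi_-\le\psi(t)\le\psi_+<1$. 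The non-removability hypothesis $\liminf\psi>0$ rules out the $H=0$ case, so $H<0$ and $\psi$ is periodic with $0<\psi\le 1$.

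For the removable case $\liminf_{t\to\infty}\min_\theta U(t,\theta)=0$, the same moving plane argument gives, for each direction $e$, a stopping value $\bar\lambda(e)\in\mathbb R$ that need no longer vanish. This produces symmetry about a common point $\bar x\in\mathbb R^n$, forces $u$ to be radially symmetric about $\bar x$, and allows a smooth extension across $0$ to a positive entire solution of \eqref{sca} on $\mathbb R^n$. The classical classification of such entire solutions, together with the known form of the radial solutions (an Emden--Fowler reduction combined with the ODE phase portrait just analyzed), identifies $u$ with the standard bubble $(2a/(1+a^2|x-\bar x|^2))^{(n-2)/2}$.

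The main obstacle I anticipate is the symmetry step in the presence of the singularity at $0$: one must carefully justify the initial position of the moving plane through the Kelvin transform and, crucially, show that the quantitative lower bound on $u$ coming from non-removability really does prevent the plane from sliding past the origin. Once radial symmetry is in hand, the ODE classification via $H$ is a routine phase-plane exercise, and the removable case reduces to the well-known rigidity theorem for positive entire solutions of \eqref{sca}.
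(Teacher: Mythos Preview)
The paper does not prove Theorem~A: it is stated purely as a quoted result from \cite{CGS}, serving as background for the paper's own results on the $\sigma_k$-Yamabe equation. There is therefore no proof in this paper to compare your proposal against. The paper does remark, in connection with the related Theorem~B, that the key ingredient in \cite{CGS} is a ``measure theoretic'' variation of the moving plane technique; your outline is broadly consistent with that approach, and the obstacle you flag at the end---justifying that the moving plane cannot slide past the singular point---is exactly the technical heart of the \cite{CGS} argument, not a routine step that follows from the lower bound alone.
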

Thus global singular positive solutions $U(t,\theta)$ to \eqref{cyl} defined on the entire
cylinder $\mathbb R \times \mathbb S^{n-1}$ can be parametrized by two parameters:
 its minimum value $\epsilon>0$ and
a moment $T$ when it attains this minimum value. 
It turns out that the minimum value $\epsilon>0$ of global singular  
positive solutions  to \eqref{cyl}  has the restriction $0 < \epsilon \le \epsilon_0$, where $\epsilon_0 = \left( 
\frac{n-2}{n}\right)^{\frac{n-2}{4}}$. For any such  $\epsilon $, let $\psi_{ \epsilon }(t)$ denote the solution
to \eqref{cylrad}  such that $\psi_{ \epsilon }(0)= \epsilon$ and $\psi_{ \epsilon }^{'}(0)=0$. Then any
global singular positive solution  to \eqref{cyl} can be represented as $\psi_{\epsilon }(t+\tau)$ for some 
$0 < \epsilon \le \epsilon_0$ and $\tau$.

The main results in  \cite{CGS} on the asymptotic behavior of a positive solution to \eqref{sca} in $B_R\setminus \{0\}$
can be stated as 
\begin{thmB}(\cite{CGS})
Suppose that $u(x)$ is a positive solution to \eqref{sca} in $B_R\setminus \{0\}$ and does not extend to a smooth
solution to \eqref{sca}  over $0$, then 
\begin{equation}\label{ave}
u(x) = \bar u(|x|) \left( 1+O(|x|)\right) \quad \text{as $x \to 0$},
\end{equation}
with
\[
\bar u(|x|) = \mint u(|x|\theta) \, d \theta
\]
being the spherical average of $u$ over the sphere $\partial B_{|x|}(0)$; 
furthermore,
there exists a radial singular solution $u^*(|x|)$ to  \eqref{sca} on $\mathbb R^n \setminus \{0\}$ and
some $\alpha>0$, $0 < \epsilon \le \epsilon_0$ and $\tau$ such that
\begin{equation}\label{radsymm}
u(x) = u^*(|x|) \left(1+ O(|x|^{\alpha})\right) \quad \text{ as $|x| \to 0$.}
\end{equation}
and
\begin{equation*}
 u^*(|x|)=|x|^{-\frac{n-2}{2}} \psi_{\epsilon }(-\ln |x| +\tau) \quad \text{ as $|x| \to 0$.}
\end{equation*}
\end{thmB}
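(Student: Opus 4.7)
The plan is to work in cylindrical coordinates via \eqref{trans}, writing $U(t,\theta)=|x|^{(n-2)/2}u(x)$ with $t=-\ln|x|$, so that $U$ solves \eqref{cyl} on a half-cylinder and the singularity at $x=0$ becomes the behavior as $t\to+\infty$. The target \eqref{radsymm} then translates into: there exist $\epsilon\in(0,\epsilon_0]$ and $\tau\in\mathbb R$ such that $U(t,\theta)=\psi_\epsilon(t+\tau)+O(e^{-\alpha t})$. A preliminary step is the two-sided bound $0<c\le U(t,\theta)\le C$ for $t$ large. The upper bound is a standard blow-up argument using conformal invariance of \eqref{sca}: a blow-up limit would be a positive entire solution of \eqref{sca}, hence a standard bubble by the classification of entire solutions, contradicting nonremovability. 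The lower bound comes from the Harnack inequality on spherical shells $\{e^{-t-1}\le|x|\le e^{-t+1}\}$ combined with the nonremovability hypothesis.

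The asymptotic radial symmetry \eqref{ave} I would prove by an adaptation of the moving plane method to the punctured ball. For each unit vector $e$ and small $\lambda>0$, consider the Kelvin-type reflection of $u$ across the hyperplane $\{x\cdot e=\lambda\}$; the conformal invariance of \eqref{sca} makes the reflected function a solution on the reflected region. Starting the planes near the outer boundary and moving them inward, the two-sided bound on $U$ prevents the singular behavior at $0$ from spoiling the comparison, and the narrow-strip maximum principle gains a factor of $\lambda$ in the sup-difference between $u$ and its reflection. Sweeping over all directions $e$ yields the oscillation estimate
\[
\max_{\partial B_r} u - \min_{\partial B_r} u = O(r)\,\bar u(r),
\]
which is precisely \eqref{ave}, equivalently $U(t,\theta)-\bar U(t)=O(e^{-t})$ uniformly in $\theta$.

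With \eqref{ave} in hand, \eqref{radsymm} follows from a matching argument for the averaged ODE. Setting $V:=U-\bar U$ and averaging \eqref{cyl} over $\mathbb S^{n-1}$, the vanishing of $\int_{\mathbb S^{n-1}} V\,d\theta$ kills the linear Taylor term in the nonlinearity and yields
\[
\bar U_{tt}(t)-\frac{(n-2)^2}{4}\bar U(t)+\frac{n(n-2)}{4}\bar U^{\frac{n+2}{n-2}}(t)=E(t),\qquad |E(t)|=O(e^{-2t}).
\]
The conserved Hamiltonian $H(\psi,\psi_t)=\psi_t^2+\frac{(n-2)^2}{4}(\psi^{\frac{2n}{n-2}}-\psi^2)$ of \eqref{cylrad}, evaluated along $(\bar U,\bar U_t)$, then satisfies $\frac{d}{dt}H(\bar U,\bar U_t)=2\bar U_t E(t)=O(e^{-2t})$ and therefore converges at an exponential rate to some $H_\infty\le 0$; by Theorem A this value selects a unique periodic orbit $\psi_\epsilon$ of \eqref{cylrad}. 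Since the level set $\{H=H_\infty\}$ is a nondegenerate closed curve in the $(\psi,\psi_t)$-plane, the transverse distance from $(\bar U(t),\bar U_t(t))$ to it is controlled by $\sqrt{H(\bar U,\bar U_t)-H_\infty}=O(e^{-t})$. Choosing $\tau$ to synchronize the tangential parameter and comparing flow speeds gives $\bar U(t)=\psi_\epsilon(t+\tau)+O(e^{-\alpha t})$ for some $\alpha>0$, which together with \eqref{ave} yields \eqref{radsymm}.

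The hardest step is the quantitative moving plane argument near the singularity: the singular profile $u\sim|x|^{-(n-2)/2}$ obstructs any direct comparison, so one must calibrate the starting position of the planes against the two-sided bound for $U$ in order to extract a quantitative narrow-strip gain proportional to $\lambda$. A secondary subtlety is the phase synchronization that converts the transverse first-integral estimate into a pointwise comparison of $\bar U$ with a time-shifted $\psi_\epsilon$; this relies on the nondegeneracy of $\psi_\epsilon$ as a periodic orbit, equivalently on the monotonicity of the period of \eqref{cylrad} in $H$, which one checks by an explicit quadrature representation of the period.
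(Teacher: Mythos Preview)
Your outline matches the strategy of \cite{CGS} as recounted here: cylindrical coordinates, two-sided bounds on $U$, moving-plane asymptotic symmetry \eqref{ave}, the averaged ODE \eqref{7.14'}, convergence of the approximate first integral \eqref{7.14}, and matching to a translated $\psi_\epsilon$. The ODE matching step is where you diverge. The paper's Theorem~\ref{odethm} tracks successive critical times $\tau_j$ of $\bar U$ near $m=\min\psi_\epsilon$, bounds $|\bar U(\tau_j)-m|$ via the nondegeneracy \eqref{nond} of the first integral at $m$, and iterates the Gronwall-type estimate \eqref{lin} over consecutive periods to show that the phase shifts $s_j=\tau_j-jT$ form a Cauchy sequence; your proposal instead reads the transverse distance to the orbit directly from $|H-H_\infty|$ and then controls the tangential drift by comparing flow speeds. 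Both routes work; the paper's iteration is more robust (it handles the degenerate constant orbit $\epsilon=\epsilon_0$ uniformly and needs only the integrability hypothesis \eqref{deca}), while your phase-plane picture is more geometric. Two small corrections to your version: on a nondegenerate closed level curve the transverse distance is $O(|H-H_\infty|)$, not $O(\sqrt{|H-H_\infty|})$, so you in fact get a better exponent than you claim; and the phase synchronization does \emph{not} hinge on period monotonicity $T'(H)\ne 0$---what makes the tangential phase $\tau(t)$ converge is simply that its derivative is bounded by $|E(t)|+O(\text{transverse distance})$, which is integrable, using only that the flow speed $|(\psi',\psi'')|$ along a nonconstant periodic orbit is bounded below. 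Finally, the paper stresses that the moving-plane step in \cite{CGS} is a \emph{measure-theoretic} variant, more delicate than the direct narrow-strip adaptation you sketch; you rightly flag this as the hardest step.
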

A key ingredient in the  proof in \cite{CGS} of Theorem B  uses a 
``measure theoretic" variation of the
moving plane technique, which had been developed  by Alexandrov \cite{A5?}, Serrin \cite{S71}, 
and Gidas-Ni-Nirenberg \cite{GNN} to prove symmetries of solutions to certain elliptic PDEs. 
Subsequent to \cite{CGS} there have been many papers related to the theme of 
 Theorem B, including
 \cite{CL95}, \cite{CL96},  \cite{KMPS}, and \cite{TZ06}, among others.
In particular  \cite{KMPS} gives a proof of \eqref{radsymm} and 
provides an  expansion of $u$ after the order $u^*(|x|)$
using rescaling analysis, classification of global
singular solutions as given by Theorem A, and analysis of linearized operators at these global 
singular solutions. 

Our objective in this paper is to study similar problems for
singular solutions in a punctured ball to a family of conformally invariant equations
which  include \eqref{sca}.
More specifically, we consider singular solutions to the equation 
\begin{equation} \label{1}
\sigma_k (g^{-1} \circ A_g) = \text{constant},
\end{equation}
on a punctured ball   $\{\,x\in \mathbb R^n : 0 < |x| < R \,\}$,
 where 
\[
A_g =  \frac {1}{n-2} \{ Ric_g - \frac {R_g}{2(n-1)}g \},
\]
is the Weyl-Schouten tensor of the conformal metric 
$$g = u^{\frac{4}{n-2}}(x) |dx|^2,$$ 
$Ric_g$ and $R_g$ denote
respectively the Ricci and scalar curvature of $g$,
and $\sigma_k(g^{-1} \circ A_g)$ denotes the $k$-th elementary symmetric function of the eigenvalues of $A_g$
with respect to $g$, and $0  < R \le \infty$.  Due to the transformation law,
see e.g. \cite{Via1},
\[
A_g = A_{g_0} + \left[ \nabla^2w+dw \otimes dw - \frac 12 |\nabla w|^2 g_0\right],
\]
when $g=e^{-2w}g_0$,
and
\[
\sigma_k(g^{-1} \circ A_g) = e^{2kw} \sigma_k(g_0^{-1} \circ A_g),
\]
\eqref{1} is equivalent to 
\begin{equation} \label{1w}
\sigma_k(g_0^{-1} \circ \{A_{g_0} +  \nabla^2w+dw \otimes dw - 
\frac 12 |\nabla w|^2 g_0   \}) 
= 
c e^{-2k w},
\end{equation}
for some constant $c$, where we will often
take $g_0$ to be the round cylindrical metric $dt^2+d\theta^2$ on $\mathbb R \times \mathbb S^{n-1}$, 
which is pointwise conformal to the flat metric $|dx|^2$ on $\mathbb R^n$ as seen through \eqref{trans}.
For ease of reference, we also record an equivalent formulation of \eqref{1w} in  terms of  $u(x)$ through the transformation
\eqref{trans}:
\begin{equation}\tag{\ref{1w}$'$}\label{1w'} 
\sigma_k\left( -(n-2) u(x)
 \grad^2 u(x) + n \grad u(x) \otimes \grad u(x) -|\grad u(x)|^2 
\text{Id}\right) 
=
\frac{2^k c}{(n-2)^{2k}}  u^{\frac{2kn}{n-2}}(x).
\end{equation}

The  study of singular solutions
of equations of the above type
 is related to the characterization of the size of 
the limit set of the image domain in $\mathbb S^n$ of
the developing map of a locally conformally flat $n$-manifold. More specifically, one is led to  looking   for
necessary/sufficient conditions  on a domain $\Omega \subset
\mathbb S^n$ so that it 
admits a  metric $g$  which is pointwise conformal to the standard metric on $\mathbb S^n$,  complete, 
and with its Weyl-Schouten
tensor $A_g$ in the $\Gamma^{\pm}_k$ class, \emph{i.e.}, the eigenvalues, 
$\lambda_1 \le \cdots \le \lambda_n$, of $A_g$ at each $x \in  \Omega$ satisfy
$ \sigma_j (\lambda_1 , \cdots , \lambda_n) >0$ for all $j, 1\le j \le k$, 
in the case of 
$\Gamma^+_k$; and $(-1)^j \sigma_j (\lambda_1 , \cdots , \lambda_n) >0$ 
for all $j, 1\le j \le k$, in the case of $\Gamma^-_{k}$.  For $k \ge
2$, 
it is often 
restricted
 to metrics whose Weyl-Schouten tensor is in  the $\Gamma^{\pm}_k$ class,
 because, for a metric in such a class, \eqref{1w} becomes a  
fully nonlinear PDE in $w$ that is 
\emph{elliptic}.
 In the case of $k=1$, $\sigma_1(A_g)$ is simply a  positive constant multiple of the scalar curvature of $g$; so
 $A_g$ in the $\Gamma^{\pm}_k$ class is a generalization of the notion that the scalar
curvature $R_g$ of $g$ having a fixed $\pm$ sign. For the positive scalar curvature case, 
Schoen and Yau proved in \cite{SY} 
 that if a complete conformal metric $g$
exists on a domain $\Omega \subset \mathbb S^n$ with $\sigma_1 (A_g)$ having a positive lower bound, 
then the Hausdorff dimension of $\partial \Omega$ has to be $\leq (n-2)/2$. In \cite{S88} Schoen constructed
complete conformal metrics on $\mathbb S^n \setminus \Lambda$ when $\Lambda$ is either a finite discrete set on
$\mathbb S^n$ containing at least two points or a set arising as the limit set of a Kleinian group action. Later 
Mazzeo and Pacard \cite{MP1} proved that if $\Omega \subset \mathbb S^n$ is a domain such that
$\mathbb S^n \setminus \Omega$ consists a finite number of
disjoint smooth submanifolds of dimension 
$1\le k \le (n-2)/2 $,
then one can find a complete conformal metric $g$ on  $ \Omega$ with its scalar
curvature  identical to $+1$. For the negative scalar curvature case, the results of
 Loewner-Nirenberg \cite{LN}, Aviles \cite{Av}, and Veron \cite{Ver} imply that if $\Omega \subset \mathbb S^n$
admits a complete, conformal metric with negative constant scalar curvature, then the Hausdorff
dimension of $\partial \Omega > (n-2)/2$.   Loewner-Nirenberg \cite{LN} also proved that  if 
$\Omega \subset \mathbb S^n$ is a domain with smooth boundary $\partial \Omega$ of dimension $> (n-2)/2$,
then there exists a  complete conformal metric $g$
on  $\Omega$ with $\sigma_1 (A_g) = -1$. This result was later generalized by D. Finn \cite{Fin} to the case
of $\partial \Omega$ consisting of smooth submanifolds of dimension $> (n-2)/2$ and  with boundary. 
For more recent development  related to the  negative scalar curvature case, 
see \cite{Lab03}, \cite{leG}, \cite{MV09}  and the references therein.
The consideration of singular solutions of equations of type \eqref{1w} can be considered as a natural 
generalization of these known results. In fact, in \cite{CHgY}, Chang, Hang, and Yang proved that if
$\Omega \subset \mathbb S^n$ ($n \geq 5$) admits a complete, conformal metric $g$ with 
$$\sigma_1(A_g) \geq c_1 >0, \quad  \sigma_2(A_g) \geq 0, \quad \text{and}$$
\begin{equation} \label{ub}
|R_g| +|\grad_g R|_g \leq c_0, 
\end{equation}
then $\dim (\mathbb S^n \setminus \Omega) < (n-4)/2$. This has been generalized by M. Gonzalez \cite{G2}
and Guan, Lin and
Wang \cite{GLW} to the
case of $2 < k < n/2$: if
$\Omega \subset \mathbb S^n$  admits a complete, conformal metric $g$ with 
$$\sigma_1(A_g) \geq c_1 >0, \quad  \sigma_2(A_g),\;\cdots\;, \sigma_k(A_g)  \geq 0, \quad \text{and}$$
\eqref{ub}, then $\dim (\mathbb S^n \setminus \Omega) < (n-2k)/2$.

We restrict our attention in this paper to singular solutions with isolated singularity. 
We say a solution to \eqref{1w} or \eqref{1w'} 
is in the $\Gamma^+_k$ class in some region if its associated Weyl-Schouten tensor is in the $\Gamma^+_k$ there;
for a positive function $u$  to \eqref{1w'}, this means that  the matrix 
$ \left(-(n-2) u(x)
 \grad^2 u(x) + n \grad u(x) \otimes \grad u(x) -|\grad u(x)|^2
\text{Id}\right)
$ belong to $\Gamma^+_k$.
Our main result is 
\begin{theorem}\label{main}
Let $w(t,\theta)$ be a smooth
 solution to \eqref{1w} on
$\{t>t_0\}\times \mathbb S^{n-1}$ in the $\Gamma^+_k$ class, where  $n\ge 3$,
 $2\le k \le n$, and the constant $c>0$.
  Then there exist a \underline{radial} solution $w^*(t)$ 
to \eqref{1w} on $\mathbb R \times \mathbb S^{n-1}$ in the $\Gamma^+_k$ class,
and constants $\alpha>0$, $C>0$ such that
\begin{equation}\label{maincon0}
|w(t,\theta)-w^*(t)| \le C  e^{-\alpha t}\quad \text{for $t>t_0+1$}.
\end{equation}
\end{theorem}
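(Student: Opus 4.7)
The plan is to combine the asymptotic radial symmetry result of Li \cite{Li06} with a Hamiltonian argument that identifies a single radial profile, and then close the exponential estimate by linear analysis on the cylinder. Since $w$ lies in the $\Gamma^+_k$ class, the equation \eqref{1w} is uniformly elliptic along $w$, so standard regularity gives uniform $C^m$-bounds on translates $w(\,\cdot+t_j,\,\cdot\,)$ on bounded cylinders. Li's theorem yields $w(t,\theta)-\bar w(t)\to 0$ as $t\to\infty$, where $\bar w$ is the spherical mean, so any subsequential limit of these translates is a smooth \emph{radial} global solution to \eqref{1w} on $\mathbb{R}\times\mathbb{S}^{n-1}$ in $\Gamma^+_k$. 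The radial reduction of \eqref{1w} is an autonomous second-order ODE admitting a first integral $H(\phi,\phi_t)$, and the non-constant bounded positive solutions form a one-parameter family of periodic orbits $\psi_\epsilon$, in analogy with Theorem A.

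To single out a definite pair $(\epsilon,\tau)$, I would study $H$ along the spherical average. Averaging \eqref{1w} and pairing with $\bar w'(t)$, and using the divergence structure of $\sigma_k$, one expects an identity of the form
\begin{equation*}
\frac{d}{dt}H\bigl(\bar w(t),\bar w'(t)\bigr)=R(t),
\end{equation*}
where $R(t)$ is controlled by the non-radial oscillation $w-\bar w$ and so tends to $0$. If this remainder is integrable, $H$ has a finite limit $H_\infty$ along the mean, which determines $\epsilon$ uniquely; the phase $\tau$ is then fixed by matching $\bar w$ to $\psi_\epsilon(\,\cdot+\tau)$ along a sequence $t_j\to\infty$, and a compactness-uniqueness argument shows the choice is independent of the sequence. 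This produces the candidate $w^*(t):=\psi_\epsilon(t+\tau)$.

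For the exponential rate, set $v:=w-w^*$. Taylor-expanding \eqref{1w} gives $\mathcal{L}_{w^*}v=N(v,\nabla v,\nabla^2 v)$, where $\mathcal{L}_{w^*}$ is the linearization at $w^*$ and $N$ is at least quadratic in $v$ and its derivatives. Expanding $v(t,\theta)=\sum_{j\ge 0}v_j(t)\,Y_j(\theta)$ in spherical harmonics on $\mathbb{S}^{n-1}$ decouples the linear part into second-order ODEs with periodic coefficients (since $w^*$ is periodic in $t$). For $j\ge 1$, the Laplacian eigenvalue $\lambda_j\ge n-1$ pushes the Floquet exponents off the imaginary axis and forces exponential decay for every bounded solution; for $j=0$ the kernel is spanned by the Jacobi fields $\partial_t w^*$ and $\partial_\epsilon\psi_\epsilon$, both of which are absorbed into a small readjustment of $(\epsilon,\tau)$. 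A bootstrap from the qualitative decay $v=o(1)$ to polynomial and then exponential decay closes the estimate against the nonlinear term $N(v)$.

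The principal obstacle, I expect, is the zero-mode spectral analysis: proving intrinsically, without explicit control of $\psi_\epsilon$, that the only bounded solutions of the linearized radial ODE are the two Jacobi fields above, and that these neutral directions are precisely the ones killed by the two-parameter reparametrization $(\epsilon,\tau)$. A secondary technical issue is quantifying Li's convergence enough to ensure integrability of $R(t)$ in the Hamiltonian step; lacking an a priori rate one would replace this by a compactness-uniqueness scheme showing that every subsequential limit carries the same $H_\infty$, and then deduce convergence of the full trajectory to a single periodic orbit from the phase portrait of the radial ODE.
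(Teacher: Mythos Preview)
Your outline is essentially the paper's second proof (Section 4, following \cite{KMPS}), with the Hamiltonian step on the spherical average echoing the paper's first proof (Section 3). Two points where your assessment of the difficulties is off, though neither is fatal:

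First, you worry about ``quantifying Li's convergence enough to ensure integrability of $R(t)$.'' In fact Li's theorem (Theorem E here) already gives the explicit rate $|w(t,\theta)-\gamma(t)|\le Ce^{-t}$, not just $o(1)$; combined with gradient estimates and interpolation this yields $e^{-(1-\delta)t}$ control on derivatives as well. So the approximate first integral has exponentially small error from the outset, and your fallback compactness-uniqueness scheme is unnecessary.

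Second, you single out the $j=0$ mode as the ``principal obstacle.'' It is actually the easiest: the Jacobi fields $\partial_t\xi_h$ and $\partial_h\xi_h$ are explicit, and one of them grows only linearly, so it cannot obstruct a bounded $v$. The genuine work lies in the modes $j\ge 1$. For $\lambda_j=n-1$ the paper finds the explicit basis $(1\pm\xi_h')e^{\mp t}$ by exploiting translation invariance of \eqref{1w'} in $x$; for $\lambda_j\ge 2n$ one checks by direct computation (using the radial equation and its first integral) that the zeroth-order coefficient of $L_j$ has a negative upper bound, which yields the exponential dichotomy via maximum principle rather than abstract Floquet theory. Your statement that ``$\lambda_j\ge n-1$ pushes the Floquet exponents off the imaginary axis'' is the right conclusion but hides exactly this computation, which is the substantive step.

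The paper also offers a genuinely different route to the core case $2k<n$, $h>0$: rather than linearizing at $w^*$, it proves a general ODE approximation theorem (Theorem~\ref{odethm}) showing that any solution of a perturbed autonomous second-order equation with an approximate first integral converges, with explicit rate, to a translate of a periodic orbit. This bypasses the spectral analysis entirely for the main estimate.
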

We can formulate Theorem~\ref{main} in terms of the variable $u(x)$ defined on 
$B_R\setminus\{0\}$ through \eqref{trans}:
\begin{thm}
Let  $u(x)$ be a positive
smooth
 solution to \eqref{1w'} on $B_R \setminus \{0\}$ in the $\Gamma^+_k$ class,
 where $n\ge 3$, $2\le k \le n$,
$R>0$,  and $c>0$, 
then there exist 
a positive radial smooth
 solution $u^*(|x|)$ to \eqref{1w'} on $\mathbb R^n
\setminus \{0\}$ in the $\Gamma^+_k$ class, and  constants $\alpha>0$, $C>0$ such that
\begin{equation}\label{mainconu}
|u(x)-u^*(|x|)| \le C|x|^{\alpha} u^*(|x|)\quad \text{for $|x|< R/2$}.
\end{equation}
\end{thm}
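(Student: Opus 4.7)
The plan is to work in the cylindrical variables of \eqref{trans} and prove Theorem~\ref{main}; undoing the change $t=-\ln r$, $U=r^{(n-2)/2}u$ converts the exponential decay $e^{-\alpha t}$ on the cylinder into the factor $|x|^{\alpha}$ on the right of \eqref{mainconu}. The starting input is the asymptotic radial symmetry of admissible solutions established in \cite{Li06}: for the spherical average $\bar w(t)$ of $w$, one has $w(t,\theta) - \bar w(t) \to 0$ as $t\to\infty$, and by interior Schauder estimates on unit cylinders (the equation being uniformly elliptic in the $\Gamma^+_k$ cone once suitable derivative bounds on $w$ are in place) this convergence holds also in $C^2(\mathbb S^{n-1})$. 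It remains to identify a genuine radial solution $w^*(t)$ of \eqref{1w} on all of $\mathbb R \times \mathbb S^{n-1}$ and to sharpen both the radial approximation $\bar w \to w^*$ and the vanishing of $w - \bar w$ to the common exponential rate \eqref{maincon0}.

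I would proceed first via the ODE route announced in the abstract. Writing $w = \bar w + \phi$ in \eqref{1w}, Taylor-expanding the $\sigma_k$ operator about the diagonal matrix corresponding to the radial profile, and averaging over $\mathbb S^{n-1}$ yields an autonomous ODE $F_k(\bar w, \bar w', \bar w'') = c e^{-2k \bar w} + E(t)$, where $F_k$ is the radial $\sigma_k$ operator on the cylinder and the error $E(t)$ is quadratic in $\phi$ and its derivatives, hence $o(1)$ as $t \to \infty$. I would then prove a general asymptotic-approximation ODE lemma: every bounded $\bar w$ in the radial $\Gamma^+_k$ cone solving such a perturbed radial $\sigma_k$-equation is asymptotic to some genuine bounded radial solution $w^*$ on $\mathbb R$, at an exponential rate whenever $E(t)$ is exponentially small. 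This step uses $C^2_{\mathrm{loc}}$-compactness of the translates $\bar w(\cdot + T_j)$ to extract limits, the $\sigma_k$-analog of Theorem A to identify any such limit as an element of the two-parameter family $\psi_\epsilon(t+\tau)$, and a Lyapunov argument in the phase plane $(\psi, \psi')$ of the unperturbed ODE, which is autonomous second-order and thus possesses a first integral generalizing $H$.

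The final and most delicate step is to close the bootstrap between the two exponential rates for $w - \bar w$ and $\bar w - w^*$, and here I would follow the Korevaar-Mazzeo-Pacard-Schoen linearization strategy as a parallel (or alternative) route. Linearizing \eqref{1w} at $w^*$ produces an operator $L_{w^*}$ on $\mathbb R \times \mathbb S^{n-1}$ that is strictly elliptic by the $\Gamma^+_k$ hypothesis, has coefficients periodic in $t$, and is rotationally invariant in $\theta$; decomposing the deviation $w - w^*$ in spherical harmonics reduces $L_{w^*}$ to a sequence of one-dimensional Hill-type operators $L_j$, and Floquet theory supplies characteristic exponents whose strict negativity drives the decay. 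The main obstacle — and the point that distinguishes the $\sigma_k$ case from the scalar-curvature case of \cite{KMPS} — is establishing a uniform spectral gap: non-decaying Floquet modes can occur only for finitely many low $j$ and correspond for $j=0$ to the infinitesimal generators $\partial_\epsilon\psi_\epsilon$, $\partial_\tau\psi_\epsilon$ of the radial family and for $j=1$ to the conformal translations of the singularity, all of which can be absorbed by optimally choosing $w^*$ and the location of the origin; ruling out further non-decaying modes for $j\ge 2$ in the fully nonlinear setting requires careful use of the conformal symmetry of \eqref{1w} and of monotonicity properties of the first integral along the one-parameter family $\psi_\epsilon$. Once the gap is in hand, a standard bootstrap between the linearized decay and the nonlinear difference equation delivers \eqref{maincon0}, and thence the equivalent bound \eqref{mainconu}.
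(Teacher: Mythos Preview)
Your outline matches the paper's two-route strategy closely: the ODE/first-integral argument is precisely Theorem~\ref{odethm} applied to the approximate radial equation \eqref{RE'}--\eqref{FI'} derived from \eqref{ave3}--\eqref{grad3}, and the linearization route follows the assertions (a)--(e) in Section~4 with Proposition~\ref{la1} supplying the spectral gap.

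One correction on the mechanism for the gap: for the higher spherical-harmonic modes ($\lambda_j \ge 2n$, i.e.\ degree $\ge 2$), the paper does \emph{not} appeal to conformal symmetry or to monotonicity of the first integral along the family $\psi_\epsilon$. Instead it verifies by a direct algebraic computation---see \eqref{nup} and the chain of inequalities after it, using only \eqref{krad} and the first integral $e^{(2k-n)\xi}(1-\xi_t^2)^k = e^{-n\xi}+h$---that the zeroth-order coefficient of $L_j$ has a strictly negative upper bound; a barrier argument with $e^{\pm\lambda t}$ then gives the exponential dichotomy. Conformal symmetry (translation invariance $u(\cdot)\mapsto u(\cdot+a)$) is used only to exhibit the explicit basis $(1\mp\xi_t)e^{\pm t}$ for $\lambda_j=n-1$, and since that basis already contains a mode decaying like $e^{-t}$, no re-centering of the singularity is needed for Theorem~\ref{main} itself---the absorption you describe enters only at the level of the higher-order expansion in Theorem~\ref{hodthm}.
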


\begin{remark}
As a consequence of 
Theorem \ref{hodthm} below, the $\alpha$ in 
Theorem ~\ref{main} and ~\ref{main}$'$
can be any number in $(0, 1)$, while the constant
$C$ depends also on $\alpha$.
\end{remark}

\begin{remark}
The  $k=1$ case of Theorems~\ref{main} and ~\ref{main}$'$ was proved in \cite{CGS}, as remarked earlier.
In the situation of  Theorem~\ref{main} $'$, the asymptotic symmetry of $u(x)$, a positive  solution to \eqref{1w'} on 
$B_R \setminus \{0\}$ in the $\Gamma^+_k$ class, was
proved in \cite{Li06}, namely, for some constant $C>0$, 
\[
|u(x)-\bar u(|x|)| \le C|x| \bar u(|x|),
\]
where $\bar u(|x|)$ is the spherical average of $u(x)$. This is a generalization of the result
\eqref{ave} in \cite{CGS} for solutions to \eqref{sca}, and will be a starting point for our proof of
Theorem~\ref{main}.
\end{remark}
We can describe the asymptotic behavior of solutions in
Theorems~\ref{main} and \ref{main}$'$ in more explicit terms after describing the classification
results from \cite{CHY05} on the radial solutions to \eqref{1w} for $k>1$ in the $\Gamma^+_k$ class
globally defined on $\mathbb R \times \mathbb S^{n-1}$.
Let us first work out \eqref{1w} more explicitly in the case of radial
solutions.

To fix our notations,  we introduce    
new  variables  $v(t,\theta)$ and $w(t,\theta)$ such that
\begin{equation} \tag{\ref{trans}$'$}\label{trans'}
g= u^{\frac{4}{n-2}}(x)|dx|^2 = v^{-2}(x) |dx|^2= U^{\frac{4}{n-2}}(t,\theta)(dt^2 +d \theta^2)
= e^{-2w(t,\theta)} (dt^2 +d \theta^2),
\end{equation}
where $t = - \ln |x|$ and $\theta = x/|x|$. Thus 
\begin{equation}\label{change}
|x|^{\frac{n-2}{2}}u(x) =\left(\frac{|x|}{v(x)}\right)^{\frac{n-2}{2}}= U(t,\theta)=e^{-\frac{n-2}{2}w (t,\theta)}.
\end{equation}
Following the notation in \cite{CHY05},  the Schouten tensor of $g$ 
can be computed as, when $v=v(|x|)$,
$$
A_{ij}  
= \frac {v_{ij}}{v} - \frac {|\grad v|^2}{2v^2} \delta_{ij} 
= \lambda \delta_{ij} + \mu \frac{x_ix_j}{|x|^2},
$$
with $\lambda = \frac{v_r}{rv} (1 - \frac{rv_r}{2v})$ and 
 $\mu = \frac{v_{rr}}{v} -\frac{v_r}{rv}$.
The eigenvalues of $A$ with respect to $|dx|^2$ are $\lambda$ with
 multiplicity $(n-1)$, and 
$\lambda +\mu$ with multiplicity $1$. The formula for $\sigma_k(g^{-1}\circ A_g)$ can
be found easily by the binomial expansion of $(x-\lambda)^{n-1}(x-\lambda-\mu)$:
\begin{equation} \label{2}
\sigma_k(g^{-1}\circ A_g) = c_{n,k} v^{2k} \lambda^{k-1} (n\lambda + k \mu),
\end{equation}
where $c_{n,k} = \frac {(n-1)!}{k! (n-k)!}=\frac 1n \binom{n}{k}$.

\begin{remark}
The convention $t = -\ln r$ here is off by a sign with the convention in \cite{CHY05}, and the transformation
from $v$ to $w$ is adjusted from \cite{CHY05} accordingly.
\end{remark}

\[v_r(x)=e^{w(t,\theta)}(-w_t(t,\theta)+1)=(-w_t(t,\theta)+1)v(x)/r, 
\]
and
\[
v_{rr}(x)= e^{w(t,\theta)+t}[w_{tt}(t,\theta)-w_t(t,\theta)(-w_t(t,\theta)+1)] 
= [w_{tt}(t,\theta)-w_t(t,\theta)(-w_t(t,\theta)+1)] v(x)e^{2t}.
\]
Thus when $v=v(|x|)$, we find $ w(t,\theta)= t+ \ln v(e^{-t}) =: \xi (t)$ is a function of $t$, and 
\[
\lambda = \frac 12 e^{2t} (1-\xi_t^2), \qquad \text{and} \qquad 
\mu = e^{2t}(\xi_{tt} + \xi_t^2 -1).
\]
Using \eqref{2}, \eqref{1w} in the radial case then becomes
\begin{align} \label{3}
c=\sigma_k(A_g) &= c_{n,k} e^{2k(\xi+t)}\frac{(1-\xi_t^2)^{k-1}}{2^{k-1} e^{2(k-1)t}}
\left[n \frac{1-\xi_t^2}{2e^{2t}} +  k\frac{\xi_{tt}+\xi_t^2-1}{e^{2t}}\right] \notag \\
&= c_{n,k}^{'} (1-\xi_t^2)^{k-1} \left[\frac{k}{n} \xi_{tt} + (\frac{1}{2} -\frac{k}{n})(1-\xi_t^2)\right]e^{2k\xi},
\end{align}
where $c_{n,k}^{'} = n c_{n,k} 2^{1-k}= 2^{1-k}\binom{n}{k} $.
Thus \eqref{3} is the radial case of \eqref{1}, written in cylindrical coordinate $t$ and the variable $w(t,\theta)=\xi(t)$.
In general, we will allow ourselves the flexibility of  treating \eqref{1} either as an equation 
for $u(x)$ on $B_R \setminus \{0\}$ or as an equation for $U(t,\theta)$ or $w(t,\theta)$ 
on a cylinder $\{(t,\theta): t> - \ln R, \theta \in 
\mathbb S^{n-1} \}$ with respect to the background metric $dt^2+d\theta^2$.

 
Now we record the relevant part of the results in  \cite{CHY05} regarding 
radial solutions of \eqref{3} in the $\Gamma^+_k$ class on the entire $\mathbb R \times \mathbb S^{n-1}$, 
when $k>1$ and
$\sigma_k$ is a \emph{positive} constant, normalized to be $2^{-k}\binom{n}{k}$.
\begin{thmC} (\cite{CHY05})
Any  radial solution $\xi (t) :=w(t,\theta)$ of \eqref{1w} in the $\Gamma^+_k$ class on the \underline{entire}
$\mathbb R \times \mathbb S^{n-1}$, when $k>1$ and 
 $c$ is a \emph{positive} constant, normalized to be $2^{-k}\binom{n}{k}$, has the property that
$1-\xi_t^2 > 0$
for all $t$. Furthermore, $h:= e^{(2k-n)\xi (t)}(1-\xi_t^2 (t))^k - e^{-n\xi (t)}$ is a  nonnegative
constant.  Moreover 
  \begin{enumerate}
	\item  If $h=0$, then 
	   $u^{\frac{4}{n-2}}(|x|)= \left(\frac{2\rho}{|x|^2+\rho^2}\right)^2$ for some positive parameter
	$\rho$. So these solutions  give rise to the round
	  spherical metric on $\mathbb R^n \cup \{ \infty \}= \mathbb S^n$.
	\item If $h>0$, then the behavior of $u$ is classified according to the relation between 
	      $2k$ and $n$:
		\begin{enumerate} 
			\item If $2k < n$, then $h$ has the further restriction
			$h \leq  h^* :=\frac{2k}{n-2k}\left( \frac{n-2k}{n} \right)^\frac{n}{2k}$ and
          			 $\xi (t)$ is a periodic
          			function of $t$, giving rise to a metric 
				$g = \frac{e^{-2\xi(\ln |x|)}}{|x|^2} |dx|^2 $ on
          			$\mathbb R^n \setminus \{0\}$ which is complete. Note that the case $h=h^*$
				gives rise to the cylindrical metric $\frac{|dx|^2}{|x|^2}$ on 
				$\mathbb R^n \setminus \{ 0\}$.
			\item If  $2k = n$, then $h$ satisfies the further restriction $h<1$ and
          			as $|x| \to 0$, $g=u^{\frac{4}{n-2}}(|x|)|dx|^2$ has the asymptotic
				$$g \sim |x|^{-2(1- \sqrt{1-\sqrt[k]{h}})} |dx|^2
                             =e^{-\left(2 \sqrt{1-\sqrt[k]{h}}\right)t}
                            (dt^2+d\theta^2),$$ 
                                and as $|x| \to \infty$,
				$g=u^{\frac{4}{n-2}}(|x|)|dx|^2$ has the asymptotic 
				$$g \sim |x|^{-2(1+ \sqrt{1-\sqrt[k]{h}})} |dx|^2=e^{ 2 \sqrt{1-\sqrt[k]{h}}t}
                                   (dt^2+d\theta^2).$$ 
                                Thus  
				$g$ gives rise to a metric on  $\mathbb R^n
          			\setminus \{0\}$ singular at $0$ and at $\infty$ which
          			behaves like the cone metric, is incomplete with
          			finite volume.
			\item If $2k > n$, then $u^{\frac{4}{n-2}}(|x|)$ has an asymptotic 
				expansion of the form 
		\[
		u^{\frac{4}{n-2}}(|x|) = \rho^{-2}\{1- \sqrt[k]{h} \frac{k}{2k-n} 
				\left( \frac{|x|}{\rho}\right)^{2-\frac{n}{k}}+ \cdots \}
		\] 
				as $|x| \to 0$, where $\rho>0$ is a positive parameter,
				thus $u(|x|)$ has a positive, finite limit,  but $u_{rr}(|x|)$
				 blows up at $|x| \to 0$. The behavior of $u$
				as $|x|\to \infty$ can be described similarly. Putting together, 
				we conclude that
				$u^{\frac{4}{n-2}}(|x|)|dx|^2$ extends to a $C^{2-\frac{n}{k}}$ metric on $\mathbb S^n$.
                \end{enumerate}
\end{enumerate}
\end{thmC}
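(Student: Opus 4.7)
The plan is to reduce the second-order autonomous ODE \eqref{3} to a first-order equation by finding a conserved quantity, then to classify its trajectories case by case. Conformal invariance under scaling $x \mapsto \rho x$ corresponds to $t$-translation invariance of \eqref{3}, so a first integral must exist. After normalizing $c = 2^{-k}\binom{n}{k}$, I would rewrite \eqref{3} as
\[
(1-\xi_t^2)^{k-1}\Bigl[\tfrac{k}{n}\xi_{tt} + \bigl(\tfrac{1}{2}-\tfrac{k}{n}\bigr)(1-\xi_t^2)\Bigr] = \tfrac{1}{2}e^{-2k\xi},
\]
multiply by $\xi_t$, and seek an integrating factor of the form $e^{a\xi}$. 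Using $\tfrac{d}{dt}(1-\xi_t^2)^k = -2k\xi_t(1-\xi_t^2)^{k-1}\xi_{tt}$, a direct computation then yields
\[
\frac{d}{dt}\Bigl[e^{(2k-n)\xi}(1-\xi_t^2)^k - e^{-n\xi}\Bigr] = 0,
\]
so $h := e^{(2k-n)\xi}(1-\xi_t^2)^k - e^{-n\xi}$ is constant along solutions.

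Next I would establish $1-\xi_t^2 > 0$. The eigenvalues displayed in the excerpt are $\lambda = \tfrac{1}{2}e^{2t}(1-\xi_t^2)$ with multiplicity $n-1$ and $\lambda+\mu$ with multiplicity $1$. The $\Gamma^+_k$ condition with $k\ge 2$ requires in particular $\sigma_1,\sigma_2>0$. If $\lambda\le 0$, then $\sigma_1=n\lambda+\mu>0$ forces $\mu>-n\lambda$, hence $\lambda+\mu>(n-1)|\lambda|$, whereupon $\sigma_2=\binom{n-1}{2}\lambda^2+(n-1)\lambda(\lambda+\mu)$ becomes strictly negative by an elementary calculation---a contradiction. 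Hence $\lambda>0$, i.e.\ $|\xi_t|<1$ everywhere, and \eqref{3} reduces to the separable first-order equation
\[
\xi_t^2 = 1 - \bigl[h\, e^{(n-2k)\xi} + e^{-2k\xi}\bigr]^{1/k}.
\]

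The classification now follows from analyzing the effective potential $F(\xi) := h e^{(n-2k)\xi} + e^{-2k\xi}$ subject to $0\le F\le 1$. Global existence on $\mathbb{R}$ rules out $h<0$: in each of the cases $2k<n$, $2k=n$, $2k>n$, one of the exponentials in $F$ dominates somewhere in the attained $\xi$-range and drives $F$ negative, contradicting $F=(1-\xi_t^2)^k\ge 0$. When $h=0$, the equation $\xi_t^2=1-e^{-2\xi}$ integrates via the substitution $u:=e^\xi$ to $e^\xi=\cosh(t-t_0)$, which after the change of variables \eqref{trans'} produces the bubble $(2\rho/(|x|^2+\rho^2))^2|dx|^2$. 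For $h>0$, the three subcases come from the shape of $F$: if $2k<n$, then $F$ is strictly convex with unique minimum value $\tfrac{n}{n-2k}\bigl(\tfrac{(n-2k)h}{2k}\bigr)^{2k/n}$, and requiring this to be $\le 1$ yields $h\le h^\ast$, with $\xi$ oscillating periodically between the two roots of $F=1$ (the case $h=h^\ast$ collapsing to the constant solution, i.e.\ the cylindrical metric); if $2k=n$, then $F=h+e^{-2k\xi}$ is monotone with limit $h$ at $+\infty$, forcing $h<1$ and $\xi_t\to\pm\sqrt{1-h^{1/k}}$ at $\pm\infty$, which yields the conical asymptotic; if $2k>n$, then $F\to 0$ at $+\infty$ and $\to+\infty$ at $-\infty$, so $F=1$ has a unique root $\xi_0$ where $\xi_t$ vanishes, and Taylor-expanding $F$ about $\xi_0$ gives the claimed leading-order expansion for $u^{4/(n-2)}$.

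The main obstacle is locating the first integral: while $t$-translation invariance of \eqref{3} predicts the existence of a conserved quantity, its specific form $e^{(2k-n)\xi}(1-\xi_t^2)^k - e^{-n\xi}$ is not self-suggesting and requires either a clever ansatz exploiting the two exponents $2k-n$ and $-n$ naturally present in the equation, or a systematic search. Once $h$ is in hand, the rest is convex-function and phase-plane bookkeeping; the remaining subtleties are rigorously ruling out $h<0$ for globally defined solutions, sharpening $h\le h^\ast$ in subcase (2a), and converting the $\xi$-expansion at $\xi_0$ into the $C^{2-n/k}$ regularity statement in subcase (2c).
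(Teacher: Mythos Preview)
Theorem C is not proved in this paper; it is quoted verbatim from \cite{CHY05} as background for the classification of radial solutions. So there is no ``paper's own proof'' to compare against here.

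That said, your sketch is correct and is essentially the argument one finds in \cite{CHY05}. The derivation of the first integral $h$ by multiplying \eqref{3} through by $n\xi_t e^{-n\xi}$ and recognizing a total derivative is exactly the computation the present paper itself performs (in its perturbed form) in the proof of \eqref{FI2} in Claim~1. Your $\Gamma_2^+$ argument for $1-\xi_t^2>0$ is clean and correct: with eigenvalues $\lambda$ (multiplicity $n-1$) and $\lambda+\mu$, one has $\sigma_2=\binom{n-1}{2}\lambda^2+(n-1)\lambda(\lambda+\mu)$, and $\lambda\le 0$ together with $\sigma_1>0$ forces $\sigma_2\le 0$. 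The phase-plane analysis of $F(\xi)=he^{(n-2k)\xi}+e^{-2k\xi}$ that you outline---convex with a minimum when $2k<n$, monotone with horizontal asymptote $h$ when $2k=n$, monotone with limit $0$ when $2k>n$---is the standard route to the trichotomy, and your identification of the $h=0$ solution via $e^\xi=\cosh(t-t_0)$ is right.

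The only places where you are genuinely sketchy are the ones you flag yourself: ruling out $h<0$ requires a short argument that a globally defined bounded trajectory cannot enter the region $F<0$ (immediate from $F=(1-\xi_t^2)^k\ge 0$) nor escape to $\xi=\pm\infty$ in finite time, and the $C^{2-n/k}$ regularity in case~(2c) needs the expansion of $\xi$ near the turning point converted back through \eqref{trans'}. None of this is a gap in the strategy.
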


We can parametrize the global singular radial solutions
to \eqref{1w} in a similar way as before: for each $ 0 < h $, subject to any further constraints depending on
$2k<$ or $=n$, as given in Theorem C,
let $\xi_{h}(t)$ denote the solution to  \eqref{1w} with its first integral equal to $h$
and such that $\xi_h(0)$ equals $\min_{\mathbb R} \xi_h(t)$.
We can now reformulate Theorem~\ref{main} with more explicit information as

\begin{thmm}
Let $w(t,\theta)$ be a smooth solution to \eqref{1w} on
$\{t>t_0\}\times \mathbb S^{n-1}$ in the $\Gamma^+_k$ class, where $n\ge 3$, $2\le k \le n$, and
$c>0$.
Then there exist $\alpha>0$, $h\ge 0$, $\tau$ and $C>0$  such that
\begin{equation}\label{maincon}
|w(t,\theta)-\xi_h(t+\tau)|\le C e^{-\alpha t}\quad \text{for $t>t_0+1$}.
\end{equation}
\end{thmm}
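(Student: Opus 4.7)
The plan is to deduce Theorem~1$''$ as a direct corollary of Theorem~\ref{main} combined with the classification of global radial $\Gamma^+_k$ solutions in Theorem~C. The substantive analytic content, asymptotic radiality with an exponential rate, is already packaged into Theorem~\ref{main}; what remains is to identify its abstract radial model $w^*(t)$ with an explicit member of the one-parameter family $\xi_h(\cdot+\tau)$.

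First, I would invoke Theorem~\ref{main} to produce a radial smooth solution $w^*(t)$ of \eqref{1w} on the entire cylinder $\mathbb R\times\mathbb S^{n-1}$ in the $\Gamma^+_k$ class, together with constants $\alpha>0$, $C>0$ such that $|w(t,\theta)-w^*(t)|\le Ce^{-\alpha t}$ for $t>t_0+1$. Next, by Theorem~C the quantity $h:=e^{(2k-n)w^*(t)}\bigl(1-(w^*_t(t))^2\bigr)^k-e^{-nw^*(t)}$ is a nonnegative constant along $w^*$ and, depending on the relation between $2k$ and $n$, lies in the admissible range described there. This isolates the parameter $h\ge 0$ appearing in the statement.

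I would then check that $w^*$ attains its minimum somewhere on $\mathbb R$, so that it can be aligned with $\xi_h$: in the $h=0$ case one has explicitly $w^*(t)=\ln\cosh(t-t_*)$; in the case $h>0$, $2k<n$ the function $w^*$ is periodic by Theorem~C; and in the cases $h>0$, $2k\ge n$ the asymptotic expansions in Theorem~C force $w^*(t)\to+\infty$ as $|t|\to\infty$. Let $t_*$ be a point where the minimum is attained, so that $w^*_t(t_*)=0$. Evaluating the first integral at $t_*$ gives $h=e^{(2k-n)w^*(t_*)}-e^{-nw^*(t_*)}$, and the $\Gamma^+_k$ sign condition at a minimum (where $w^*_{tt}\ge 0$) selects the correct root of this equation, so that $w^*(t_*)$ matches $\xi_h(0)$. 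Uniqueness for the autonomous second-order ODE \eqref{3} with initial data $(\xi_h(0),0)$ at $t=0$ then yields $w^*(t+t_*)=\xi_h(t)$, that is, $w^*(t)=\xi_h(t+\tau)$ with $\tau=-t_*$. Substituting into the bound from Theorem~\ref{main} produces \eqref{maincon}.

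The only delicate point I would expect is the root selection when the map $x\mapsto e^{(2k-n)x}-e^{-nx}$ is not monotone, i.e.\ in the subcase $2k<n$, but the $\Gamma^+_k$ sign condition at the minimum resolves it; the rest is bookkeeping that converts the abstract radial limit of Theorem~\ref{main} into the concrete parametric family $\{\xi_h\}$ of Theorem~C. Consequently, no genuine new obstacle arises here; all of the analysis lives in Theorem~\ref{main}.
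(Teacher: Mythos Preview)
Your proposal is correct and matches the paper's own framing: the paper explicitly presents Theorem~1$''$ as a ``reformulation'' of Theorem~\ref{main} using the classification in Theorem~C, so the reduction you describe is precisely what is intended, and the paper gives no separate proof. One minor simplification: your ``root selection'' worry in the $2k<n$ case dissolves once you note that $w^*$ and $\xi_h$ share the same first integral $h$, hence trace the same closed phase curve, so their minimum values automatically coincide and ODE uniqueness with data $(\xi_h(0),0)$ gives $w^*(\cdot+t_*)=\xi_h$ without any appeal to the $\Gamma^+_k$ sign condition.
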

As mentioned earlier, the $\alpha$ in the above theorem can be taken as
any number in $(0, 1)$, while the constant $C$ then depends on
$\alpha$ as well.

Using the transformation \eqref{trans'} and our knowledge of $\xi_h(t+\tau)$ as given in Theorem C,
we can also formulate the result in Theorem~\ref{main}$''$ in terms of the 
variable $u(x)$ defined on $B_R\setminus\{0\}$.
\begin{corollary}
Let $u(x)$ be a positive
smooth  solution to \eqref{1w'} on $B_R \setminus \{0\}$ 
in the $\Gamma^+_k$ class, where the constant
$c$ is normalized to be  $2^{-k}\binom{n}{k}$.  If
\begin{equation}\label{low}
\liminf_{x \to 0} |x|^{\frac{n-2}{2}} u(x) >0,
\end{equation}
then  $2k<n$, furthermore, there exist $\alpha>0$, $h^{*}\ge h\ge 0$, $\tau$ and $C>0$  such that
\begin{equation} 
u (x) = \left(1 + o(|x|^{\alpha})\right) 
|x|^{-\frac{n-2}{2}} e^{-\frac{n-2}{2}\xi_h(-\ln |x| +\tau)} 
\end{equation}
as  $x \to 0$;

If $2k>n$, or $2k<n$ and 
\begin{equation}\label{rem}
\liminf_{x \to 0} |x|^{\frac{n-2}{2}} u(x) =0,
\end{equation}
then $\lim_{x \to 0} u(x)$ exists and equals some $a>0$,
and there exist some $\alpha>0$ and $C>0$  such that
\begin{equation}\label{hol}
|u (x) - a| \le C |x|^{\alpha};
\end{equation}

If $2k=n$, then there exist some   $0\le h <1$ and  $\alpha>0$ such that
\[
|x|^{\frac{n-2}{2}(1-\sqrt{1-\sqrt[k]{h}})}u(x)
\]
extends to a $C^{\alpha}$ positive function over $B_R$.
\end{corollary}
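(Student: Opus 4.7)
The plan is to obtain the Corollary as a direct translation of Theorem~1$''$ through the change of variables \eqref{trans'}--\eqref{change}, followed by a case-by-case reading of Theorem~C. Theorem~1$''$ supplies $\alpha>0$, $h\ge 0$, $\tau$, and $C>0$ such that $|w(t,\theta)-\xi_h(t+\tau)|\le C e^{-\alpha t}$ for $t>t_0+1$. Setting
\[
u^{*}(|x|) := |x|^{-(n-2)/2} e^{-(n-2)\xi_h(-\ln|x|+\tau)/2},
\]
and using $u(x)=|x|^{-(n-2)/2}e^{-(n-2)w/2}$ together with $|e^{s}-1|\le C|s|$ on bounded intervals, one obtains $u(x)=u^{*}(|x|)\bigl(1+O(|x|^{\alpha})\bigr)$. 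After this step the corollary reduces to knowing the behaviour of $u^{*}(|x|)$ as $|x|\to 0$, which is exactly what Theorem~C provides.

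To identify which radial profile occurs, I examine $|x|^{(n-2)/2}u^{*}(|x|)=e^{-(n-2)\xi_h(-\ln|x|+\tau)/2}$ in each regime allowed by Theorem~C. In the regimes $(2k<n,\,h=0)$ and $(2k>n)$ the function $u^{*}(|x|)$ has a positive finite limit, so $|x|^{(n-2)/2}u^{*}(|x|)\to 0$. In the regime $2k=n$ one has $u^{*}(|x|)\sim c\,|x|^{-\frac{n-2}{2}(1-\sqrt{1-\sqrt[k]{h}})}$ with $h<1$, so again $|x|^{(n-2)/2}u^{*}(|x|)\to 0$. Only when $2k<n$ with $h\in(0,h^{*}]$ is $\xi_h$ periodic, keeping $|x|^{(n-2)/2}u^{*}(|x|)$ bounded between positive constants. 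Hence \eqref{low} singles out precisely the last regime, and the first displayed identity of the corollary is nothing but $u(x)=u^{*}(|x|)(1+O(|x|^{\alpha}))$ rewritten.

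For the other clauses, one plugs the corresponding Theorem~C asymptotic into $u=u^{*}(1+O(|x|^{\alpha}))$. When $2k>n$, Theorem~C(c) gives $u^{*}(|x|)=a+O(|x|^{2-n/k})$ with $a=\rho^{-(n-2)/2}$, yielding \eqref{hol} with exponent $\min(\alpha,2-n/k)$. When $2k<n$ and \eqref{rem} holds, the preceding classification forces $h=0$, so $u^{*}$ is the standard bubble with positive finite limit and the same H\"older estimate follows. When $2k=n$, set $\beta:=\frac{n-2}{2}(1-\sqrt{1-\sqrt[k]{h}})$; multiplying through by $|x|^{\beta}$ and invoking the next-order term in the asymptotic expansion of $\xi_h(t)$ at $t=\infty$ (obtained by linearising the radial ODE \eqref{3} about its cone limit), one concludes that $|x|^{\beta}u^{*}(|x|)$ has a H\"older positive extension to $\overline{B_R}$, a property inherited by $|x|^{\beta}u(x)$ through the correction $1+O(|x|^{\alpha})$.

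The main obstacle is this $2k=n$ step: Theorem~C(b) states only the leading-order cone behaviour, so upgrading it to a $C^{\alpha'}$ extension of $|x|^{\beta}u^{*}(|x|)$ requires one additional term in the asymptotic expansion of $\xi_h$ at $\infty$. Every other piece is straightforward bookkeeping once Theorem~1$''$ and Theorem~C are in hand.
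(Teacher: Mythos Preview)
Your approach is correct and matches the paper's intended derivation: the Corollary is presented precisely as a translation of Theorem~1$''$ through \eqref{trans'}--\eqref{change} combined with the classification in Theorem~C, and your case analysis (identifying which profile $\xi_h$ is compatible with \eqref{low} versus \eqref{rem}) is exactly the right mechanism.

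One small remark on the $2k=n$ step you flagged. Rather than linearising \eqref{3} about its cone limit, the cleaner route---and the one the paper uses in its Case~(d)---is to read the next-order term directly off the first integral $(1-\xi_h'^2)^k = h + e^{-n\xi_h}$. Since Theorem~C(2)(b) already gives $\xi_h(t)\to\infty$ at a positive linear rate, the term $e^{-n\xi_h}$ decays exponentially, so $\xi_h'(t)=\sqrt{1-\sqrt[k]{h}}+O(e^{-\alpha' t})$ for some $\alpha'>0$, and integrating yields $\xi_h(t)=\sqrt{1-\sqrt[k]{h}}\,t+\xi_0+O(e^{-\alpha' t})$. This immediately gives the $C^{\alpha}$ extension of $|x|^{\beta}u^{*}(|x|)$ you need, with no linearisation required.
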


\begin{remark}
In the case $2k>n$ Gursky and Viaclovsky 
\cite{GV}, YanYan Li \cite{Li06} had obtained \eqref{hol} 
earlier, with
$\alpha=2-\frac nk$. In the case $2k<n$,
M. Gonzalez 
\cite{MG} proved that if $u$ is a solution to \eqref{2} in  $B_R\setminus\{0\}$ in
the $\Gamma_k^+$ class such that $u^{4/(n-2)}|dx|^2$ has finite volume over
$B_R\setminus\{0\}$ , then $u$ is bounded in $B_R\setminus\{0\}$.
\end{remark}

As in \cite{KMPS}, we also obtain 
higher order expansions for solutions to \eqref{1w} in the case $2k\le n$.

\begin{theorem}\label{hodthm}
Let $w(t,\theta)$ be a solution to \eqref{1w} on
$\{t>t_0\}\times \mathbb S^{n-1}$ in the $\Gamma^+_k$ class, where  $n\ge 3$, $2\le k \le n/2$, and the constant $c$
is normalized to be  $2^{-k}\binom{n}{k}$, and let
 $w^*(t)= \xi_h(t+\tau)$ be the radial
solution to \eqref{1w} on $\mathbb R \times \mathbb S^{n-1}$ in the $\Gamma^+_k$ class
for which \eqref{maincon0} holds. Let $\{Y_j(\theta): j=0, 1, \cdots \}$ denote
 the set of normalized spherical harmonics, and $\rho$ be the infimum of the positive characteristic 
exponents defined through Floquet theory
to the linearized equation  of \eqref{1w} at $w^*(t)$ corresponding to higher order spherical 
harmonics $Y_j(\theta)$, $j>n$ --- see the paragraph before Lemma~\ref{la2} in Section 4 for more
detail. Then $\rho >1$, and 
 when $h>0$, there is a 
$$w_1(t,\theta)=\sum_{j=1}^n a_je^{-t-\tau}\left(1+\xi_h^{'} (t+\tau)\right)Y_j(\theta),$$
which is a solution to the linearized equation  of \eqref{1w} at $w^*(t)$, such that
\begin{equation}\label{goal}
|w(t,\theta)-w^*(t) - w_1(t, \theta)|\le C  e^{- \text{min}\{2,\rho\} t} \quad \text{for $t>t_0+1$},
\end{equation}
provided $\rho \ne 2$; when $\rho =2$, \eqref{goal} continues to hold if the right hand side
is modified into $Ct e^{-2t}$.
\end{theorem}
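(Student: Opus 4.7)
Writing $w = w^* + \phi$, Theorem~\ref{main} already gives $|\phi(t,\theta)| \le Ce^{-\alpha t}$ for some $\alpha>0$, and standard interior Schauder estimates on unit cylinders upgrade this to $C^{2,\gamma}$ decay of the same rate. Substituting into \eqref{1w} produces an equation of the form $L\phi = Q(\phi)$, where $L$ is the formal linearization of the $\sigma_k$ operator at $w^*$ and $Q(\phi)$ collects all terms at least quadratic in $\phi,\nabla\phi,\nabla^2\phi$; consequently $|Q(\phi)| \lesssim e^{-2\alpha t}$. Since $w^*$ is radial, $L$ commutes with rotations of $\mathbb{S}^{n-1}$, so expanding $\phi(t,\theta) = \sum_j \phi_j(t)Y_j(\theta)$ in normalized spherical harmonics diagonalizes $L$ into a sequence of second-order ODE operators $L_j$ with $T$-periodic coefficients, where $T$ is the period of $\xi_h$ (Theorem~C). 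Floquet theory then equips each $L_j$ with a monodromy matrix and a pair of characteristic exponents $\pm \mu_j$, with basis solutions of the form $e^{\pm \mu_j t}$ times a $T$-periodic factor.

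Next I identify the explicit Jacobi fields coming from conformal invariance. For $j=0$, translation in $t$ yields $\xi_h'(t+\tau)$ and parameter-variation yields $\partial_h \xi_h(t+\tau)$; both solve $L_0\psi = 0$, are bounded and non-decaying, so any non-decaying $Y_0$-component of $\phi$ can be removed by redefining the parameters $(\tau,h)$. For $j=1,\ldots,n$, Euclidean translation of the cone tip of $w^*$ in the $j$-th coordinate direction yields the explicit solution $e^{-t-\tau}\bigl(1+\xi_h'(t+\tau)\bigr)Y_j(\theta)$; its decay rate $e^{-t}$ pins the indicial roots of $L_j$ to $\pm 1$ on these modes. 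On modes $j>n$, the eigenvalues of $-\Delta_{\mathbb{S}^{n-1}}$ jump above $n-1$, and the minimal positive Floquet exponent $\rho$ over such $j$ satisfies $\rho > 1$ strictly.

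Given $\rho>1$, I decompose $\phi$ into its $Y_0$ projection (absorbed by a redefinition of $(\tau,h)$), its $j=1,\ldots,n$ projections (from which I subtract $w_1 = \sum_{j=1}^n a_j e^{-t-\tau}(1+\xi_h'(t+\tau))Y_j(\theta)$ with $a_j$ chosen to kill the slowest indicial mode of $L_j$), and a high-frequency remainder projecting onto $j>n$. Inverting each $L_j$ against $Q(\phi)$ by variation of parameters and using the Floquet decay on the homogeneous part gives estimates of the form $e^{-\min\{2\alpha,\rho\}t}$ on high modes and $e^{-\min\{2\alpha,2\}t}$ on low modes (the $2$ coming from the next indicial root beyond $1$, produced by the variation-of-parameters integral of $e^{-2\alpha t}$ against an $e^t$-factor). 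Iterating this improvement doubles the effective decay rate $\alpha$ at each step until it saturates at $\min(2,\rho)$; in the resonant case $\rho = 2$ the variation-of-parameters integral on a critical mode gains an extra factor of $t$, producing the announced $t e^{-2t}$ correction.

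The main obstacle is proving the strict inequality $\rho > 1$ for every $j > n$, which is a genuinely spectral question about the monodromy of $L_j$ over the periodic orbit $\xi_h$; I would attack it by explicit computation at the cylindrical endpoint $h = h^*$ (where the coefficients of $L_j$ become $t$-independent and the exponents reduce to a quadratic formula in the spherical-harmonic eigenvalue), combined with a perturbation argument in $h$ and monotonicity in $j$. Secondary technical difficulties include the treatment of the resonance $\rho = 2$ (handled by an $L^2$-orthogonality condition or by a direct integration with a logarithmic correction) and a careful verification that the nonlinear error $Q(\phi)$ arising from the fully nonlinear $\sigma_k$ structure with $k\ge 2$ retains the pointwise bound $|Q(\phi)| \lesssim \|\phi\|_{C^2}^2$ uniformly in $t$ throughout the bootstrap, which in turn relies on uniform ellipticity coming from the $\Gamma_k^+$ condition and the closeness of $w$ to $w^*$.
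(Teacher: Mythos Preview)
Your overall architecture is exactly the paper's: set $\phi = w - w^*$, linearize to $L\phi = Q(\phi)$ with quadratic remainder, decompose into spherical harmonics, identify the explicit Jacobi fields $\xi_h'$, $\partial_h\xi_h$ on mode $0$ and $e^{\mp t}(1\pm\xi_h')$ on modes $1,\dots,n$, invert each $L_j$ by variation of parameters, and iterate the decay until it saturates. This is precisely the content of Proposition~\ref{la4} and the three-step bootstrap in the paper's proof.

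The one substantive gap is your plan for the spectral bound $\rho>1$. Perturbing from the cylindrical endpoint $h=h^*$ gives the inequality only for $h$ near $h^*$, and you have not said anything that prevents $\rho(h)$ from touching $1$ at some intermediate $h\in(0,h^*)$; ``monotonicity in $j$'' does not help with this, and a continuation argument in $h$ would itself require a nondegeneracy input you have not supplied. The paper sidesteps the issue entirely: it conjugates $L_j$ by the explicit weight $V(t)=e^{(1-n/2k)\xi}(e^{-n\xi}+h)^{(k-1)/2k}$ to Schr\"odinger form $\psi''+E(t)\psi=0$ and then verifies by direct algebra (Lemma~\ref{la3}) that $E(t)<-2$ uniformly in $t$ and in $h\in(0,h^*]$ whenever $\lambda_j\ge 2n$, yielding $\rho>\sqrt{2}$ outright.

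Two smaller corrections. On the $j=0$ mode you do not get to ``redefine $(\tau,h)$'': those are already fixed by Theorem~\ref{main}, and since $\phi_0(t)\to 0$ the variation-of-parameters formula with basis $\{\xi_h',\partial_h\xi_h\}$ (the second growing only linearly) already forces $|\phi_0|\lesssim e^{-\beta t}$ directly. And your explanation of the ``$2$'' in $\min\{2,\rho\}$ is off: on modes $j=1,\dots,n$ the indicial roots are exactly $\pm 1$ and there is no further root. The $2$ appears because $w_1\sim e^{-t}$ sits inside $\phi$, so $Q(\phi)$ cannot decay faster than $e^{-2t}$, and the forcing in every variation-of-parameters integral saturates there.
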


Theorem~\ref{hodthm} requires some knowledge on the  spectrum of the linearized operator to \eqref{1w}.
We are able to provide the needed analysis, and will state them as Propositions~\ref{la1} and~\ref{la4} in section 4.
Such analysis will also be needed in constructing solutions to \eqref{1w} on
$\mathbb S^{n} \setminus \Lambda$, and in analysing the moduli space of solutions to \eqref{1w} on
$\mathbb S^{n} \setminus \Lambda$, when $\Lambda$ is a finite set.
Our knowledge of the spectrum of the linearized operator to \eqref{1w} immediately yields
Fredholm mapping properties of these operators on appropriately defined weighted spaces, as those in
\cite{MPU}, \cite{MS}, and \cite{KMPS}.
We will pursue these problems in a different paper.

It turns out that either of the approaches in \cite{CGS} and \cite{KMPS} can be adapted 
to prove the main part of  Theorom~\ref{main}. 
We will provide proofs along both lines.

The approach in \cite{CGS} first proves that the radial average of the solution is a good approximation
to the solution, and satisfies an ODE which is an approximation to the  ODE \eqref{cylrad} satisfied by a radial
solution to \eqref{cyl}; from this approximate ODE one proves that the radial average is approximated
by a (translated) radial solution to \eqref{cyl}. More specifically, \cite{CGS} first proves \eqref{ave}
for a positive solution to \eqref{sca} in the punctured ball $B_2(0)\setminus\{0\}$.

In terms of $U(t,\theta)=r^{\frac{n-2}{2}}u(r\theta)$, $t=-\ln r = - \ln |x|$, and
\[
\beta (t) := |\mathbb S^{n-1}|^{-1} \int_{\mathbb S^{n-1}} U(t,\theta)d\,\theta,
\]
\eqref{ave} is reformulated as
\begin{equation}\label{ave2}
|U(t,\theta)-\beta (t)| \le C \beta (t) e^{-t}.
\end{equation}

Using gradient estimates and \eqref{ave}, \cite{CGS} further deduces that
for some constant $C>0$
\[
|\grad (u(x) -  \bar u(|x|))| \le C  \bar u(|x|),
\]
which, in terms of $U(t,\theta)$, $t=-\ln r = - \ln |x|$, and $\beta (t)$,
is reformulated as
\begin{equation}\label{grad}
|\grad_{t, \theta} (U(t,\theta) - \beta (t))| \le C \beta (t) e^{-t}.
\end{equation}
It follows from \eqref{cyl}, \eqref{ave2} and a version of  \eqref{grad} for derivatives up to order $2$ that
\begin{equation}\label{7.14'}
\beta^{''}(t)- \frac{(n-2)^2}{4} \beta(t) + \frac{n(n-2)}{4} \beta^{\frac{n+2}{n-2}}(t) 
= O(\beta(t) e^{-t}).
\end{equation}
It is then routine to deduce from \eqref{7.14'} the following approximate first integral, which, up to a 
constant, is referred to
as (7.14) in \cite{CGS}: for some constant $D_{\infty}$,
\begin{equation}\label{7.14}
{\beta^{'}}^2(t)=\frac{(n-2)^2}{4} \left[\beta^2(t)-  \beta^{\frac{2n}{n-2}}(t)\right]+ D_{\infty}+ 
\left( \beta^2(t) + {\beta^{'}}^2(t) \right) O(e^{-t}),
\end{equation}
Since $ \beta(t)$ remains positive for all large $t$, \eqref{7.14} demands that
$0 \ge  D_{\infty} \ge D^*$, where
\[
D^*=-\frac{(n-2)^2}{4} \sup_{\beta \ge 0} 
\left[ \beta^2-  \beta^{\frac{2n}{n-2}}\right]
=- \frac{n-2}{2} \left( \frac{n-2}{n}\right)^{\frac n2}
\]
is determined so that for $0 \ge  D_{\infty} \ge D^*$, 
\[
\sup_{\beta\ge 0} \left(\frac{(n-2)^2}{4}\left[ \beta^2-  \beta^{\frac{2n}{n-2}}\right]+ D_{\infty}\right) \ge 0. 
\]
Then  \cite{CGS} indicates that when $0>  D_{\infty} \ge D^*$, $\beta(t)$ is asymptotic to a translated
solution $\psi (t)$ to \eqref{cylrad} whose first integral is the same as $D_{\infty}$. When $D_{\infty}=0$,
 \cite{CGS} gives a detailed argument  that $0$ is a removable singularity.

We will formulate and prove a general asymptotic approximation
result for solutions to certain ODEs which include the case for scalar curvature and $\sigma_k$ curvature cases.

Consider  a solution $\beta(t)$ to
\begin{equation}\label{pert}
\beta^{''}(t)= f(\beta'(t),\beta(t))+ e_1(t), \quad t\ge 0,
\end{equation}
where $f$ is  locally Lipschitz, and  $e_1(t)$ is considered as a perturbation term with
$e_1(t) \to 0$ as $t\to \infty$ at a sufficiently fast  rate to be specified later.
Suppose that $|\beta(t)|+|\beta'(t)|$ is bounded over $t\in [0,\infty)$. Then
by a compactness argument there exist
a sequence of $t_i \to \infty$ and 
 a solution $\psi(t)$ to
\begin{equation}\label{au}
\psi^{''} (t) = f(\psi'(t),\psi(t))
\end{equation}
which exists for all $t\in \mathbb R$ such that 
\begin{equation}\label{appro}
 \beta(t_i+\cdot)\to \psi
\quad  \text{ in } \ C^1_{loc}(-\infty, \infty)
 \text{  as $i\to  \infty$}.
\end{equation}
\begin{theorem}\label{odethm}
Suppose that,  for the $\beta(t)$,
  $\psi (t)$ and $\{t_i\}$
 above,  $\psi (t)$ is a periodic solution to \eqref{au}
with (minimal) period $T\ge0$. Thus for some finite $m\le M$,
\begin{equation}\label{bd}
\psi (\mathbb R)=[m,M].
\end{equation}
 We may do a time translation for $\psi (t)$ so that $\psi (0)=m,\, \psi'(0)=0$, then
the approximation property \eqref{appro} can be reformulated as,
for some $s$,
\begin{equation} \label{appro2}
\beta(t_i+\cdot))-\psi(-s+\cdot)\to 0,  \text{ in $C^1_{loc}
(-\infty, \infty)$,
as $i\to \infty$.}
\end{equation}
Suppose  that $\psi (t)$ has a first integral in the form of
\begin{equation}\label{H}
H(\psi'(t),\psi(t))=0, \quad \text{for some continuous function $H(x,y)$,}
\end{equation}
where $H$ satisfies the following non-degeneracy condition, depending on
\begin{enumerate}
\item[case (i).] ($\psi (t) \equiv m$ is a constant): there exist  some  $\epsilon_1>0$, $A>0$, $l>0$,
\begin{equation}\label{nond1}
H(x,y)\ge A\left(|x|^l + |y-m|^l\right), \text{ for any $(x,y)$ with $|x|+ |y-m|\le \epsilon_1$;}
\end{equation}
\item[case (ii).]($\psi (t)$ is non-constant): there exist  some  $\epsilon_1>0$, $A>0$ and  $l>0$,
\begin{equation}\label{nond}
|H(0,y)|=|H(0,y)-H(0,m)|\ge A|y-m|^l, \text{ for any $y$ with $|y-m|\le \epsilon_1$.}
\end{equation}
\end{enumerate}
Suppose also that $\beta(t)$  has $H$ as an approximate first integral
\begin{equation}\label{afi}
|H(\beta^{'}(t),\beta(t))| \le e_2(t), \quad \text{for $t\ge 0$,}
\end{equation}
where $e_2(t)\to 0$ as $t \to \infty$.  Without loss of generality, we may suppose that $e_2(t)$ is
monotone non-increasing in $t$. Finally suppose that
\begin{equation}\label{deca}
\int_0^{\infty} \left((e_2(t))^{1/l}+ \sup_{\tau \ge t}|e_1(\tau )| \right) dt < \infty.
\end{equation}
Then, there exist some $s_{\infty}$ and $C>0$ such that,
\begin{equation}\label{conc}
\begin{split}
&|\beta(t)-\psi(t-s_{\infty})|+|\beta^{'}(t)-\psi^{'}(t-s_{\infty})| \\
\le &C \int_{t-1}^{\infty} \left((e_2(t^{'}))^{1/l}+ \sup_{\tau \ge t^{'}}|e_1(\tau )| \right) dt^{'}
\to 0, \quad \text{ as $t \to \infty$.}
\end{split}
\end{equation}

\end{theorem}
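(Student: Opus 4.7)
My plan is to track $\beta$ by a phase coordinate $\tau(t)$ recording the time-shift by which $\psi(\cdot - \tau(t))$ best fits $\beta$, and to control this phase and the normal-to-orbit displacement using the approximate first integral $|H(\beta', \beta)| \le e_2(t)$ together with (\ref{nond1}) or (\ref{nond}). Case (i) is a warm-up: since $\psi \equiv m$, the phase is irrelevant and one only has to show $X(t) := (\beta'(t), \beta(t)) \to (0, m)$ quantitatively. The subsequential convergence (\ref{appro}) places $X(t_i) \in B_{\epsilon_1/2}(0, m)$ for large $i$; then $|H(X(t))| \le e_2(t) \le e_2(t_i)$ combined with (\ref{nond1}) traps $X(t)$ in $B_{\epsilon_1}(0, m)$ for all later $t$---if $X$ exited $B_{\epsilon_1}$ at some $t^*$, (\ref{nond1}) would give $|H(X(t^*))| \ge A \epsilon_1^l > e_2(t^*)$, a contradiction. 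Hence $|\beta'(t)|^l + |\beta(t) - m|^l \le A^{-1} e_2(t)$; monotonicity of $e_2$ upgrades the pointwise bound $C e_2(t)^{1/l}$ to $C\int_{t-1}^{t} e_2(u)^{1/l}\,du$, yielding (\ref{conc}) for any $s_\infty$.

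For case (ii), let $\Gamma := \{\Psi(s) := (\psi'(s), \psi(s)) : s \in \mathbb{R}\}$ be the periodic orbit of the 2D system $X' = F(X)$ with $F(x,y) := (f(x,y), x)$. In a tubular neighborhood of $\Gamma$, set up a smooth retraction $\pi$ onto $\Gamma$, lifted so that the phase is continuous in $t$, and define $\tau(t) := t - \pi(X(t))$ and $r(t) := \mathrm{dist}(X(t), \Gamma)$. A trapping argument as in case (i), now using that $|H|$ is bounded below on the boundary of the tubular neighborhood, keeps $X(t)$ in this neighborhood for all large $t$. The approximate first integral then yields the normal-displacement bound $|r(t)| \le C e_2(t)^{1/l}$: away from turning points of $\Gamma$, $|\nabla H|$ is bounded below, giving the sharper $|r| \le C e_2$; near the turning point $(0, m)$, where $\nabla H$ vanishes and the orbit tangent is horizontal, (\ref{nond}) controls the leading ($l$-th order) vertical Taylor coefficient of $H$ at $\Gamma$, producing the $1/l$-power bound.

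For the phase, differentiate: $\tau'(t) = 1 - \nabla \pi(X(t)) \cdot X'(t)$. Since $X' = F(X) + (e_1(t), 0)$ and $\nabla \pi \cdot F \equiv 1$ on $\Gamma$ (because $\psi$ parametrizes $\Gamma$ at unit phase speed), first-order expansion yields $|\tau'(t)| \le C|r(t)| + C|e_1(t)| \le C e_2(t)^{1/l} + C|e_1(t)|$. Assumption (\ref{deca}) renders $\tau'$ integrable at infinity, so $\tau(t) \to \tau_\infty$ with $|\tau(t) - \tau_\infty| \le C \int_t^{\infty} \bigl(e_2(u)^{1/l} + \sup_{v \ge u}|e_1(v)|\bigr)\,du$. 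Decomposing $\beta(t) - \psi(t - \tau_\infty) = [\psi(t - \tau(t)) - \psi(t - \tau_\infty)] + [\beta(t) - \psi(t - \tau(t))]$, bounding the first bracket via Lipschitzness of $\psi$ and the second by $|r(t)|$, and folding the pointwise $C e_2(t)^{1/l}$ into the integral from $t - 1$ by monotonicity of $e_2$, yields (\ref{conc}) for $\beta$; the argument for $\beta'$ is parallel. The main technical obstacle is the normal-displacement bound through turning points, where $H$ degenerates in the direction normal to $\Gamma$---this is precisely why the $e_2^{1/l}$ exponent appears in (\ref{deca}). A secondary subtlety, the trapping argument keeping $X(t)$ in the tubular neighborhood for all large $t$, follows from monotonicity of $e_2$ and the fact that $|H|$ is bounded below outside the neighborhood.
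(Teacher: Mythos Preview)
Your case (i) is fine and matches the paper. The gap is in case (ii): you assume far more about $H$ than the theorem gives you. The hypotheses say only that $H$ is \emph{continuous} and satisfies the single one-sided nondegeneracy bound \eqref{nond} at the point $(0,m)$. You invoke ``$|\nabla H|$ bounded below away from turning points'' and ``the $l$-th order vertical Taylor coefficient of $H$'', neither of which is available; and even granting smoothness, there is no hypothesis preventing $H$ from vanishing identically in a neighborhood of some other point of $\Gamma$, in which case $|H(X(t))|\le e_2(t)$ says nothing about $r(t)$ there. So the crucial estimate $|r(t)|\le C e_2(t)^{1/l}$ for \emph{all} large $t$, on which your phase-ODE argument $|\tau'(t)|\le C r(t)+C|e_1(t)|$ rests, is unjustified. (The same objection applies near the other turning point $(0,M)$, where no nondegeneracy is assumed at all.)

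The paper's proof sidesteps this by never asking $H$ to do work except at the one place the hypothesis covers. It runs a \emph{discrete} iteration: starting from \eqref{appro2}, it uses $|\beta''|\ge a/2$ near $(0,m)$ (a consequence of $f(0,m)\ne 0$) to locate a nearby critical time $\tau_0$ with $\beta'(\tau_0)=0$ and $|\beta(\tau_0)-m|$ small; \emph{only then} does \eqref{nond} apply, yielding $|\beta(\tau_0)-m|\le (e_2(\tau_0)/A)^{1/l}$. Between $\tau_0$ and roughly $\tau_0+T$ the control comes not from $H$ but from a Gronwall/linearization estimate comparing $\beta(\tau_0+\cdot)$ with $\psi(\cdot)$, which uses only the ODE and the perturbation $e_1$. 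This produces $\tau_1=\tau_0+T+\delta_1$ with $\beta'(\tau_1)=0$ and $|\delta_1|$ bounded by the error terms, and one iterates. The phase $s_j=\tau_j-jT$ is then a Cauchy sequence by \eqref{deca}. If you want to salvage the continuous phase-tracking idea, you would need to replace your global $r(t)$ bound by this Gronwall propagation between visits to a neighborhood of $(0,m)$.
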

In the case of \eqref{sca}, we can take 
\[
H(x,y)=x^2+\frac{(n-2)^2}{4}\left[ y^{\frac{2n}{n-2}}-y^2\right]-D_{\infty},
\]
according to \eqref{7.14}. Then $\beta (t)$ and $H$ satisfy the conditions in Theorem~\ref{odethm}.
We will indicate how Theorem~\ref{odethm} can be applied to prove Theorem~\ref{main}, after we 
provide more background information on solutions to \eqref{1w} and \eqref{1w'}.

Some comments on  Theorem~\ref{odethm}  are appropriate here.
\begin{remark}{}
\begin{enumerate}
\item[(a).] 
 \eqref{nond} would be satisfied with $l=1$ if  $H_y(0,m)\neq 0$, for instance.
Assumptions \eqref{H}, \eqref{nond}, and \eqref{afi} are used only near $(0,m)$, and need not be
posed near the minimum $m$ of $\psi (t)$: the argument would go through if they are posed
near any critical value of $\psi (t)$.
\item[(b).] Our proof  gives an exponential decay rate for 
$|\beta(t)-\psi(t-s_{\infty})|+|\beta^{'}-\psi^{'}(t-s_{\infty})|$ 
when $|e_1(t)|$ and $e_2(t)$ have exponential decay rates.
\end{enumerate}
\end{remark}

Here is a brief description of our plan for the remaining part of the paper.
We will first summarize some needed  preliminary properties for solutions to \eqref{1w} 
 and \eqref{1w'} in section 2,
then provide a proof for Theorem~\ref{main} in section 3, 
using Theorem~\ref{odethm} and several other ingredients, the proof
for which we supply in this section. In section 4, we provide the analysis for the linearized operator
for \eqref{1w} at entire radial solutions, and use them to provide an alternative proof for
Theorem~\ref{main} along the approach of \cite{KMPS}. We will
also provide a proof for Theorem~\ref{hodthm} here. Finally in section 5 we provide a proof for Theorem~\ref{odethm}.

\section{Several  preliminary properties for solutions to \eqref{1}}

To adapt either of the approaches in \cite{CGS} or \cite{KMPS} to our situation, we will need several key properties
of solutions  derived in \cite{Li06}.
The following is a special case of Theorem 1.2 in \cite{Li06}.
\begin{thmD}\label{Li1.2}(\cite{Li06})
 Let $U(t,\theta)$ be any positive solution to \eqref{1} defined on the entire
cylinder $\mathbb R \times \mathbb S^{n-1}$. Suppose that $U^{\frac{4}{n-2}}(t,\theta) (dt^2+d\theta^2)$
is in the $\Gamma_k^+$ class, and
$$
u(x) = |x|^{- \frac{n-2}{2}} U(-\ln |x|, \frac{x}{|x|})
$$
can not be  extended as a $C^2$ positive function near $0$, then
$U$ is independent of $\theta$.
\end{thmD}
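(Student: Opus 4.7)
The plan is to apply the method of moving spheres, adapted by YanYan Li to fully nonlinear conformally invariant equations in the $\Gamma_k^+$ class. The three main ingredients are: the conformal invariance of \eqref{1w'}, which ensures that Kelvin transforms of admissible solutions remain admissible solutions of the same equation; the strong comparison principle that is available because the equation is elliptic in the interior of the $\Gamma_k^+$ cone; and the hypothesis that $u(x)=|x|^{-(n-2)/2}U(-\ln|x|,x/|x|)$ cannot be extended as a $C^2$ positive function across $0$.

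Concretely, for $x_0\in\mathbb R^n\setminus\{0\}$ and $\lambda>0$, consider the Kelvin transform
\[
u_{x_0,\lambda}(x)=\Bigl(\tfrac{\lambda}{|x-x_0|}\Bigr)^{n-2}u\Bigl(x_0+\tfrac{\lambda^2(x-x_0)}{|x-x_0|^2}\Bigr),
\]
which is itself an admissible solution of \eqref{1w'} on the image of its domain. Define the critical radius $\bar\lambda(x_0)$ to be the supremum of those $\mu>0$ for which $u_{x_0,\lambda}\le u$ on the set $\{x:|x-x_0|\ge\lambda,\;x\ne0,\;x_0+\lambda^2(x-x_0)/|x-x_0|^2\ne0\}$ for every $0<\lambda<\mu$. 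A standard start-up argument, exploiting the control $u(x)\lesssim|x|^{-(n-2)/2}$ near $0$ together with the behavior at infinity coming from the asymptotic radial symmetry of \cite{Li06}, shows that $\bar\lambda(x_0)>0$. If $\bar\lambda(x_0)<\infty$ for some $x_0\ne0$, then $u$ and $u_{x_0,\bar\lambda(x_0)}$ would touch somewhere in the exterior of the sphere of radius $\bar\lambda(x_0)$; the strong comparison principle in the $\Gamma_k^+$ class applied to these two admissible solutions of the same equation would force $u\equiv u_{x_0,\bar\lambda(x_0)}$ on their common domain. But the Kelvin map sends the preimage of $0$ to a regular point of $u_{x_0,\bar\lambda(x_0)}$, so this identification would extend $u$ as a positive $C^2$ function across $0$, contradicting the non-removable singularity hypothesis. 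Hence $\bar\lambda(x_0)=\infty$ for every $x_0\ne0$. A standard calculus lemma for families of Kelvin inequalities then implies that either $u$ is a standard bubble (excluded by the non-removable singularity at $0$) or $u$ is purely radial about $0$. Passing back through \eqref{change}, this gives $U(t,\theta)=U(t)$, as desired.

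The main obstacle will be implementing the strong comparison principle in the critical step above: equation \eqref{1w'} is fully nonlinear and degenerates on $\partial\Gamma_k^+$, so strict ellipticity is available only in the interior of the cone; one needs to verify that admissible solutions remain strictly inside $\Gamma_k^+$ along the moving sphere and then linearize appropriately to propagate the touching identity. A secondary technical point is controlling the moving sphere when $x_0$ is close to $0$, where the Kelvin image of the singularity can collide with the singularity itself and the comparison domain degenerates; here one uses the asymptotic radial symmetry near $0$ from \cite{Li06} to pass to the limit. Apart from these analytic subtleties, the scheme parallels the moving sphere approach developed by YanYan Li for conformally invariant fully nonlinear equations, now tailored to the $\Gamma_k^+$-elliptic setting on the punctured space.
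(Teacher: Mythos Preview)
The present paper does not prove Theorem~D at all: it is quoted verbatim as a special case of Theorem~1.2 in \cite{Li06} and used as input for the asymptotic analysis in Sections~3 and~4. So there is no proof here to compare your proposal against.

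That said, your outline is indeed the moving-sphere scheme developed in \cite{Li06}, and the overall architecture (conformal invariance of \eqref{1w'}, strong comparison in the interior of $\Gamma_k^+$, the dichotomy at the critical radius, and the calculus lemma turning $\bar\lambda(x_0)=\infty$ for all $x_0$ into radial symmetry) is the right one. One caution: you invoke ``the behavior at infinity coming from the asymptotic radial symmetry of \cite{Li06}'' in your start-up step, but in \cite{Li06} the global Liouville-type result (your Theorem~D) is logically prior to---not a consequence of---the local asymptotic symmetry (Theorem~E here), so you should check that your start-up does not become circular. In \cite{Li06} the start-up and the continuation through the singularity are handled directly from the superharmonicity/admissibility and the two-sided bound on $|x|^{(n-2)/2}u(x)$, without appealing to asymptotic symmetry.
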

\begin{remark} Theorem D is an analogue of Theorem A.
In the setting of Theorem D, it follows from Theorem 1.3 in \cite{LL03} that if $u(x)$
can be extended as a $C^2$ positive function near $0$,
 then $u(x)$ is a 
constant multiple of
$\displaystyle{
( \frac a{ 1+a^2|x-\bar x|^2 })^{ \frac {n-2}2 }
}
$
in $\mathbb R^n$ for some
$\bar x\in \mathbb R^n$ and
$a>0$.
\end{remark}
When a solution $u$ in Theorem D can not be extended as a $C^2$ positive function near $0$, 
we refer to the corresponding $U(t,\theta)=U(t)$ as a global singular positive solution to \eqref{1}.
Using   Theorem C above,
 when $2k<n$, $U(t)$ is a periodic
solution  of \eqref{1}  with $0< U(t) \le 1$ for all
$t \in \mathbb R$ and the first integral $h > 0$.

Another needed estimate, generalizing  estimate \eqref{ave}  from solutions to
\eqref{sca} to solutions to \eqref{1},  is drawn from Theorems 1.1$'$ and 1.3 of \cite{Li06}:
\begin{thmE}(\cite{Li06})
Suppose that $u\in C^2(B_2\setminus\{0\})$ is a positive solution to \eqref{1w'}.
Then
\begin{equation}\label{sup}
\limsup_{x \to 0} |x|^{\frac{n-2}{2}} u(x) < \infty;
\end{equation}
and there exists some constant $C>0$ such that
\begin{equation}\label{ave1}
|u(x) - \bar u(|x|)| \le C|x| \bar u (|x|),
\end{equation}
for $0<|x|\le 1$, where 
\[
 \bar u(|x|) = \frac{1}{|\partial B_{|x|}(0)|} \int_{\partial B_{|x|}(0)} u(y) d\sigma (y)
\]
is the spherical average of $u(x)$ over $\partial B_{|x|}(0)$.
\end{thmE}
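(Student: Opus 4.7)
The plan is to use the method of moving spheres adapted to the fully nonlinear conformally invariant equation \eqref{1w'}, which is the approach carried out in \cite{Li06}. The starting observation is that \eqref{1w'} is invariant under the Kelvin transform: for $x_0 \in \mathbb R^n$ and $\lambda > 0$, the function
$$u_{x_0,\lambda}(x) := \left(\frac{\lambda}{|x-x_0|}\right)^{n-2} u\!\left(x_0 + \frac{\lambda^2(x-x_0)}{|x-x_0|^2}\right)$$
is another positive solution of \eqref{1w'} with the same constant $c$ on the image of the punctured ball, and its associated Schouten tensor remains in the $\Gamma^+_k$ class. Thus comparison arguments between $u$ and $u_{x_0,\lambda}$ are compatible with the ellipticity afforded by the $\Gamma^+_k$ condition.

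The central step is to show that for each $x_0$ near (but not equal to) $0$, there exists a critical $\bar\lambda(x_0)>0$ such that $u_{x_0,\lambda}(x) \le u(x)$ for all admissible $x$ whenever $0<\lambda\le \bar\lambda(x_0)$. For $\lambda$ small this is immediate from the smoothness of $u$ on the punctured ball. That $\bar\lambda(x_0)$ is positive, and that one can then control its size from below, rests on a maximum principle for $u - u_{x_0,\lambda}$: using the concavity of $\sigma_k^{1/k}$ on $\Gamma^+_k$, the difference satisfies a linear elliptic inequality to which the strong maximum principle and the Hopf lemma apply, preventing the first contact from occurring at interior points or on $\partial B_\lambda(x_0)$.

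Given this moving sphere inequality, both conclusions of Theorem E follow. For \eqref{sup}, one applies the construction centered at $x_0=0$ and exploits the isolated singularity of $u$ at $0$ to start the process with $\lambda$ as large as possible, obtaining $u_{0,\lambda}(x) \le u(x)$ for $|x| \ge \lambda$; evaluating this along $|x|=\lambda$ and examining the decay structure of $u_{0,\lambda}$ yields the uniform bound $|x|^{(n-2)/2}u(x) \le C$. For \eqref{ave1}, choose $x_0 = t\theta$ with $t$ comparable to $|x|$ and combine the previous estimate with the scaling observation that $\bar\lambda(x_0)$ is then comparable to $|x_0|$; then $u_{x_0,\lambda}\le u$, applied to pairs of points on $\partial B_{|x|}(0)$ obtained by reflection through $\partial B_\lambda(x_0)$, yields an oscillation bound of the form $|u(x)-u(y)|\le C|x|\,\bar u(|x|)$ for any two points on the same sphere. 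Averaging over $\theta$ gives \eqref{ave1}.

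The hard part is the moving-sphere maximum principle. Unlike the scalar $k=1$ case of \cite{CGS}, where this reduces to a comparison of Laplacians, for $k\ge 2$ one must extract a linear elliptic inequality from the fully nonlinear operator $\sigma_k$ acting on a matrix containing the quadratic gradient terms $n\,\nabla u\otimes \nabla u - |\nabla u|^2\,\text{Id}$ appearing in \eqref{1w'}; concavity of $\sigma_k^{1/k}$ on $\Gamma^+_k$ is what allows the linearization to be produced in a form with favorable sign. Equally important, to start the moving sphere globally near $0$ one needs an a priori bound on $u$ that cannot itself presume \eqref{sup}; this is obtained by a blow-up/contradiction argument relying on Theorem D (and Theorem 1.3 of \cite{LL03}): if the sup were to diverge, a suitable rescaling would produce a global positive solution on $\mathbb R^n\setminus\{0\}$ which Theorem~D forces to be radial, and the Liouville-type classification then rules out the configuration constructed by the blow-up sequence, closing the argument.
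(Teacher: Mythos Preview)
The paper does not prove Theorem~E at all: it is quoted verbatim as a result of \cite{Li06} (Theorems~1.1$'$ and~1.3 there) and used as an input. So there is no ``paper's own proof'' to compare your proposal against; your sketch is really an outline of what happens in \cite{Li06}, not of anything done in the present paper.

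That said, a couple of points about your outline on its own terms. The overall architecture --- conformal invariance under Kelvin transform, moving spheres, a maximum principle for $u-u_{x_0,\lambda}$ extracted from the concavity of $\sigma_k^{1/k}$ on $\Gamma_k^+$, and the oscillation bound on spheres leading to \eqref{ave1} --- is the right one and matches \cite{Li06}. But your paragraph on \eqref{sup} is muddled: centering the sphere at $x_0=0$ and writing $u_{0,\lambda}\le u$ for $|x|\ge\lambda$ gives no information on $|x|=\lambda$ (the reflection fixes that sphere, so $u_{0,\lambda}=u$ there), and the singularity at $0$ obstructs starting the moving sphere from the center itself. The bound \eqref{sup} in \cite{Li06} is obtained the way you indicate only in your final paragraph: by a blow-up/compactness argument combined with the Liouville-type classification (Theorem~D here and \cite{LL03}), not by a direct moving-sphere inequality centered at the origin. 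You should drop the earlier description and keep only the blow-up route for \eqref{sup}.
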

As in the previous section, in terms of 
 $t=-\ln r=- \ln |x|$, 
$$U(t,\theta)=r^{\frac{n-2}{2}}u(r\theta)= e^{- \frac{n-2}{2} w(t,\theta)},$$
\[
\beta (t) := |\mathbb S^{n-1}|^{-1} \int_{\mathbb S^{n-1}} U(t,\theta)d\,\theta,
\]
and the spherical average 
\begin{equation}\label{gam}
\gamma (t) := |\mathbb S^{n-1}|^{-1} \int_{\mathbb S^{n-1}}  w(t,\theta) d\,\theta,
\end{equation}
of $ w(t,\theta)$, \eqref{sup} is reformulated as
\begin{equation}\label{sup2}
U(t,\theta) \le C \quad \text{ and } \quad e^{-2w(t,\theta)} \le C.
\end{equation}
We  also derive from \eqref{ave1} that
\begin{equation}\label{ave22}
|U(t,\theta)-\beta (t)| \le C \beta (t) e^{-t},
\end{equation}
and 
\begin{equation}\label{ave3}
|\widehat w(t,\theta)| := |w(t,\theta) - \gamma (t)| \le \tilde C e^{-t}.
\end{equation}
\eqref{ave22} is simply a reformulation of \eqref{ave1} in terms of $U(t,\theta)$ and $\beta (t)$.
In terms of  $w(t,\theta)$, \eqref{ave22} becomes
\[
|e^{-\frac{n-2}{2} w(t,\theta)- \ln  \beta (t)} - 1| \le C e^{-t},
\]
from which it follows that, for some $ \tilde C >0$, 
\begin{equation}\label{est1}
| w(t,\theta)+\frac{2}{n-2} \ln  \beta (t)| \le \tilde C  e^{-t}.
\end{equation}
Integrating over $\theta \in \mathbb S^{n-1}$,
we obtain
\begin{equation}\label{est2}
| \gamma (t) +\frac{2}{n-2} \ln  \beta (t)| \le \tilde C  e^{-t}.
\end{equation}
\eqref{est1} and \eqref{est2} imply \eqref{ave3}.

We also have a counterpart to \eqref{grad} for positive singular solutions $u(x)$ in 
the $\Gamma^+_k$ class to  \eqref{1w'} on $B_R(0)\setminus \{0\}$.
\begin{proposition}
Let $u(x)$ be a  positive singular solution to  \eqref{1w'} on $B_2(0)\setminus \{0\}$
in the $\Gamma^+_k$ class,  $U(t,\theta)$, $ \beta (t)$, $w(t,\theta)$, and $\gamma (t)$ be defined above.
Then for any $0<\delta$ small, there exists a constant $C>0$ depending on $\delta$ such
that
\begin{equation}\label{grad2}
|\grad_{t, \theta}^{l} (U(t,\theta) - \beta (t))| \le C \beta (t) e^{-(1-\delta) t}, 
\quad \text{for all $t\ge 0$ and $1\le l \le 2$,}
\end{equation}
and
\begin{equation}\label{grad3}
|\grad_{t, \theta}^{l} ( w(t,\theta) - \gamma (t))| \le  C e^{-(1-\delta) t}\quad \text{for all $t\ge 0$ and $1\le l \le 2$.}
\end{equation}
\end{proposition}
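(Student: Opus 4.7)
The plan is to combine the $C^0$ decay estimates \eqref{ave22} and \eqref{ave3} with uniform interior regularity for the fully nonlinear $\sigma_k$ equation on unit cylinders, and then to interpolate.

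Fix a large $t_0$ and let $Q_{t_0}:=[t_0-1,t_0+1]\times\mathbb S^{n-1}$. Set $\tilde U(s,\theta):=U(t_0+s,\theta)/\beta(t_0)$. The non-removable singularity hypothesis, together with Theorems~D and~E and the estimate \eqref{ave22}, keeps $\beta(t)$ in a fixed compact subset of $(0,\infty)$ on any unit interval, so $\tilde U$ lies in a uniform range $[1/C,C]$ on $Q_{t_0}$ for all sufficiently large $t_0$. Because \eqref{1w'} is scale-covariant under $U\mapsto\lambda U$ (changing only the value of the constant on the right, which remains bounded), $\tilde U$ satisfies a $\sigma_k$ equation of the same type, still elliptic in the $\Gamma^+_k$ cone with uniform ellipticity bounds. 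Interior Evans--Krylov regularity followed by a standard Schauder bootstrap then yields $\|\tilde U\|_{C^m}\le C_m$ on the half cylinder $[t_0-\tfrac12,t_0+\tfrac12]\times\mathbb S^{n-1}$, with $C_m$ independent of $t_0$ for every $m$. The spherical average $\tilde\beta(s)=\beta(t_0+s)/\beta(t_0)$ inherits the same $C^m$ bound, hence so does $\tilde\chi:=\tilde U-\tilde\beta$.

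On the other hand, \eqref{ave22} directly gives $\|\tilde\chi\|_{C^0(Q_{t_0})}\le Ce^{-t_0}$. The classical interpolation inequality
\[
\|\tilde\chi\|_{C^l}\le C\,\|\tilde\chi\|_{C^0}^{1-l/m}\,\|\tilde\chi\|_{C^m}^{l/m}
\]
therefore produces $\|\tilde\chi\|_{C^l}\le Ce^{-(1-l/m)t_0}$ for $l=1,2$. Given $\delta>0$, choose $m$ so large that $2/m<\delta$; scaling back to $\chi=U-\beta$, which multiplies both sides by $\beta(t_0)$, produces \eqref{grad2}. The estimate \eqref{grad3} is obtained by the parallel argument applied to $w(t_0+s,\theta)-\gamma(t_0)$ in place of $\tilde U$, using that \eqref{1w} is fully nonlinear elliptic in $\Gamma^+_k$ and that $w-\gamma(t_0)$ stays uniformly bounded on $Q_{t_0}$ thanks to \eqref{ave3} and the bounded oscillation of $\gamma$ on unit intervals.

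The main technical point is verifying that the rescaled $\sigma_k$ equation admits $C^m$ interior estimates with constants independent of $t_0$; this reduces to the uniform two-sided positive bounds on $\tilde U$ and the uniform bound on the rescaled right-hand side, both already supplied by the construction, together with the known interior regularity theory for fully nonlinear concave equations with smooth data. The loss of $\delta>0$ in the exponent is intrinsic to interpolation with only finitely many derivatives, and is consistent with the sharp range $\alpha\in(0,1)$ recorded in the remark following Theorem~\ref{main}; a sharper decay, if needed, would have to come from a direct analysis of the linearized operator at a radial profile rather than from interpolation.
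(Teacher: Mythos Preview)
Your approach---uniform interior $C^m$ estimates for the $\sigma_k$ equation followed by interpolation against the $C^0$ decay \eqref{ave22}, \eqref{ave3}---is exactly the paper's. The paper's write-up is shorter because it works directly with $w$ rather than $U$: the local derivative estimates of Guan--Wang \cite{GW1} give $|\nabla^l_{t,\theta} w|\le B$ depending only on the one-sided bound \eqref{sup2}, so no rescaling by $\beta(t_0)$ is needed and the non-removability hypothesis you invoke plays no role. In fact \eqref{grad3} is used in Section~3 also in the case $h=0$, where $\beta(t)\to 0$, so the global lower bound on $\beta$ that you extract from Theorems~D and~E is not available in general; what your rescaling actually requires, and what the Guan--Wang $C^1$ estimate already supplies via $|w_t|\le B$, is only that $\beta(t)/\beta(t_0)$ stays in a fixed compact subset of $(0,\infty)$ on each unit $t$-interval.
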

We now provide an argument for \eqref{grad3}. First, \eqref{sup2} and the gradient estimates for solutions to  \eqref{1w},
see \cite{GW1}, give a bound $B>0$ depending on $l>0$ and $C$ in \eqref{sup2}, such that
\begin{equation}\label{grad-pre}
|\grad^l_{t,\theta} w(t,\theta)| \le B.
\end{equation}
This obviously leads to 
\[
|\grad^l_{t,\theta} \gamma (t) | \le B,
\]
which, together with \eqref{grad-pre}, implies that
\[
|\grad^l_{t,\theta}  \left( w(t,\theta) - \gamma (t) \right)| \le 2B.
\]
This estimate, together with \eqref{ave3} and interpolation, proves \eqref{grad3}.

\section{First proof of Theorem~\ref{main}: exploiting the ODE satisfied by the radial average}

Let $u(x)$ be a positive
solution to \eqref{1w'} on $B_R \setminus \{0\}$ in the $\Gamma^+_k$ class, where the constant
$c$ is normalized to be  $2^{-k}\binom{n}{k}$, and $\gamma (t)$ is defined as in \eqref{gam}. 
We first make
\begin{claim}
\begin{equation}\label{RE'}
\left\{2(1- \gamma_t^2)^{k-1}\left[ \frac{k}{n} \gamma_{tt} + \frac{n-2k}{2n}(1-\gamma_t^2)\right] 
+ \eta_1(t)\right\}e^{2k\gamma}
=1+ \eta_2(t),
\end{equation}
and
\begin{equation}\label{FI'}
e^{(2k-n) \gamma} \left\{(1-\gamma_t^2)^{k}+\eta_3 (t)\right\}- e^{-n\gamma} \left\{1 +\eta_4 (t)\right\}
= h,
\end{equation}
for some constant $h$, where $\eta_i(t)$, for $i=1,\cdots, 4$, have the decay rate
$\eta_i(t) = O(e^{-2(1-\delta)t})$ as $t\to \infty$, and $\delta>0$ can be made as small as one needs, as in 
\eqref{grad3}.
\end{claim}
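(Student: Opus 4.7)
The plan is to decompose $w(t,\theta)=\gamma(t)+\widehat w(t,\theta)$, with $\widehat w$ having zero spherical mean. Combining \eqref{ave3} and \eqref{grad3} yields
$$|\nabla^{l}_{t,\theta}\widehat w(t,\theta)|\le Ce^{-(1-\delta)t},\qquad l=0,1,2.$$
I would then apply \eqref{1w} to $w$ and spherically average about the \emph{radial} reference point $\gamma(t)$: the linear-in-$\widehat w$ part vanishes on average, while the quadratic remainder yields errors of size $O(e^{-2(1-\delta)t})$.

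\textbf{Derivation of \eqref{RE'}.} I would substitute $w=\gamma+\widehat w$ into \eqref{1w} with the cylindrical $g_{0}$, so that both sides become polynomial in $w$ and $\nabla^{l}w$ modulo the factor $e^{-2kw}$. Taylor-expanding in $\widehat w$ about the radial configuration produces
$$\mathcal F(\gamma)+L[\gamma]\widehat w+Q(\gamma,\widehat w)=ce^{-2k\gamma}\bigl(1-2k\widehat w+R(\widehat w)\bigr),$$
where $\mathcal F(\gamma)$ is precisely the left-hand side of the radial equation \eqref{3}, $L[\gamma]$ is a second-order linear operator with $\theta$-independent coefficients depending on $\gamma,\gamma_{t},\gamma_{tt}$, and $Q,R$ are at least quadratic in $\widehat w$ and its derivatives. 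Because the reference $\gamma$ is radial, $L[\gamma]$ commutes with the rotation group of $\mathbb S^{n-1}$ and therefore preserves the zero-mean subspace, so $\overline{L[\gamma]\widehat w}=0$; likewise $\overline{\widehat w}=0$. The quadratic pieces satisfy
$$|Q(\gamma,\widehat w)|+ce^{-2k\gamma}|R(\widehat w)|\le C\bigl(|\widehat w|+|\nabla\widehat w|+|\nabla^{2}\widehat w|\bigr)^{2}\le C'e^{-2(1-\delta)t}.$$
Averaging over $\theta$ and rearranging produces exactly the form \eqref{RE'}, with $\eta_{1},\eta_{2}$ absorbing the averaged quadratic remainders.

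\textbf{Derivation of \eqref{FI'}.} The radial ODE \eqref{3} admits the first integral $F(\xi):=e^{(2k-n)\xi}(1-\xi_{t}^{2})^{k}-e^{-n\xi}$, as one checks by differentiating $F(\xi(t))$ and eliminating $\xi_{tt}$ via \eqref{3}, producing a perfect cancellation. Carrying out the same computation with $\gamma$ in place of $\xi$ and substituting for $\gamma_{tt}$ via \eqref{RE'} yields
$$\frac{d}{dt}F(\gamma(t))=n\gamma_{t}(t)\bigl[e^{(2k-n)\gamma(t)}\eta_{1}(t)-e^{-n\gamma(t)}\eta_{2}(t)\bigr]=O(e^{-2(1-\delta)t}),$$
using \eqref{grad-pre} to bound $\gamma_{t}$ and the available bounds on $\gamma$ to control the exponentials. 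Integrating on $[t,\infty)$ shows that $F(\gamma(t))$ converges to a limit $h$ with $|F(\gamma(t))-h|=O(e^{-2(1-\delta)t})$. Setting $\eta_{3}(t):=-\bigl(F(\gamma(t))-h\bigr)e^{(n-2k)\gamma(t)}$ and $\eta_{4}\equiv 0$ then recasts the identity into the form \eqref{FI'}.

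\textbf{Main obstacle.} The principal technical point is the structural observation that spherical averaging annihilates $L[\gamma]\widehat w$---which rests on the rotation-invariance of the linearization of $\sigma_{k}$ about a radial profile---together with a uniform-in-$t$ bound on the quadratic Taylor remainder. For the latter, one must verify that the constants in the Taylor expansion depend polynomially on $\gamma_{t},\gamma_{tt}$ (both uniformly controlled by \eqref{grad-pre}) and on $e^{\pm 2k\gamma}$; the latter growth factor enters only through the matching $e^{-2k\gamma}$ already appearing on the right of \eqref{1w}, so after the normalization in \eqref{RE'} all remainders reduce to pure oscillation-squared contributions of the advertised order.
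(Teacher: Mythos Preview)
Your derivation of \eqref{RE'} is essentially identical to the paper's: decompose $w=\gamma+\widehat w$, Taylor-expand $\sigma_k(A_w)$ and $e^{-2kw}$ about the radial profile, note that the spherical average kills the linear piece $L[\gamma]\widehat w$ (and the linear piece $-2k\widehat w$), and absorb the quadratic remainders using \eqref{ave3}--\eqref{grad3}. Nothing to add there.

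For \eqref{FI'}, however, your argument only reproduces what the paper calls the \emph{crude variant} \eqref{FI2}, and it does not yield the claim in full. Two concrete gaps:

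\textbf{(i) The derivative bound fails when $2k>n$.} Your identity
\[
\frac{d}{dt}F(\gamma(t))=n\gamma_t\bigl[e^{(2k-n)\gamma}\eta_1 - e^{-n\gamma}\eta_2\bigr]
\]
is correct, but to conclude $O(e^{-2(1-\delta)t})$ you invoke ``available bounds on $\gamma$.'' At this stage only the \emph{lower} bound $\gamma\ge -C$ from \eqref{sup2} is available; there is no a priori upper bound on $\gamma$ (indeed, in the cases $2k>n$ and $2k<n,\ h=0$ one later proves $\gamma(t)\to\infty$). When $2k>n$ the factor $e^{(2k-n)\gamma}$ can therefore swamp the decay of $\eta_1$, and the integrability of $\frac{d}{dt}F(\gamma)$ is unjustified.

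\textbf{(ii) Your recasting into \eqref{FI'} does not give $\eta_3=O(e^{-2(1-\delta)t})$.} Even granting $|F(\gamma(t))-h|=O(e^{-2(1-\delta)t})$ (the crude \eqref{FI2}, valid for $2k\le n$), your choice $\eta_4\equiv0$ forces $\eta_3=(h-F(\gamma))e^{(n-2k)\gamma}$. When $2k<n$ and $\gamma\to\infty$ (the $h=0$ case), the factor $e^{(n-2k)\gamma}$ blows up and destroys the decay. The precise factored form of \eqref{FI'}, with \emph{both} $\eta_3$ and $\eta_4$ of the stated order, is exactly what is used in Case~(a) and Case~(c) of the proof of Theorem~1, and it cannot be obtained just by redistributing the error in \eqref{FI2}.

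The paper explicitly flags this: the ODE route gives \eqref{FI2} only for $2k\le n$, while the sharp \eqref{FI'} (needed for $2k>n$, and for $2k<n$ with $h=0$) is obtained instead from a Pohozaev-type identity (equation \eqref{poh}), which produces the two error terms $\eta_3,\eta_4$ with the correct placement relative to $e^{(2k-n)\gamma}$ and $e^{-n\gamma}$. That structural input is missing from your argument.
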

  We will postpone a proof for \eqref{RE'} and \eqref{FI'} to the end of this section.
We can think of $h$ as a numerical characteristic to each potential isolated  singularity. 
We make another claim relating  the asymptotic behavior of $u$ with that of $\gamma(t)$, $h$, and $k$.
\begin{claim} Let $u$ be  a positive solution   to \eqref{1w'} in the $ \Gamma_k^+$ class
in a punctured ball $B_R\setminus \{0\}$.
\begin{itemize}
\item[(i)] If \eqref{low} holds, namely
\[
\liminf_{x\to 0} |x|^{\frac{n-2}{2}}u(x) >0,
\]
then $2k<n$ and $h> 0$. Furthermore, for some $\epsilon>0$,
\begin{equation}\label{sgn}
1- \gamma_t^2 (t) \ge \epsilon
\end{equation} for all sufficiently large $t$.
\item[(ii)] In the case $2k<n$,  condition \eqref{rem} holds, namely 
\[
\liminf_{x\to 0} |x|^{\frac{n-2}{2}}u(x) =0
\]
iff $h=0$; in such cases, we furthermore have,  
\begin{equation}\label{nobb}
\lim_{x\to 0} |x|^{\frac{n-2}{2}}u(x) =0, \quad \gamma_t(t) >0 \quad \text{for $t$ large,
 and} \quad \lim_{t\to\infty} \gamma (t) = \infty.
\end{equation}
\end{itemize}
Combining (i) and (ii), we see that in the case $2k<n$, we always have $h\ge 0$, with $h=0$ iff \eqref{rem} holds.
\end{claim}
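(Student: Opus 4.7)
The plan is to translate \eqref{low} and \eqref{rem} into conditions on $\gamma(t)$ via $|x|^{(n-2)/2}u(x)=e^{-(n-2)w/2}$ and \eqref{ave3}, then exploit the approximate first integral \eqref{FI'} together with a blow-up argument and the classification in Theorems~C and~D. Since $|w-\gamma|\le\tilde C e^{-t}$, \eqref{low} is equivalent to $\limsup_{t\to\infty}\gamma(t)<\infty$ and \eqref{rem} to $\limsup_{t\to\infty}\gamma(t)=\infty$; in all cases $\gamma\ge -C'$ by \eqref{sup2} and $\gamma_t$ is bounded by \eqref{grad3}.

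For part (i), assume $\gamma$ is bounded. The uniform $C^2$-bounds on $w$ allow one to extract from any $t_j\to\infty$ a $C^2_{\rm loc}$-subsequential limit $\xi(t):=\lim w(t_j+t,\theta)$, which by \eqref{ave3} is $\theta$-independent and is a global radial solution to \eqref{1w} on $\mathbb R\times\mathbb S^{n-1}$ in the $\Gamma_k^+$ class. Theorem~C then rules out $2k\ge n$: the case $2k=n$ gives $\xi\to\infty$ at infinity (contradicting boundedness) and $2k>n$ forces $u$ to extend smoothly so $U\to 0$ (contradicting \eqref{low}). When $2k<n$, a bounded $\xi$ must be periodic with first integral $h'>0$, since $h'=0$ corresponds to the unbounded stereographic solution. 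Passing to the limit in \eqref{FI'} along $t_j$ and using $\eta_i(t_j)\to 0$ identifies $h=h'>0$. For the bound $1-\gamma_t^2\ge\epsilon$, solving \eqref{FI'} gives
\[
(1-\gamma_t^2)^k \;=\; \bigl[h+e^{-n\gamma}(1+\eta_4)\bigr]\,e^{(n-2k)\gamma}-\eta_3,
\]
whose right side is bounded below by a positive constant for $t$ large (using $h>0$, $\gamma$ bounded, and $\eta_i\to 0$). The correct sign $1-\gamma_t^2>0$ is transferred from the blow-up limit, where $1-\xi_t^2>0$ by Theorem~C: for any $t_j\to\infty$ along which $\gamma_t$ converges, the limit is $1-\xi_t^2(0)>0$, ruling out the spurious branch.

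For part (ii), the direction $h=0\Rightarrow\liminf U=0$ is the contrapositive of~(i). Conversely, if $\liminf U=0$, pick $t_j\to\infty$ with $\gamma(t_j)\to\infty$; since $2k-n<0$ and $-n<0$ and $\gamma_t$ is bounded, both terms on the left side of \eqref{FI'} at $t_j$ tend to $0$, forcing $h=0$. For the ``furthermore'' assertions, dividing \eqref{FI'} (with $h=0$) by $e^{(2k-n)\gamma}$ yields $(1-\gamma_t^2)^k+\eta_3=e^{-2k\gamma}(1+\eta_4)$; at any stationary point $t^*$ of $\gamma$ this forces
\[
e^{-2k\gamma(t^*)}\;=\;\frac{1+\eta_3(t^*)}{1+\eta_4(t^*)}\;\longrightarrow\; 1,\quad t^*\to\infty,
\]
so $\gamma(t^*)\to 0$. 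Consequently every local extremum of $\gamma$ beyond some $T$ has value $<1/2$; monotonicity of $\gamma$ between consecutive extrema would then force $\gamma\le 1/2$ on $[T,\infty)$, contradicting $\limsup\gamma=\infty$. Hence $\gamma$ has only finitely many extrema, is eventually monotone, and, by $\limsup=\infty$, monotone increasing, yielding $\gamma_t\ge 0$ eventually and $\gamma\to+\infty$. Strict $\gamma_t>0$ follows from $(1-\gamma_t^2)^k\to 0$ as $\gamma\to\infty$ (so $\gamma_t^2\to 1$), and $U=e^{-(n-2)w/2}\to 0$ uniformly in $\theta$ since $w\sim\gamma\to\infty$.

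The main technical obstacle is the positivity of $1-\gamma_t^2$ in part~(i): \eqref{FI'} only bounds $|1-\gamma_t^2|^k$ from below, and ruling out the spurious branch $1-\gamma_t^2<0$ (for $k\ge 2$, in particular $k$ even) relies on the $\Gamma_k^+$ structure inherited by every blow-up limit through Theorem~C, together with a continuity argument along arbitrary sequences $t_j\to\infty$; a parallel continuity consideration ensures that $\gamma_t$ cannot vanish at large values of $\gamma$ in the ``furthermore'' part of~(ii).
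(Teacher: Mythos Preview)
Your proof is correct and follows essentially the same route as the paper: the blow-up/compactness argument combined with Theorem~C to obtain $2k<n$ and $h>0$ in part~(i), the contrapositive plus direct evaluation of \eqref{FI'} along $\gamma(t_j)\to\infty$ for part~(ii), and the observation that critical points of $\gamma$ are pinned near $\gamma=0$ for the ``furthermore'' clause. The only notable difference is in the derivation of \eqref{sgn}: the paper establishes it \emph{before} $h>0$, using \eqref{RE'} to show $1-\gamma_t^2$ cannot vanish (the factor $(1-\gamma_t^2)^{k-1}$ would kill the left side while the right side stays bounded away from zero) and then invoking boundedness of $\gamma$ to rule out $|\gamma_t|>1$; you instead establish $h>0$ first and then read off $(1-\gamma_t^2)^k\ge c>0$ from \eqref{FI'}, handling the sign via the blow-up limit. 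Both orderings work; the paper's has the mild advantage that \eqref{sgn} is then available as an input for its alternative ``second proof'' of $h>0$ by continuous dependence, while yours leans entirely on the classification, which is cleaner but uses the $\Gamma_k^+$ structure of the limit more heavily. One small imprecision: for $2k>n$ the limiting $u$ need not extend \emph{smoothly} (only $C^{2-n/k}$ per Theorem~C), but your conclusion $U\to 0$, hence $\xi$ unbounded, is unaffected.
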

\begin{proof}
\eqref{sgn}  is proved by noting that \eqref{low} and \eqref{sup2} imply that
\begin{equation}\label{ul}
-C \le \gamma (t) \le C
\end{equation}
for some $C$, which, together with \eqref{RE'}, implies that, for large $t$,
 $1- \gamma_t^2$ never changes sign, which, in turn  with  \eqref{ul}, \eqref{grad-pre} and \eqref{RE'}, implies that, for
some $\epsilon>0$,  $1- \gamma_t^2>\epsilon$ for all sufficiently large $t$. 

The part $h> 0$ in (i)  can be proved in one of two ways. The first proof uses rescaling and compactness arguments
on the translations to $\gamma (t)$, with the help of  \eqref{ul},  \eqref{grad-pre} and \eqref{FI'} to produce
a limiting $\widehat \gamma (t)$ which exists and is bounded for all $t\in \mathbb R$ and satisfies \eqref{3} with 
\[
e^{(2k-n)\widehat \gamma (t)}(1-\widehat \gamma_t^2(t))^k - e^{-n \widehat \gamma (t)} =h
\]
for the same $h$. But the classification result, Theorem C, says that no bounded solution
of \eqref{3} exists for all  $t\in \mathbb R$ with  $h\le 0$. 
Since no  bounded solution exists to \eqref{3} for  all $t\in \mathbb R$
when $2k \ge n$ according  to Theorem C, this argument also shows that \eqref{low} implies that $2k<n$;
equivalently, \eqref{rem} must hold in the case $2k\ge n$.

The second proof regards \eqref{RE'}
as a perturbation of \eqref{3}, and makes a continuous dependence argument,
with the help of \eqref{FI'}, \eqref{sgn},
and \eqref{ul} to prove that, when $h<0$, either $1-\gamma_t^2(t) \to 0$
as $t \to \infty$, which contradicts \eqref{sgn}, or $1-\gamma_t^2(t)\to -\infty$ as
$t \to \infty$ in the case $1-\gamma_t^2(t) <0$ and $k$ is odd, which contradicts \eqref{grad-pre}.
The case $h=0$ can also be ruled out along similar lines by a more careful argument.

\eqref{rem} is equivalent to 
\begin{equation}\label{nob}
\limsup_{t\to\infty} \gamma (t) = \infty.
\end{equation}
So when \eqref{rem} holds and $2k<n$, it follows from  \eqref{nob},
\eqref{FI'} and \eqref{grad-pre} directly that $h=0$. For the converse in (ii), 
when $h=0$, it follows from (i) that \eqref{rem}  must hold, thus proving (ii).
In addition, it follows from \eqref{FI'} that, for sufficiently large $t$, $\gamma_t(t)=0$ 
can occur only near $\gamma(t)=0$.  Together with \eqref{nob}, we see that 
$\gamma_t(t) >0$ for sufficiently large $t$ and $\lim_{t\to \infty} \gamma(t) = \infty$.
\end{proof}

We now proceed to prove  \eqref{maincon}. Our proof will handle four cases
slightly differently: 
\emph{Case (a). $h=0$; Case (b). $h>0$ and $2k <n$;  Case (c). $2k> n$ and $h\ne 0$;
and Case (d).  $2k=n$ and $h\ne 0$}.
Cases (a), (c) and (d) are proved by finding the asymptotics of $\gamma(t)$ directly using \eqref{FI'},
while Case (b) will need the help of Theorem~\ref{odethm}.
\begin{proof}[Case (a). $h=0$]
Using $\lim_{t\to \infty} \gamma(t) = \infty$ 
back into \eqref{FI'}, 
which now takes the form
\[
e^{2k \gamma} \left\{(1-\gamma_t^2)^{k}+\eta_3 (t)\right\}-  \left\{1 +\eta_4 (t)\right\}=0,
\]
we see that $1-\gamma_t^2(t) =: \eta(t) \to 0$
as $t\to \infty$. Since $\gamma_t(t) >0$ for sufficiently large $t$, we conclude that
$1- \gamma_t(t) \to 0$ as $t\to \infty$. As a consequence, $ \gamma(t) \ge (1-\epsilon)t +\gamma_0$
for large $t$ and some $\epsilon>0$ small and $\gamma_0$. This would imply through \eqref{FI'} that
\[
| \eta(t)| \le C e^{ - \frac{2(1-\delta)}{k} t}
\]
for some constant $C>0$ and for large $t$. Finally, we have
\[
|\gamma_t(t)-1| =| \sqrt{1-\eta(t)}-1|\le C e^{ - \frac{2(1-\delta)}{k} t},
\]
from which we conclude that
\[
\gamma(t) - t = \tau+ O(e^{ - \frac{2(1-\delta)}{k} t}),
\]
for some $\tau$ as  $t\to \infty$. 

Note that $\xi_0(t)$, the solution to \eqref{3} with $h=0$, to which  \eqref{RE'} is a perturbation,  
satisfies $\xi_0(t)=t - \ln 2 + O(e^{-2t})$.
Therefore, using also  \eqref{ave3},
\[
w(t,\theta)  = \gamma (t)  + \widehat w(t,\theta)= \xi_0(t+\tau +\ln 2) + O(e^{ - \frac{2(1-\delta)}{k} t}),
\]
as $t\to \infty$, which is \eqref{maincon}. Furthermore
\[
\begin{split}
u(x) &= e^{-\frac{n-2}{2} (w(t,\theta) -t)} 
=  e^{-\frac{n-2}{2} \left( \xi_0(t+\tau +\ln 2)-t +  O(e^{ - \frac{2(1-\delta)}{k} t})\right)}\\
&= u^*(|x|) e^{  O(e^{ - \frac{2(1-\delta)}{k} t})}
=  u^*(|x|) \left( 1+ O(e^{ - \frac{2(1-\delta)}{k} t}) \right)\\
&=  u^*(|x|) \left( 1+O( |x|^{  \frac{2(1-\delta)}{k} })\right)
\end{split}
\]
where 
\[
 u^*(|x|) = e^{-\frac{n-2}{2} \left( \xi_0(t+\tau +\ln 2)-t \right)}
\]
is a positive radial solution to \eqref{1w'} on $\mathbb R^n \setminus \{0\}$.
We also find in this case that 
\[
\lim_{x \to 0} u(x) =  e^{-\frac{n-2}{2} \tau } =: u(0) >0
\]
exists, with
\[
\begin{split}
|u(x)-u(0)| &\le |u(0)| \left|  
e^{-\frac{n-2}{2}\left(\widehat w(t,\theta) 
+ O(e^{ - \frac{2(1-\delta)}{k} t})\right)}-1\right|
\le C e^{ - \frac{2(1-\delta)}{k} t} \\
&\le C|x|^{\frac{2(1-\delta)}{k}}.
\end{split}
\]
\end{proof}

\begin{proof}[Case (b).  $2k<n$ and $h>0$]
Here $h$ is subject to the  further bound
\[
 h \le h^{*} =\frac{2k}{n-2k}\left( \frac{n-2k}{n} \right)^\frac{n}{2k},
\]
with $h^{*}$  determined by
\[
h^{*} :=\sup\{h: \min_{\gamma} \left(e^{-2k\gamma}+ he^{(n-2k)\gamma}\right) \le 1\}.
\]
Set 
\[
H(x,y)=h+ e^{-n y}-e^{(2k-n)y}(1-x^2)^k.
\]
For $0< h < h^{*}$, $H(0,\xi)=0$ has two simple roots  $\xi_{-}< \xi_{+}$ and $H$ satisfies the
conditions in case (ii) of Theorem~\ref{odethm} with $m= \xi_{-}$ and $l=1$; 
for $h= h^{*}$, $H(0,\xi)=0$ has a double root $m=\xi_{-}=\xi_{+}$,
$H(0, m)=0$ and $H(x,y)\ge 0$ satisfies the conditions in case (i) of
 Theorem~\ref{odethm} as well with $l=2$. Thus, thanks to \eqref{RE'} and \eqref{FI'},  we can apply Theorem~\ref{odethm} to conclude
Theorem~\ref{main} in this case.
\end{proof}

\begin{proof}[Case (c). $2k> n$ and $h \ne 0$] As remarked earlier, \eqref{rem} holds, which implies \eqref{nob}.
\eqref{FI'} implies that, for large $t$,  
$\gamma'(t)=0$ can occur only when $\gamma (t)$ is near  certain finite value.
Together with \eqref{nob}, this implies \eqref{nobb}, which,
together with \eqref{FI'}, implies that 
$(1-\gamma_t^2(t))^k \to 0$ as $t \to \infty$. Then the conclusions \eqref{maincon} and 
\eqref{hol} are  proved in almost identical way as was done above for the
$h=0$ case. 
\end{proof}
\begin{proof}[Case (d). $2k=n$ and $h \neq 0$.]
We first make 
\begin{claim}
If $e^{-2w(t,\theta)}(dt^2+d\theta^2) \in \Gamma_2^+$ for all $\theta \in \mathbb S^{n-1}$ at some $t$, then
\begin{equation}\label{gamma}
1- \gamma_t^2(t) + \mint|\grad \widehat w (t,\theta)|^2 d\theta \ge 0,
\end{equation}
where $\gamma(t)$ and $\widehat w (t,\theta)$ are defined as before.
\end{claim}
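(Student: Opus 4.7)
The plan is to extract a pointwise algebraic consequence of the $\Gamma_2^+$ hypothesis and then average it over the sphere slice at the given $t$. Abbreviate
\[
\tilde{A}:=A_{g_0}+\nabla^2 w+dw\otimes dw-\tfrac{1}{2}|\nabla w|^2 g_0,
\]
so that $g=e^{-2w}g_0$ lies in $\Gamma_2^+$ at $(t,\theta)$ exactly when $\sigma_1(\tilde{A})>0$ and $\sigma_2(\tilde{A})\ge 0$. Work in an orthonormal frame $\{\partial_t,e_1,\dots,e_{n-1}\}$ adapted to the cylinder product structure. The elementary identity $2\sigma_2=\sigma_1^2-\mathrm{tr}(\tilde{A}^2)$ together with $\mathrm{tr}(\tilde{A}^2)=\sum_{a,b}\tilde{A}_{ab}^2\ge \tilde{A}(\partial_t,\partial_t)^2$ gives $\sigma_1(\tilde{A})^2\ge \tilde{A}(\partial_t,\partial_t)^2$, and combining this with $\sigma_1>0$ yields the key pointwise inequality
\[
\sigma_1(\tilde{A})\ge \tilde{A}(\partial_t,\partial_t)\quad\text{at every $(t,\theta)$ where $\Gamma_2^+$ holds.}
\]

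The second step is to expand $\sigma_1(\tilde{A})-\tilde{A}(\partial_t,\partial_t)$ explicitly. Using $(A_{g_0})_{00}=-1/2$ and $(A_{g_0})_{ii}=1/2$ for sphere directions on the round cylinder, $|\nabla w|^2=w_t^2+|\nabla_\theta w|^2$, and $\Delta_{g_0}=\partial_t^2+\Delta_{\mathbb{S}^{n-1}}$, a direct computation gives
\[
\sigma_1(\tilde{A})-\tilde{A}(\partial_t,\partial_t)=\frac{n-1}{2}(1-w_t^2)-\frac{n-3}{2}|\nabla_\theta w|^2+\Delta_{\mathbb{S}^{n-1}}w\;\ge\; 0.
\]

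The final step is to integrate this inequality in $\theta\in\mathbb{S}^{n-1}$ at the fixed $t$. The spherical Laplacian term averages to zero by the divergence theorem on the closed sphere, so for $n\ge 3$
\[
(n-1)\Bigl(1-\mint w_t^2\,d\theta\Bigr)\;\ge\;(n-3)\mint|\nabla_\theta w|^2\,d\theta\;\ge\;0,
\]
which forces $\mint w_t^2\,d\theta\le 1$. The orthogonal decomposition $w=\gamma(t)+\widehat w(t,\theta)$ with $\mint\widehat w\,d\theta=0$ gives $\mint w_t^2\,d\theta=\gamma_t^2+\mint\widehat w_t^2\,d\theta$, whence
\[
1-\gamma_t^2+\mint|\nabla\widehat w|^2\,d\theta\;\ge\;\mint\widehat w_t^2\,d\theta+\mint|\nabla\widehat w|^2\,d\theta\;\ge\;0.
\]

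The only conceptual ingredient is the short consequence $\sigma_1(\tilde{A})\ge \tilde{A}(\partial_t,\partial_t)$ drawn from $\sigma_2\ge 0$; once this is isolated, the rest is a routine frame computation on the cylinder combined with $\int_{\mathbb{S}^{n-1}}\Delta_{\mathbb{S}^{n-1}}w\,d\sigma=0$. I anticipate no serious obstacle beyond verifying the signs in the explicit expansion of $\sigma_1-\tilde{A}_{00}$.
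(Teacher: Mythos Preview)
Your argument is correct and takes a genuinely different route from the paper's. The paper first averages the Schouten tensor over the sphere, invoking the convexity of the cone $\Gamma_2^+$ to conclude that $\mint A_g(t,\theta)\,d\theta\in\Gamma_2^+$; it then computes this averaged matrix explicitly, uses a matrix inequality to dominate the off-diagonal quadratic terms, and finally reads off two scalar inequalities from $\sigma_1>0$ and $\sigma_2>0$ of the resulting diagonal matrix, combining them by elementary algebra to obtain \eqref{gamma}.

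Your approach instead extracts a \emph{pointwise} scalar consequence of $\Gamma_2^+$, namely $\sigma_1(\tilde A)\ge \tilde A(\partial_t,\partial_t)$, via the Newton identity $2\sigma_2=\sigma_1^2-\mathrm{tr}(\tilde A^2)$ and the trivial bound $\mathrm{tr}(\tilde A^2)\ge \tilde A_{00}^2$; only then do you average. This is more elementary (no cone convexity, no matrix comparison), and it actually yields the sharper statement $1-\gamma_t^2\ge \mint \widehat w_t^2\,d\theta\ge 0$, from which \eqref{gamma} follows trivially. The paper's route, on the other hand, is the natural one if one thinks in terms of $\Gamma_k^+$ as a convex cone, and its structure would adapt more readily to analogous claims involving $\sigma_j$ for $j>2$. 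Your explicit computation of $\sigma_1(\tilde A)-\tilde A_{00}$ checks out against the paper's formula \eqref{matri} for $A_g$ on the cylinder.
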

 Assuming \eqref{gamma} now, then
\eqref{gamma}, \eqref{grad3} and \eqref{FI'} imply that $0\le h \le 1$. 
The case $h=1$ can be ruled out after further
analysis of  \eqref{FI'}.
We can again argue as above that \eqref{nobb} holds.
Then \eqref{FI'} implies that 
$(1-\gamma_t^2(t))^k \to h$ as $t \to \infty$; and $e^{-n \gamma(t)} = O(e^{-\alpha t})$ as $t \to
\infty$ for some $\alpha >0$ depending on $0< h <1$.
Now with $\eta (t) := 1- \gamma_t^2(t)$, we find 
$$\eta^k(t) = h+e^{-n\gamma(t)}\left(1+\eta_4(t)\right)-\eta_3(t)
=h +  O(e^{-\alpha t})$$
as $t \to\infty$, and 
\[
\gamma_t(t) = \sqrt{1-\eta (t)} = \sqrt{1- \sqrt[k]{h}} +  O(e^{-\alpha t}),
\]
which implies that $\gamma(t) =  \sqrt{1- \sqrt[k]{h}} t + \gamma_0 +  O(e^{-\alpha t})$, for some
$\gamma_0$. Similarly, $\xi_h(t)$ satisfies
$ \xi_h(t)= \sqrt{1- \sqrt[k]{h}} t + \xi_0 +  O(e^{-\alpha t})$, for some $\xi_0 $. Thus for some
$\tau$, we have
\[
\gamma(t) = \xi_h(t + \tau) +  O(e^{-\alpha t}),
\]
and 
\[
u(x) = e^{-\frac{n-2}{2}\left( w(t, \theta)-t\right)} 
= e^{-\frac{n-2}{2}\left(\gamma (t)-t +\widehat w(t, \theta)\right)},
\]
from which we find that
\[
|x|^{\frac{n-2}{2}\left(1- \sqrt{1- \sqrt[k]{h}}\right)}u(x)
=  e^{-\frac{n-2}{2}\left(\gamma (t)- \sqrt{1- \sqrt[k]{h}} t
+\widehat w(t, \theta)\right)},
\]
extends to a  $C^{\alpha}(B_R)$ positive function  for some $\alpha>0$.
\end{proof}

We now provide proofs for \eqref{RE'}, \eqref{FI'} and \eqref{gamma} in Claims 1 and 3. 
\begin{proof}[Proof of \eqref{RE'}]
\eqref{RE'} is derived from \eqref{3}, \eqref{ave3} and \eqref{grad3}  as follows.
First, with  $\widehat w  (t,\theta) := w(t,\theta) - \gamma (t)$,
it follows from \eqref{ave3} and \eqref{grad3} that
\begin{equation}\label{der1}
\sigma_k (A_{w(t,\theta)})= \sigma_k (A_{\gamma (t)})+ L_{\gamma (t)}[\widehat w (t,\theta)] + \widehat \eta_1  (t,\theta),
\end{equation}
where
$L_{\gamma (t)}$ denotes the linearized operator for $\sigma_k (A_{\gamma (t)})$ at $\gamma (t)$,
and $\widehat \eta_1  (t,\theta)$ satisfies $| \widehat \eta_1  (t,\theta)|=O(e^{-2(1-\delta)t})$.
Next,
\begin{equation}\label{der2}
e^{2kw(t,\theta)}=e^{2k\gamma (t)} \cdot e^{2k \widehat w  (t,\theta)},
\end{equation}
and
\begin{equation}\label{der3}
e^{-2k\widehat w  (t,\theta)} = 1-2k\widehat w  (t,\theta) + \widehat \eta_2  (t,\theta),
\end{equation}
where
\[
| \widehat \eta_2  (t,\theta)| = O(e^{-2t}).
\]
Putting \eqref{der1}, \eqref{der2}, and \eqref{der3} into \eqref{1w}, integrating over $\theta \in \mathbb S^{n-1}$,
and noting that 
\begin{equation}\label{ave0}
\int_{\mathbb S^{n-1}} \widehat w (t,\theta)\, d\theta =0,
\end{equation}
and
\begin{equation}\label{dave0}
\int_{\mathbb S^{n-1}}  L_{\gamma (t)}[\widehat w (t,\theta)]\, d\theta =0,
\end{equation}
we arrive at \eqref{RE'}.
\end{proof}
\begin{proof}[Proof of \eqref{FI'}]
A crude variant of \eqref{FI'} in the case $2k \le n$ can be derived from \eqref{RE'} by elementary means as follows.
Multiplying both sides of \eqref{RE'} by $ne^{-n\gamma (t) }\gamma_t (t)$, one
has
\[
[e^{(2k-n)\gamma (t)}(1-\gamma_t^2(t))^k - e^{-n\gamma (t)  }]_t =  ne^{-n\gamma (t) }\gamma_t (t)\left[e^{2k\gamma (t) }
\eta_1 (t)-\eta_2 (t)\right], 
\]
the right hand of which is of the order $O(e^{-2(1-\delta)t})$ as $t \to \infty$ in the case of $2k \le n$, 
from \eqref{sup2}, the gradient estimates, and the decay rates of $\eta_1 (t)$, $\eta_2(t)$.
It then follows that for some constant $h$, we have
\begin{equation}\label{FI2}
e^{(2k-n)\gamma (t)}(1-\gamma_t^2(t))^k - e^{-n\gamma (t)  } = h + O(e^{-2(1-\delta) t}).
\end{equation}
The more precise version, \eqref{FI'}, is needed only to handle the case $2k>n$, or 
the $h=0$ case when $2k<n$. It is derived from a Pohozaev type identity for solutions $w(t,\theta)$ to 
\eqref{1w} when $\sigma_k$ is a constant, which takes the form
\begin{equation}\label{poh}
\int_{\mathbb S^{n-1}} \left[ \frac{n}{2k\sigma_k} e^{(2k-n)w(t,\theta)} \sum_{a=1}^n 
T_{a1}[w(t,\theta)] w_{at}(t,\theta) - e^{-n w(t,\theta)} \right]\ d\theta = \widehat h
\end{equation} for some constant $\widehat h$ independent of $t$, where $T_{a1}[w(t,\theta)]$ are the components of
the Newton tensor associated with $\sigma_k(A_{w(t,\theta)})$.
Identities of the form \eqref{poh} for solutions to \eqref{1w} were first derived by Viaclovsky in \cite{V00}. 
\eqref{poh} is a version from \cite{H06}.
We assume \eqref{poh} now and postpone a proof to the end of this section. 
Using  \eqref{ave3} and \eqref{grad3}, we find that
\[
\begin{split}
 & \sum_{a=1}^n T_{a1}[w(t,\theta)] w_{at}(t,\theta) \\
= &   T_{11}[\gamma (t) ] \gamma_{tt} + \widehat L_{\gamma(t)}[\widehat w(t,\theta)] + O(e^{-2(1-\delta)t})\\
= & \frac{2k\sigma_k}{n} (1-\gamma_t^2(t))^{k-1}  \gamma_{tt} + \widehat L_{\gamma(t)}[\widehat w(t,\theta)] 
+ O(e^{-2(1-\delta)t}),
\end{split}
\]
where $ \widehat L_{\gamma(t)}$ stands for the linearization at 
$w(t,\theta)=\gamma(t)$ to $ \sum_{a=1}^n T_{a1}[w(t,\theta)] w_{at}(t,\theta)$, and we have used that
$ T_{a1}[\gamma (t) ]=2k\sigma_k \delta_{a1}  (1-\gamma_t^2(t))^{k-1}/n$.
Using  \eqref{RE'} to solve for $(1-\gamma_t^2(t))^{k-1}  \gamma_{tt}$, we find that
\[
(1-\gamma_t^2(t))^{k-1}  \gamma_{tt} = -\frac{n-2k}{2k}(1-\gamma_t^2(t))^{k} + \frac{n}{2k} e^{-2k\gamma(t)}(1+
\eta_2(t))- \eta_1(t).
\]
Using  these in \eqref{poh}, and with estimates like \eqref{der3}, \eqref{ave0} and \eqref{dave0}, we arrive at
\eqref{FI'}, in the case $2k\neq n$, with
\[
\widehat h = \frac{2k-n}{2k} |\mathbb S^{n-1}| h.
\]
When  $2k=n$, \eqref{FI'} is covered by \eqref{FI2}.
\end{proof}

\begin{proof}[Proof of \eqref{gamma}]
\eqref{gamma} is proved by noting that, if $A_g(t,\theta) \in \Gamma^+_2$ for all $\theta \in \mathbb S^{n-1}$, then
\[
\mint A_g(t,\theta)\ d\theta \in  \Gamma^+_2,
\]
due to the convexity of $\Gamma^+_2$.
In our case
the matrix for the Schouten tensor of the metric $g=e^{-2w(t,\theta)}\left(dt^2 + d\theta^2\right)$ is
\begin{equation}\label{matri}
A_g= \begin{bmatrix} 
w_{tt}+w_t^2-\frac 12 |\grad w|^2 - \frac 12 & w_{t\theta_j} + w_t w_{\theta_j} \\
 w_{ \theta_i t} +  w_{\theta_i} w_t & w_{\theta_i \theta_j} + w_{ \theta_i} w_{\theta_j}
+\frac 12(1- |\grad w|^2)\delta_{ij} 
\end{bmatrix}.
\end{equation}
Using $w(t,\theta)=\gamma (t) + \widehat w(t,\theta)$ and $\mint  \widehat w(t,\theta) \ d\theta =0$, we find
\[
\mint A_g(t,\theta)\ d\theta = \text{diag}[\gamma_{tt}(t)-\frac{1-\gamma_t^2(t)}{2}, \frac{1-\gamma_t^2(t)}{2}, \cdots, 
\frac{1-\gamma_t^2(t)}{2} ] - \frac{a(t)}{2}I_{n\times n} +
 \begin{bmatrix} b_{11}(t) & b_{1j}(t) \\ b_{i1}(t) & b_{ij}(t) \end{bmatrix},
\]
where
\[
a(t) = \mint |\grad \widehat w(t,\theta) |^2 \ d\theta,
\]
\begin{gather*}
b_{11}(t)=\mint | \widehat w_t(t,\theta)|^2 \ d\theta, \\
 b_{1i}(t)=  b_{i1}(t) = \mint \widehat w_t(t,\theta)  \widehat w_{\theta_i} (t,\theta) \ d\theta, \quad \text{ for
$i>1$,}\\
 b_{ij}(t) =  \mint \widehat w_{\theta_i}(t,\theta)  \widehat w_{\theta_j} (t,\theta) \ d\theta, \quad \text{ for
$i, j>1$}.
\end{gather*}
But
\[
a(t) I_{n\times n} -  \begin{bmatrix} b_{11}(t) & b_{1j}(t) \\ b_{i1}(t) & b_{ij}(t) \end{bmatrix} \ge 0
\]
as a matrix, so
\[
\text{diag}[\gamma_{tt}-\frac{1-\gamma_t^2(t)}{2}, \frac{1-\gamma_t^2(t)}{2}, \cdots,\frac{1-\gamma_t^2(t)}{2} ] +
 \frac{a(t)}{2}I_{n\times n} \in \Gamma^+_2.
\]
Computing the $\sigma_1$ and $\sigma_2$ of this tensor as in \eqref{2} it follows  that
\[
\gamma_{tt}(t) + \frac{n-2}{2}\left(1-\gamma_t^2(t)-a(t)\right) > 0,
\]
and
\[
\left(1-\gamma_t^2(t)+a(t)\right) \left[ 1-\gamma_t^2(t)+a(t)+\frac{4}{n} \left( \gamma_{tt}(t) + \gamma_t^2(t) -1 \right)
\right] >0.
\]
Simple algebra from these two inequalities implies \eqref{gamma}.
\end{proof}
Finally we sketch a proof for \eqref{poh}.  It follows from equation (3) in \cite{H06} that $\grad_a Y^a =0$,
where
\[
Y^a= T^a_b \grad^b \left(\text{div}_g X\right) + 2k\sigma_k X^a,
\]
for any conformal Killing vector field $X^a$ on $(M,g)$ with $\sigma_k(A_g)\equiv$ constant on $M$.
We will take $M=\mathbb R \times \mathbb S^{n-1}$,
$g=e^{-2w(t,\theta)}(dt^2+d\theta^2)$, and $X=\partial_t$. Thus
\begin{equation} \label{pohd}
\int_{\mathbb S^{n-1}} Y^1(t,\theta) \sqrt{g(t,\theta)} d\theta = \text{constant},
\end{equation}
independent of $t$.  In addition,
$\text{div}_g X= - n w_t(t,\theta)$,
and \eqref{pohd} would take the form
\[
\int_{\mathbb S^{n-1}} \left( -n \sum_{b=1}^n T_{1b} w_{tb}(t,\theta) e^{2k w(t,\theta)} +  2k\sigma_k \right)
e^{-n  w(t,\theta)} d\theta = \text{constant},
\]
which gives \eqref{poh}.

\section{Proof for Theorem~\ref{hodthm} and second proof for Theorem~\ref{main}}

We can now present our second proof for the main part of Theorem~\ref{main}: the case $2k<n$ and $h>0$.
Let $w(t,\theta)$ be a  solution to $\sigma_k(g^{-1}\circ A_g) = 2^{-k} \binom{n}{k}$, 
with 
$$g=u^{\frac{4}{n-2}}(x)|dx|^2=e^{-2w(t,\theta)}(dt^2 +d\theta^2)$$
 for $x$ to be over the
punctured unit ball $x \in B^n\setminus \{ 0\}$. It is assumed that $g$ is in the
$\Gamma_k^+$ class over $ B^n\setminus \{ 0\}$. Then $w(t,\theta)$ is defined for 
$(t,\theta) \in \mathbb R^+\times \mathbb S^{n-1}$, as $t= -\ln |x|$. It follows from Theorem E
that \eqref{sup2} holds, i.e., for some constant $C_2>0$, 
\[
e^{-2w(t,\theta)} \le C_2
\]
for all $(t,\theta) \in \mathbb R^+\times \mathbb S^{n-1}$. 
It follows from our discussion in the beginning of the previous section that $h>0$ implies  \eqref{low}, i.e.,  for some $C_1>0$
\[
e^{-2w(t,\theta)} \ge C_1
\]
for $(t,\theta) \in \mathbb R^+\times \mathbb S^{n-1}$, namely,
$w(t,\theta)$ is bounded over $(t,\theta) \in \mathbb R^+\times \mathbb S^{n-1}$. As in \cite{KMPS}, We  make the
following assertions about the behavior of $w(t,\theta)$ as $t \to \infty$.
\begin{enumerate}
\item[(a)] Let $t_j\to \infty$ be any sequence tending to $\infty$, then $\{w_j(t,\theta) := w(t+t_j, \theta)\}$
has a subsequence converging to a 
bounded limiting solution $\xi(t)$ of \eqref{3} defined for
$(t,\theta) \in \mathbb R\times \mathbb S^{n-1}$. The convergence 
is uniform on any compact subset of $\mathbb R^+\times \mathbb S^{n-1}$.
\item[(b)] Any angular derivative $\partial_{\theta} w(t,\theta)$ of $w$ converges to $0$ as $t \to \infty$.
\item[(c)] There exists $S>0$ such that for any infinitesimal rotation $\partial_{\theta}$ of $\mathbb S^{n-1}$,  and for
any $t_j\to \infty$, if we set $A_j = \sup_{t\ge 0} |\partial_{\theta} w_j (t,\theta)|$, and 
 if $|\partial_{\theta} w_j (s_j, \theta_j)|= A_j$ for some
$(s_j, \theta_j) \in \mathbb R^+\times \mathbb S^{n-1}$, then $s_j \le S$.
\item[(d)] $\partial_{\theta} w(t,\theta)$ converges to $0$ at an exponential rate as $t \to \infty$, and
$$|w(t,\theta)-|\mathbb S^{n-1}|^{-1}\int_{\mathbb S^{n-1}} w(t,\omega)d\omega|$$ 
converges to $0$ at an exponential rate as $t \to \infty$.
\item[(e)] There exists a bounded (periodic) solution $\xi (t)$ of \eqref{3} and $\tau \ge 0$ such that
$w(t,\theta)$ converges to $\xi (t +\tau )$  at an exponential rate as $t\to \infty$.
\end{enumerate}

(a)--(e) are proved along almost identical lines as in the proof for 
Proposition 5 in \cite{KMPS}, provided some analytical preparations are established.
In our case here, proofs for (a) and (b) can be provided 
using the local derivative
estimates of \cite{GW1} for solutions of \eqref{1} and the classification 
result in \cite{Li06}, reformulated as Theorem D above; proofs for (c) , (d) and (e) will need
an analysis of the linearized operator of \eqref{1w} at a radial solution $\xi (t)$, as characterized by
Proposition~\ref{la1} below. 
We remark that an alternative proof for (d) without using the analysis of
the linearized operator is in fact already contained in  \eqref{ave3} and \eqref{grad3}.

To compute the linearized operator of \eqref{1w} at a radial solution $\xi (t)$, we use \eqref{matri}.
When $w(t,\theta)=\xi(t)$, $A_g$ becomes a block diagonal matrix
\[
\begin{bmatrix}
\xi_{tt}+ \frac 12 (\xi_t^2-1) & 0 \\
0& \frac 12(1- | \xi_t|^2)\delta_{ij}
\end{bmatrix}.
\]
When we linearize $\sigma_k(A_g)$ at such a block diagonal matrix, the coefficient matrix consisting of
the coefficients of the Newton tensor
\[
T_{ij} = \frac{1}{(k-1)!} \delta^{i_1 \ldots i_{k-1} i}_{j_1 \ldots j_{k-1} j} A_{i_1j_1} \ldots
A_{ i_{k-1}  j_{k-1}}
\]
is also diagonal:
\[
T_{11}= \binom{n-1}{k-1} \frac {1}{2^{k-1}}(1- |\xi_t|^2)^{k-1},
\]
while for $i\ge 2$, 
\[
\begin{split}
T_{ii} &=  \binom{n-1}{k-1} \frac {(n-k)(1- |\xi_t|^2)^{k-1}}{2^{k-1}(n-1)}
+  \binom{n-2}{k-2} \frac {(1- |\xi_t|^2)^{k-2}}{2^{k-2}}\left[\xi_{tt}+ \frac 12 (\xi_t^2-1)\right]
\\
&=   \binom{n-1}{k-1}  \frac {(1- |\xi_t|^2)^{k-2}}{2^{k-2}(n-1)}\left[
(k-1) \xi_{tt} + \frac{n-2k+1}{2} \left(1- |\xi_t|^2\right)\right].
\end{split}
\]
So the linearization of $\sigma_k (A_g)$ at $g=e^{-2\xi(t)}(dt^2 + d\theta^2)$ is
\[
\begin{split}
L_{\xi}(\phi) &= T_{11}(t)\left[\phi_{tt}(t,\theta)+\xi_t (t) \phi_t(t,\theta)\right] + \sum_{i\ge 2} T_{ii}(t) 
\left[\phi_{\theta_i\theta_i}(t,\theta) - \xi_t (t) \phi_t(t,\theta) \right] \\
 &= T_{11}(t) \phi_{tt} (t,\theta)+ \left[  T_{11}(t) -(n-1) T_{22}(t)\right] \xi_t (t) \phi_t (t,\theta)+ 
 T_{22}(t) \Delta_{\theta} \phi(t,\theta), \\
&= \frac{(1- |\xi_t (t)|^2)^{k-2}}{2^{k-2}}\binom{n-1}{k-1} \left[ A(t) \phi_{tt}(t,\theta)+
B(t) \phi_t(t,\theta)+C(t)  \Delta_{\theta} \phi (t,\theta)\right],
\end{split}
\]
where
\begin{gather}
A(t)= \frac{(1- |\xi_t (t)|^2)}{2}, \\
B(t) = - \xi_t (t) \left[ (k-1) \xi_{tt} (t) + \frac{n-2k}{2} (1- |\xi_t (t)|^2) \right] ,\\
C(t) = \frac{k-1}{n-1} \xi_{tt}(t) + \frac{n-2k+1}{n-1} \cdot \frac{1- |\xi_t(t)|^2}{2}.
\end{gather}

When $\xi(t)$ is a solution to $\sigma_k(g^{-1}\circ A_g) = \text{const.}$, normalized to be $2^{-k} \binom{n}{k}$,
the linearization of the nonlinear PDE \eqref{1w} at $\xi(t)$ is then
\[
L_{\xi}(\phi) + 2^{1-k} k \binom{n}{k} e^{-2k \xi(t)} \phi (t,\theta) =0.
\]
If we take the projections of $\phi (t, \cdot)$ into spherical harmonics:
\[
\phi (t, \theta) = \sum_j \phi_j(t) Y_j(\theta), \quad \text{where $Y_j(\theta)$ are the normalized eigenfunctions
of $\Delta_{\theta}$ on $L^2(\mathbb S^{n-1})$.}
\]
then
$\phi_j(t)$ satisfies the ODE
\begin{equation} \label{*}
L_j [\phi_j] :=
  \phi_j^{''}(t)+ \frac{B(t)}{A(t)} \phi_j^{'}(t) +\left\{-\lambda_j \frac{C(t)}{A(t)}+\frac{ n e^{-2k\xi(t)}}
{2A(t)(1-\xi_t^2(t))^{k-2}}\right\}\phi_j(t)=0,
\end{equation}
where $\lambda_j$ are the eigenvalues of $\Delta_{\theta}$ on $L^2(\mathbb S^{n-1})$ associated with
$Y_j(\theta)$, thus 
\[
\lambda_0=0,\quad \lambda_1=\cdots=\lambda_n=n-1, \quad \lambda_j\ge 2n, \quad\text{for $j > n$.}
\]
Similar to properties of the linearized operator to the scalar curvature operator used in \cite{KMPS},
we have the following properties for the $L_j$'s.
\begin{proposition} \label{la1}
For all solutions $\xi_h (t)$ to \eqref{3} with $h>0$, $k<n$, and $j\ge 1$, the following holds:
	\begin{enumerate}
	\item[(i)] $L_j [\phi]=0$ has a pair of linearly
independent solution basis on $\mathbb R$, one of which grows unbounded and the other one
decays exponentially as $t\to \infty$;
	\item[(ii)] Any solution of $L_j [\phi]=0$ which is bounded for  $\mathbb R^+$ must decay exponentially;
	\item[(iii)] Any solution of $L_j [\phi]=0$ which is bounded for all of $\mathbb R $ must be identically $0$;
	\item[(iv)] Any solution of $L_j [\phi]=0$ which is bounded for all of $\mathbb R^+$ must be 
unbounded on $\mathbb R^-$.
	\end{enumerate}
These conclusions remain true for solutions $\xi_h (t)$ to \eqref{3} with $h=0$ and $\lambda_j \ge 2n$.
\end{proposition}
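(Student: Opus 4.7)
The plan is to use Floquet theory when $\xi_h(t)$ is periodic and asymptotic ODE analysis in the remaining regimes, extracting $(i)$--$(iv)$ from the location of the Floquet multipliers (or characteristic roots) relative to the unit circle. When $h > 0$ and $2k < n$, Theorem C implies $\xi_h$ is $T$-periodic, so all coefficients of $L_j$ are $T$-periodic and Floquet theory applies. A short computation using
\[
\frac{B(t)}{A(t)} = -2(k-1)\frac{\xi'_h \xi''_h}{1 - \xi'^2_h} - (n - 2k)\xi'_h
\]
and the periodicity of $\xi_h$ and of $\ln(1 - \xi'^2_h)$ gives $\int_0^T (B/A) \, dt = 0$, so by Abel's identity the two Floquet multipliers $\mu_j^{\pm}$ of $L_j$ satisfy $\mu_j^+ \mu_j^- = 1$.

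For the modes $j = 1, \ldots, n$ (where $\lambda_j = n - 1$), I would exhibit an explicit Jacobi field coming from the Euclidean translation symmetry of \eqref{1w'}: differentiating the $n$-parameter family $u_a(x) = u(x - a)$ at $a = 0$ and expressing the result in cylindrical variables produces the solution
\[
\phi_j(t,\theta) = e^{-t}\bigl(1 + \xi'_h(t)\bigr) Y_j(\theta),
\]
with $Y_j(\theta) = \theta_j$ a first-order spherical harmonic. Since $1 + \xi'_h$ is $T$-periodic, $\phi_j$ is a Floquet solution with multiplier $\mu_j^- = e^{-T} < 1$, and the reciprocity above forces $\mu_j^+ = e^T > 1$. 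This gives $(i)$ immediately for $j = 1, \ldots, n$; $(ii)$--$(iv)$ then follow from the one-dimensionality of the stable and unstable Floquet subspaces of $L_j$.

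For the higher modes $j > n$ (with $\lambda_j \ge 2n$) no natural Jacobi field is available. I would treat $\lambda$ as a continuous parameter and put the equation in the self-adjoint Hill form $(p\phi')' + (q - \lambda p C/A)\phi = 0$, using the $T$-periodic integrating factor $p = \exp(\int_0^t B/A)$; here $C/A > 0$ by the $\Gamma_k^+$ condition. Since at $\lambda = n - 1$ the multipliers are $e^{\pm T}$ (real, reciprocal, off the unit circle), the task is to show the trace of the monodromy matrix stays outside $[-2, 2]$ for all $\lambda \ge 2n$. A Sturm comparison with a constant-coefficient model together with the discrete band structure of Hill's equation should ensure the multipliers remain real and reciprocal as $\lambda$ passes through the discrete values $\lambda_j \ge 2n$.

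Finally, the non-periodic cases -- $h = 0$ and $h > 0$ with $2k \ge n$ -- are handled by asymptotic integration: Theorem C gives explicit asymptotics $\xi_h(t) = \alpha t + \mathrm{const} + O(e^{-\sigma t})$ as $t \to \pm\infty$ for appropriate $\alpha, \sigma > 0$, so the coefficients of $L_j$ tend to constants at a controlled rate. Levinson's theorem then yields two independent solutions of the form $e^{\rho^{\pm} t}(1 + o(1))$ whose characteristic roots $\rho^{\pm}$ are computed from the limiting constant-coefficient equation; under $\lambda_j \ge 2n$ one checks these are real and of opposite sign, giving $(i)$--$(iv)$. The main obstacle throughout will be the $j > n$ step in the periodic case: without an explicit Jacobi field, one must show quantitatively that the monodromy trace exits $[-2,2]$ for all $\lambda_j \ge 2n$, which is delicate because the band edges of the Hill operator are determined only implicitly by $\xi_h$.
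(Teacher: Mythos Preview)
Your treatment of the modes $j = 1, \ldots, n$ via the translation Jacobi field matches the paper's argument; the paper also writes down the companion growing solution $e^{t}(1 - \xi'_h(t))$ directly from the even symmetry $t \mapsto -t$ of $\xi_h$ rather than from the Wronskian identity, but the content is the same.

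For $\lambda_j \ge 2n$, however, the paper's route is quite different and considerably more elementary than your Floquet/Hill band-structure plan, and it sidesteps precisely the obstacle you flag at the end. Instead of tracking the monodromy trace as $\lambda$ varies, the paper computes directly that the zeroth-order coefficient of $L_j$,
\[
-\lambda_j\,\frac{C(t)}{A(t)} + \frac{n\,e^{-2k\xi(t)}}{(1-\xi_t^2(t))^{k-1}},
\]
has a strictly negative upper bound once $\lambda_j \ge 2n$ and $k < n$; this uses the ODE \eqref{krad} and the first integral to rewrite $C/A$ and bound each term. With that sign in hand, $e^{\pm\lambda t}$ are supersolutions of $L_j$ for all small $\lambda > 0$, and $(ii)$ follows from a comparison argument on $\mathbb R^{\pm}$ against $|\phi(0)|e^{-\lambda(\pm t)} + \epsilon e^{\lambda(\pm t)}$; $(iii)$ and $(iv)$ are then immediate from the maximum principle, and the existence half of $(i)$ comes from a limiting construction on expanding intervals. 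None of this invokes periodicity, so the argument handles the non-periodic regimes ($h = 0$, or $h > 0$ with $2k \ge n$) uniformly, without any separate Levinson-type asymptotic integration.

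Your band-structure approach is not wrong in spirit, but the step ``show the trace stays outside $[-2,2]$ for all $\lambda \ge 2n$'' is a genuine gap as written: continuation from $\lambda = n-1$ could in principle cross a band, and ruling this out quantitatively would in the end require exactly the kind of sign estimate on the potential that the paper uses. So the paper's direct barrier argument is both shorter and complete, and it is what you should replace your $j > n$ step with.
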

While Proposition~\ref{la1} is sufficient for providing a proof for Theorem~\ref{main}, Theorem~\ref{hodthm} 
requires some more detailed knowledge about the linearized operator to \eqref{1w}. More
specifically, the decay rates of bounded solutions to $L_j [\phi]=0$ on $\mathbb R^+$ need to be faster than
$e^{-t}$ when $\lambda_j \ge 2n$. $L_j$ is an ordinary differential operator with period coefficient,
so,  by Floquet theory,
 has a set of  well defined characteristic roots which give the exponential decay/grow rates
to solutions $\phi$ of $L_j [\phi]=0$ on $\mathbb R$, see, for instance, Theorem 5.1 in Chapter 3
of \cite{CL}. In fact, Theorem 5.1 in Chapter 3 of \cite{CL} and (5.11) on p. 81 of \cite{CL}
applied to $L_j$ implies that $L_j [\phi]=0$ has a set of fundamental solutions in the 
form of $e^{\rho_j t} p_1(t)$ and $e^{-\rho_j t}p_2(t)$ for some periodic functions $p_1(t)$ and $p_2(t)$,
when $\rho_j \ne 0$.  We have the following
\begin{lemma}\label{la2}
When $2k \le n$ and $h>0$, there is a $\beta_* > \sqrt{2}$ such that for all $\lambda_j \ge 2n$, the associated $\rho_j$
satisfies $\rho_j \ge \beta_*$.
\end{lemma}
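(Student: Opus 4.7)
The plan is to apply Floquet theory to the normal form of $L_j$ and combine a Sturmian comparison with a pointwise lower bound on the effective potential. First I would eliminate the first-derivative term in $L_j[\phi]=0$ via the substitution $\phi(t)=P(t)\psi(t)$ with $P'/P=-B/(2A)$; explicit integration yields $P(t)=(1-\xi_t^2)^{(k-1)/2}e^{(n-2k)\xi/2}$, which is periodic with the same period as $\xi_h$, so $\phi$ and $\psi$ share the same characteristic exponents. The function $\psi$ then satisfies the Hill equation
$$\psi''(t) + [\tilde Q(t)-\lambda_j V(t)]\psi(t) = 0,$$
where $V(t)=C(t)/A(t)$ is periodic and strictly positive (the positivity of $C$ is equivalent to the positivity of the Newton-tensor component $T_{22}(t)$, which holds throughout the $\Gamma_k^+$ region), and $\tilde Q(t)=q_0(t)-(B/2A)'-(B/2A)^2$ is periodic and independent of $j$.

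The crucial observation is a Sturmian comparison: if $G(t):=\lambda V(t)-\tilde Q(t)\ge c>0$ pointwise for some $c$, then every solution of $\psi''=G\psi$ grows at least as fast as $e^{\sqrt{c}\,t}$ (compare with $\chi''=c\chi$), so the positive characteristic exponent satisfies $\rho\ge\sqrt{c}$. Since $V>0$ and $\lambda_j\ge 2n$, it therefore suffices to establish the uniform pointwise bound
$$2n\,V(t)-\tilde Q(t)\ge 2+\epsilon$$
for some $\epsilon>0$; this would immediately give $\rho_j\ge\sqrt{2+\epsilon}=:\beta_*>\sqrt{2}$ uniformly in $j$ whenever $\lambda_j\ge 2n$. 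In the cylindrical limit $h=h^{*}$ (where $\xi_h$ is constant and $\xi_t\equiv 0$) direct computation yields $2nV-\tilde Q=[(n-2k)(n+1)+2n]/(n-1)\ge 2+2/(n-1)>2$, which settles this degenerate case. For general $h\in(0,h^{*}]$ I would exploit the first integral $e^{(2k-n)\xi}(1-\xi_t^2)^k-e^{-n\xi}=h$ together with the reduced first-order ODE
$$\xi_{tt}=\tfrac{2k-n}{2k}(1-\xi_t^2)+\tfrac{n}{2k}e^{-2k\xi}(1-\xi_t^2)^{1-k}$$
to express $V(t)$ and $\tilde Q(t)$ as explicit functions of $(\xi,\xi_t)$ along the level set $\{H=h\}$ in the $\Gamma_k^+$ region, reducing the pointwise estimate to a finite-dimensional algebraic inequality.

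The main obstacle will be verifying this algebraic inequality for small $h$, where $\xi_h$ has large oscillation and the cylindrical computation does not directly apply. Should the pointwise bound fail at isolated points of the $(\xi,\xi_t)$-plane, a two-stage fallback argument is available: the standard WKB asymptotic $\rho(\lambda)\sim\sqrt{\lambda}\,T^{-1}\int_0^T\sqrt{V(t)}\,dt$ for Hill equations with positive weight produces a threshold $\Lambda_0$ with $\rho_j>\sqrt{2}+1$ for every $\lambda_j\ge\Lambda_0$, leaving only finitely many discrete eigenvalues $\lambda_j\in[2n,\Lambda_0]$ to be handled individually. Each such exponent satisfies $\rho_j>0$ by Proposition~\ref{la1}, and the strict inequality $\rho_j>\sqrt{2}$ can then be enforced either by an averaged (rather than pointwise) Floquet estimate or by continuity in $h$ starting from the cylindrical calculation.
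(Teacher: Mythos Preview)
Your strategy is exactly the paper's: eliminate the first-order term by a Liouville substitution, obtain a Hill equation $\psi''+E(t)\psi=0$, and then show $E(t)\le -\beta_*^2<-2$ pointwise so that Sturm comparison with $\chi''=\beta_*^2\chi$ forces $\rho_j\ge\beta_*$. Two remarks.

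First, a minor slip: integrating $P'/P=-B/(2A)$ gives $P(t)=(1-\xi_t^2)^{-(k-1)/2}e^{(n-2k)\xi/2}$, not $(1-\xi_t^2)^{+(k-1)/2}$; equivalently $P=V^{-1}$ with the paper's $V(t)=e^{(1-n/2k)\xi}(e^{-n\xi}+h)^{(k-1)/(2k)}$ (these coincide via the first integral). This does not affect the argument.

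Second, and more substantively: your ``main obstacle'' is not an obstacle, and the WKB/continuity fallback is unnecessary. The paper carries the algebraic inequality through directly and uniformly in $h$. The point you are missing is that, once you use the first integral $e^{(2k-n)\xi}(1-\xi_t^2)^k=e^{-n\xi}+h$, both $C/A$ and $V''/V$ (hence all of $E(t)$) become explicit polynomials in the single variable
\[
s:=\frac{e^{-n\xi(t)}}{e^{-n\xi(t)}+h}\in(0,1],
\]
with coefficients depending only on $n,k$; in particular $C/A=\tfrac{n(k-1)}{k(n-1)}s+\tfrac{n-k}{k(n-1)}$. After dropping two manifestly nonpositive $|\xi'|^2$-terms one finds (for $\lambda_j\ge 2n$, $2k\le n$)
\[
E(t)\le -\frac{(n-2k)^2}{4k^2}-\frac{2n(n-k)}{k(n-1)}-\frac{2(n+3)k^2-4(n+1)k-n(n-1)}{4k^2(n-1)}\,n s,
\]
which is linear in $s\in(0,1]$ and therefore bounded above by its value at $s=0$ or $s=1$. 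Either endpoint gives $E(t)\le -2-\tfrac{2}{n-1}$ (respectively $E(t)\le -\tfrac{n+1}{2}$), both strictly below $-2$. Thus $\beta_*=\sqrt{2+2/(n-1)}$ works for every $h\in(0,h^*]$, and no separate treatment of small $h$ is needed. Your cylindrical check $2nV-\tilde Q=[(n-2k)(n+1)+2n]/(n-1)\ge 2+2/(n-1)$ is exactly the $s=0$ endpoint of this computation.
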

We can also formulate and prove a version that does not need $L_j$ to have the structure to apply the
Floquet theory.
\begin{lemma}\label{la3}
Define
\[
V(t)= e^{(1-\frac{n}{2k})\xi(t)}\left( e^{-n\xi(t)} +h\right)^{\frac{k-1}{2k}}.
\]
Then
\begin{equation}\label{odetrans}
V(t)  L_j[V^{-1}(t) \psi(t)] = \psi_{tt}(t)+ E(t)  \psi(t),
\end{equation}
where we can estimate $E(t)\le - C_n < -2$ when $\lambda_j \ge 2n$ and $2k\le n$.
As a consequence, when $\beta^2< C_n$, $e^{-\beta t} V^{-1}(t)$ satisfies
\[
 L_j[ e^{-\beta t} V^{-1}(t)] =\left[ \beta^2 + E(t) \right]  e^{-\beta t} V^{-1}(t)
\]
is a supersolution to $L_j[\phi]=0$ on $\mathbb R^+$, therefore
when $2k \le n$ and $h>0$, for any $\beta <  \beta_* :=\sqrt{C_n}$, and for all $\lambda_j \ge 2n$,  
any bounded solution $\phi$ of  $L_j [\phi]=0$ on $\mathbb R^+$ satisfies $|\phi(t)|\lesssim e^{-\beta t}$.
\end{lemma}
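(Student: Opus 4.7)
The plan splits into verifying the transformation identity first, then using it together with a maximum-principle argument to produce the decay estimate.

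The substitution $\phi = V^{-1}\psi$ is the classical Liouville transformation designed to eliminate the first-order coefficient of $L_j$; its validity here amounts to the identity $V'/V = B/(2A)$. Once this is in place, a direct computation gives $V L_j[V^{-1}\psi] = \psi_{tt} + E\psi$ with $E = q - (B/A)^2/4 - (B/A)'/2$, where $q$ denotes the zero-order coefficient of $L_j$ in \eqref{*}. To check $V'/V = B/(2A)$ I would use the first integral $h + e^{-n\xi} = e^{(2k-n)\xi}(1-\xi_t^2)^k$ from Theorem C to rewrite $V = e^{(2k-n)\xi/2}(1-\xi_t^2)^{(k-1)/2}$; logarithmic differentiation together with the radial equation $2k\xi_{tt} = (2k-n)(1-\xi_t^2) + n e^{-2k\xi}/(1-\xi_t^2)^{k-1}$ extracted from \eqref{3} yields the desired identity after cancellation.

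The key input for estimating $E$ is a uniform positive lower bound for $C/A$. Substituting the above formula for $\xi_{tt}$ into the definitions of $A$ and $C$ and simplifying algebraically gives
\[
\frac{C}{A} \;=\; \frac{n-k}{k(n-1)} \;+\; \frac{(k-1)\,n\, e^{-2k\xi}}{k(n-1)(1-\xi_t^2)^k},
\]
so when $2k\le n$ both terms are non-negative and $C/A \ge (n-k)/[k(n-1)]$ uniformly in $t$. Hence, for $\lambda_j \ge 2n$, $-\lambda_j\, C/A \le -2n(n-k)/[k(n-1)]$. The remaining contributions to $E$ are the zero-order term $n e^{-2k\xi}/[2A(1-\xi_t^2)^{k-2}]$, which reduces via \eqref{3} to $2k\xi_{tt}+(n-2k)(1-\xi_t^2)$, together with the Schwarzian-type correction $-(B/A)^2/4 - (B/A)'/2$; all of these are periodic and bounded in $t$ because $\xi$ is periodic by Theorem C. Letting $M$ be their supremum, $E \le -C_n$ with $C_n := 2n(n-k)/[k(n-1)] - M$, and the main algebraic task is verifying $C_n>2$. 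Granted this, the supersolution claim is immediate: taking $\psi = e^{-\beta t}$ in the transformation identity yields $L_j[e^{-\beta t}V^{-1}] = V^{-1}(\beta^2+E)e^{-\beta t}$, which is strictly negative when $\beta^2 < C_n$.

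Finally, let $\phi$ be a bounded solution of $L_j[\phi]=0$ on $\mathbb{R}^+$ and set $\psi = V\phi$, which is bounded since $V$ is periodic and bounded away from zero. Then $\psi_{tt}+E\psi = 0$ with $E \le -C_n<0$; the convexity relation $\psi'' = -E\psi \ge C_n\psi$ (wherever $\psi>0$) combined with boundedness forces $\psi'$ to be eventually monotone, whence $\psi \to 0$ at infinity. To capture the explicit rate, I compare $\psi$ with $Me^{-\beta t}$: the auxiliary function $\mu = Me^{-\beta t} - \psi$ satisfies $\mu_{tt}+E\mu = M(\beta^2+E)e^{-\beta t} < 0$, with $\mu(0)\ge 0$ for $M$ sufficiently large and $\mu\to 0$ at infinity. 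Since $E<0$, any interior negative minimum of $\mu$ would force $\mu_{tt}+E\mu \ge 0$ at that point, contradicting the strict inequality. Hence $\mu\ge 0$ on $[0,\infty)$; the symmetric comparison gives $|\psi| \le Me^{-\beta t}$, whence $|\phi| \lesssim e^{-\beta t}$ via the uniform bound on $V^{-1}$.

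The main obstacle is the explicit verification that $C_n>2$ in the second step: the formula for $E$ contains several partially cancelling contributions (the Liouville corrections $(B/A)^2/4$ and $(B/A)'/2$, the zero-order potential, and $-\lambda_j C/A$), and extracting a clean lower bound $\beta_* = \sqrt{C_n}>\sqrt{2}$ requires careful algebraic accounting using both the first integral and the ODE for $\xi$. Every other step—the Liouville substitution, the supersolution property, and the maximum-principle decay estimate—is routine once this numerical inequality is in hand.
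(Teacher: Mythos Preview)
Your approach is essentially the same as the paper's: Liouville substitution to kill the first-order term, the identity $C/A = (n-k)/[k(n-1)] + n(k-1)e^{-2k\xi}/[k(n-1)(1-\xi_t^2)^k]$, and a maximum-principle comparison for the decay. The supersolution and comparison arguments you give are fine and in fact slightly more detailed than what the paper writes out.

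The gap is precisely the one you flag: you never verify $C_n>2$, and your proposed route---bounding the remaining contributions by a crude supremum $M$ and hoping $2n(n-k)/[k(n-1)]-M>2$---is unlikely to close. Note that when $2k=n$ the leading term $2n(n-k)/[k(n-1)]$ equals $2n/(n-1)$, which exceeds $2$ by only $2/(n-1)$; a naive sup bound on the Schwarzian correction and the residual potential will typically swamp this margin. The paper instead computes $V''/V$ \emph{explicitly} using both the ODE and the first integral, combines all terms into a single expression in $e^{-n\xi}/(e^{-n\xi}+h)$ and $|\xi'|^2$, drops only those $|\xi'|^2$-terms that carry a manifestly negative coefficient, and then splits into two cases according to the sign of the polynomial $2(n+3)k^2-4(n+1)k-n(n-1)$. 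In the first case one obtains $E\le -(2k-n)^2/(4k^2)-2n(n-k)/[k(n-1)]\le -2-2/(n-1)$; in the second, the terms collapse to $E\le -(n+1)/2$. Either way $C_n>2$ with an explicit margin. So the missing idea is not merely ``careful accounting'' but a specific organization of the computation that exploits cancellations rather than bounding term by term.
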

\begin{remark} 
Lemma~\ref{la2} is an immediate consequence of \eqref{odetrans} in Lemma~\ref{la3}.
It is not immediately clear that the  characteristic roots  $\rho_j$ of $L_j$ are monotone
increasing as the $\lambda_j$ increases.  But in the case
$2k \le n$ and $h>0$, \eqref{odetrans} allows a variational construction
of a bounded (in fact, decaying) fundamental solution to $L_j[\phi]=0$ on $\mathbb R^{+}$ which 
is positive.  Thus in such cases comparison theorems show that the $\rho_j$ indeed is monotone
increasing as $\lambda_j$ increases.
\end{remark}
With such knowledge, we can now establish
\begin{proposition}\label{la4}
 Suppose that $\phi (t,\theta)$ satisfies
\begin{equation}\label{lindeca}
L_{\xi}(\phi) + 2^{1-k}k \binom{n}{k} e^{-2k\xi(t)}\phi(t,\theta)=
r(t,\theta),\quad \text{for $t\ge t_0$ and $\theta \in \mathbb S^{n-1}$.}
\end{equation}
Suppose that for some $0<\beta< \beta_*$ and $\beta \ne 1$, $|r(t,\theta)| \lesssim e^{-\beta t}$.
Then there exist constants $a_j$ for $j=1,\cdots, n$, such that
\begin{equation}\label{app}
|\phi (t,\theta) -\sum_{j=1}^n a_j e^{-t}(1+\xi_t(t))Y_j(\theta)|\lesssim e^{-\beta t}.
\end{equation}
In fact, when $\beta_* \le \beta <\rho_{n+1}$, \eqref{app} continues to hold, and when $\beta  > \rho_{n+1}$,
we will have
\begin{equation}\label{app2}
|\phi (t,\theta) -\sum_{j=1}^n a_j e^{-t}(1+\xi_t(t))Y_j(\theta)|\lesssim e^{-\rho_{n+1} t}.
\end{equation}
When $\beta =1$, \eqref{app} continues to hold if the right hand side is modified into
$t e^{-t}$; and when $\beta = \rho_{n+1}$, \eqref{app2} continues to hold if the right hand side is modified into
$t e^{-\rho_{n+1} t}$.
\end{proposition}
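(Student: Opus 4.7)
\textbf{Proof proposal for Proposition~\ref{la4}.} The plan is to decompose $\phi$ and $r$ in spherical harmonics and analyze mode by mode, using the explicit decaying Jacobi field for the translation modes and the supersolution of Lemma~\ref{la3} for the higher modes.

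Write $\phi(t,\theta)=\sum_{j\ge 0}\phi_j(t)Y_j(\theta)$ and $r(t,\theta)=\sum_{j\ge 0}r_j(t)Y_j(\theta)$. Each $\phi_j$ then solves the scalar ODE $L_j[\phi_j]=r_j$ with $|r_j(t)|\lesssim e^{-\beta t}$ (constants controlled, mode by mode, by angular Sobolev bounds on $\phi$ and $r$). The key algebraic input, which I would verify first, is that for each $j=1,\ldots,n$ (i.e.\ $\lambda_j=n-1$), the function $\Phi(t):=e^{-t}(1+\xi_t(t))$ lies in the kernel of $L_j$. One way to see this is by a direct computation substituting $\Phi$ into the formula for $L_j$ and reducing via the ODE \eqref{3} together with its $t$-derivative. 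A conceptual check is that $\Phi(t)Y_j(\theta)$ is the Jacobi field generated by infinitesimal translation of the singularity from $0$ to $\epsilon e_j$; because translations are conformal symmetries of $(\mathbb R^n,|dx|^2)$ they produce a one-parameter family of solutions to \eqref{1w}, and differentiation at $\epsilon=0$ gives the claimed solution of the linearized equation with exactly the angular profile $Y_j=\theta_j$. By Proposition~\ref{la1}, $\Phi$ spans the one-dimensional space of solutions decaying as $t\to\infty$, and the complementary solution $\Psi$ grows like $e^{+t}$.

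For the middle modes $j=1,\ldots,n$, I would solve $L_j[\phi_j]=r_j$ by variation of parameters using the pair $\{\Phi,\Psi\}$ with Wronskian $W$, writing
\[
\phi_j(t) = a_j\Phi(t)+b_j\Psi(t)+\Phi(t)\int_t^\infty\frac{\Psi(s)r_j(s)}{W(s)}\,ds + \Psi(t)\int_{t_0}^t\frac{\Phi(s)r_j(s)}{W(s)}\,ds.
\]
Boundedness of $\phi_j$ as $t\to\infty$ forces $b_j$ together with the growing contribution of the second integral to vanish (after absorbing a constant into $a_j$). Estimating the remaining integrals using $|r_j|\lesssim e^{-\beta s}$, $\Phi(s)\lesssim e^{-s}$, $\Psi(s)\lesssim e^{s}$ yields $|\phi_j(t)-a_j\Phi(t)|\lesssim e^{-\min(\beta,1)t}$ when $\beta\ne 1$. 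If $\beta<1$ this is already $e^{-\beta t}$; if $\beta>1$ the particular solution decays faster than $\Phi$ and the dominant error is absorbed into the coefficient $a_j$, again giving \eqref{app}. The resonant case $\beta=1$ produces the logarithmic-type factor $te^{-t}$ from the first integral.

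For the high modes $j\ge n+1$ with $\lambda_j\ge 2n$, I would apply Lemma~\ref{la3}: the function $V^{-1}(t)e^{-\beta t}$ is a positive supersolution of $L_j$ for every $\beta<\beta_*$. A standard comparison/maximum-principle argument on the second-order ODE, combined with the Floquet decomposition giving a positive decaying fundamental solution with rate $\rho_j\ge\beta_*$, then yields $|\phi_j(t)|\lesssim e^{-\beta t}$ when $\beta<\rho_j$, and $|\phi_j(t)|\lesssim e^{-\rho_j t}$ when $\beta>\rho_j$. Taking $\rho_{n+1}=\inf_{j\ge n+1}\rho_j$ and summing over $j\ge n+1$ (justified by rapid decay of spherical-harmonic coefficients from smoothness) gives the bounds \eqref{app} or \eqref{app2}; the borderline case $\beta=\rho_{n+1}$ produces the $te^{-\rho_{n+1}t}$ factor by the same resonance mechanism. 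The $j=0$ mode does not appear in the expansion because, in the intended application, the translation parameter $\tau$ in $w^*=\xi_h(\cdot+\tau)$ is chosen precisely to kill its leading behavior, absorbing the $\xi_t$ Jacobi field.

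The main obstacles I expect are: (a) the explicit verification that $e^{-t}(1+\xi_t)$ annihilates $L_j$ for $j=1,\ldots,n$, which requires combining \eqref{3} with its derivative in a not-entirely-routine algebraic reduction; and (b) the two borderline resonances $\beta=1$ and $\beta=\rho_{n+1}$, where the Green's kernel integrals genuinely produce a factor of $t$. A secondary technical point is keeping uniform control of the implicit constants across all spherical-harmonic modes $j\ge n+1$ so that the series can be summed back into an angular estimate; this is handled by Sobolev regularity of $\phi$ and $r$ together with the uniform lower bound $\rho_j\ge\beta_*$ furnished by Lemma~\ref{la2}.
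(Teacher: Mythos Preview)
Your outline for the modes $j=1,\ldots,n$ and for the high modes $j\ge n+1$ is essentially correct and matches the paper's argument (the paper organizes the high-mode estimate slightly differently, multiplying \eqref{lindeca} by $\widehat\phi$ and integrating over $\mathbb S^{n-1}$ to obtain a scalar differential inequality for $y(t)=\|\widehat\phi(t,\cdot)\|_{L^2}$, which sidesteps the mode-by-mode summation issue you flag; but your Floquet-plus-summation route would also work).

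There is, however, a genuine gap at $j=0$. Proposition~\ref{la4} is a self-contained linear statement: $\phi$ is given, satisfies \eqref{lindeca} with decaying right-hand side, and you must show $\phi_0(t)\to 0$ at rate $e^{-\beta t}$. No parameter $\tau$ is being chosen here; that happens only in the later application. The difficulty is real: the homogeneous basis for $L_0$ consists of $\phi_0^+=\xi_h'$, which is bounded and periodic (not decaying), and $\phi_0^-=\partial_h\xi_h$, which grows \emph{linearly} in $t$ rather than exponentially. So the variation-of-constants integrals do not obviously produce $e^{-\beta t}$ decay, and one cannot simply ``absorb the $\xi_t$ Jacobi field'' without first explaining why its coefficient vanishes. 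The paper uses the (implicit) hypothesis $\phi_0(t)\to 0$ to force the coefficient of the periodic solution $\xi_h'$ to zero, then writes both Duhamel integrals from $t$ to $\infty$; the linear growth of $\partial_h\xi_h$ is handled by the identity $\partial_h\xi_h(t)=p(t)-\tfrac{T'(h)}{T(h)}\,t\,\xi_h'(t)$ with $p$ periodic, which after a rearrangement of the double integral yields the clean $e^{-\beta t}$ bound. You need to supply this step.
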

We now first prove Proposition~\ref{la1}, then provide a proof for Proposition~\ref{la4} and Theorem~\ref{hodthm},
while deferring the proof for Lemma~\ref{la3} to the appendix.
For $j\le n$, (i)---(iv) of Proposition~\ref{la1} follow from an  explicit solution basis to \eqref{*}; 
for $j\ge n+1$, the arguments in \cite{KMPS} 
relies on the
sign of the coefficient of the zeroth order term of $L_j$ to be negative. Our computations below  verify
the same properties for the $\sigma_k$ curvature problem when $k<n$, although we will only use
these properties for  the case $2k<n$ here.

Since $\xi(t)$ satisfies \eqref{3}, with $\sigma_k=2^{-k}\binom{n}{k}$, \eqref{3} becomes
\begin{equation}\label{krad}
2(1-\xi_t^2)^{k-1} \left[\frac{k}{n} \xi_{tt} + (\frac{1}{2} -\frac{k}{n})(1-\xi_t^2)\right]e^{2k\xi}
=1.
\end{equation}
Due to the translation invariance of \eqref{krad} in $t$, $\phi_0^+=\partial_t \xi(t)$ is a solution to
\eqref{*} for $\lambda_0=0$; since $\xi_{h}(t)$ is another family of solution to
\eqref{krad}, we find $\phi_0^-=\partial_h \xi_{h}(t)$ to be another solution to \eqref{*} for $\lambda_0=0$.
Differentiating the first integral $ e^{(2k-n)\xi}(1-\xi_t^2)^k=e^{-n\xi} +h$ with respect to $t$ and $h$, 
respectively, one finds that 
$\{ \phi_0^+(t), \phi_0^-(t)\}$ is linearly independent, thus
forms a solution basis to \eqref{*} for $\lambda_0=0$.

\begin{proof}[Proof of Proposition~\ref{la1}]
For $h>0$, and $\lambda_j=n-1$, which corresponds to $Y_j(\theta)=\theta_j$, we claim that
\[
\left[1-\partial_t \xi (t)\right] e^t  \quad \text{and} \quad 
\left[1+\partial_t \xi (t)\right] e^{-t} 
\]
form a basis to \eqref{*}. This is due the translation invariance of \eqref{1}: if $u(x)$ is a solution
to \eqref{1}, so is $u(x+a)$ for any $a \in \mathbb R^n$. In terms of $w(t,\theta)$, this means that
\[
w_a(t,\theta) := -\ln |x| -\frac{2}{n-2} \ln u (x+a)
\]
is a solution to \eqref{1w}. Thus $\partial_{a_j}|_{a=0} w_a(t,\theta) $ is a solution to  the linearized
equation of \eqref{1w}. But when $w(t,\theta)=\xi (t)$, we have
\[
\begin{split}
\partial_{a_j}|_{a=0} w_a(t,\theta)
&=
 -\frac{2}{n-2} \partial_{x_j} \ln u (x) 
=\partial_{x_j} \left[ \ln |x| + w(t, \theta)\right] \\
&=
\frac{x_j}{|x|^2} + \partial_t \xi (t) \frac{\partial t}{\partial x_j} 
= \left[1-\partial_t \xi (t)\right] e^t \theta_j.
\end{split}
\]
Thus $\left[1-\partial_t \xi (t)\right] e^t $ is a solution of 
\eqref{*} with $\lambda_j=n-1$.  Since we have normalized $\xi(t)$ such that it is even in $t$, 
we find that $\left[1+\partial_t \xi (t)\right] e^{-t} $ is another solution of 
\eqref{*} with $\lambda_j=n-1$. $\{ \left[1-\partial_t \xi (t)\right] e^t, \left[1+\partial_t \xi (t)\right] e^{-t}\}$ 
becomes linearly dependent only when they are identical (as they are both equal to $1$ at $t=0$), 
which is the case only when $\partial_t \xi (t) = \tanh (t)$ and $h=0$. 
When $h>0$ and $k<n$, one can use the asymptotic expansion in \cite{CHY05} to see that
$\left[1-\partial_t \xi (t)\right] e^t$ is exponentially growing. When
 $h>0$ and $2k \le n$, this can be seen more directly: the  solution $\xi(t)$ has the bound $C_h^{-1}\le
1\pm \partial_t \xi (t) \le C_h$ for some $C_h>0$,
so $\{ \left[1+\partial_t \xi (t)\right] e^{-t}, \left[1-\partial_t \xi (t)\right] e^t \}$ forms
a  solution basis for \eqref{*} with $\lambda_j=n-1$, with one exponentially decaying and the other one
 exponentially growing, and
the conclusion of the Proposition in the case $\lambda_j=n-1$ follows from the explicit basis.

For $\lambda_j \ge 2n$, we will verify that 
\begin{equation}\label{nup}
\text{the coefficient of $\phi_j$ in \eqref{*} has a negative
upper bound.}
\end{equation}
Assuming \eqref{nup} for now, we sketch 
a proof for properties (i)-(iv) of $L_j$ for the case $\lambda_j \ge 2n$.
The key is to check that for $0<\lambda$ small, 
$e^{\pm \lambda t}$ are supersolutions of $L_j[e^{\pm \lambda t}]\le0$.
This is because
\[
L_j[e^{\pm \lambda t}]
= \lambda^2 \pm\lambda \left[1-(n-1)\frac{C(t)}{A(t)}\right]\xi_t(t) -\lambda_j \frac{C(t)}{A(t)}+
n e^{-2k\xi(t)}(1-\xi_t^2)^{1-k},
\]
and it follows from \eqref{nup}  that for $\lambda_j \ge 2n$,
\[
-\lambda_j \frac{C(t)}{A(t)}+n e^{-2k\xi(t)}(1-\xi_t^2)^{1-k} \\
\]
has a negative upper bound. Furthermore, 
 using \eqref{krad}, 
we have
\[
\begin{split}
C(t)=&\frac{k-1}{n-1} \xi_{tt}(t) + \frac{n-2k+1}{n-1} \cdot \frac{1- |\xi_t(t)|^2}{2}\\
=& \frac{n(k-1)}{k(n-1)} \left[\frac{ e^{-2k\xi}}{2(1-\xi_t^2)^{k-1}} - \frac{n-2k}{2n} (1-\xi_t^2)\right]
+ \frac{n-2k+1}{2(n-1)} (1-\xi_t^2) \\
=& \frac{n(k-1)}{2k(n-1)} \frac{ e^{-2k\xi}}{(1-\xi_t^2)^{k-1}} +\frac{(n-k)(1-\xi_t^2)}{2k(n-1)}.
\end{split}
\]
Thus
\[
\begin{split}
\frac{C(t)}{A(t)} &= \frac{n(k-1)}{k(n-1)} \frac{ e^{-2k\xi}}{(1-\xi_t^2)^{k}} +\frac{(n-k)}{k(n-1)}\\
&= \frac{n(k-1)}{k(n-1)} \frac{ e^{-n\xi}}{ e^{-n\xi} +h} +\frac{(n-k)}{k(n-1)}
\end{split}
\]
is bounded from above. Here we used the first integral $ e^{(2k-n)\xi}(1-\xi_t^2)^k=e^{-n\xi} +h$
and as a consequence $\xi \ge 0$.

It is now clear that  we can choose $\lambda >0$ small to make $L_j[e^{\pm\lambda t}]<0$ for all $t \in \mathbb R$.

Now fix such a $\lambda >0$.
We claim that if $\phi (t)$ is a bounded solution of $L_j[\phi]=0$ on $\mathbb R^{\pm}$, then
\[
|\phi (t) | \le |\phi (0)| e^{-\lambda (\pm t)}\quad\text{for all $t \in \mathbb R^{\pm}$},
\]
which then implies (ii).
This is because for any $\epsilon >0$, $|\phi (0)| e^{-\lambda (\pm t)} + \epsilon e^{\lambda (\pm t)}$ is a supersolution
of $L_j$ on $\mathbb R^{\pm}$. So if  $\phi (t)$ is a bounded solution of $L_j[\phi]=0$ on $\mathbb R^{\pm}$,
then by comparison principle,
\[
|\phi (t) | \le |\phi (0)| e^{-\lambda  (\pm t)} + \epsilon e^{\lambda (\pm t)}\quad\text{for all $t \in \mathbb R^{\pm}$}.
\]
For any fixed $t \in \mathbb R^{\pm}$, since the above estimate holds for all $\epsilon >0$, we can send
$\epsilon$ to $0$ to verify our claim. (iii) now follows from (ii) and the maximum principle, and (iv) obviously
is a direct corollary of (iii).

Next,  any $L_j$ has a pair of linearly independent solution basis $\{\phi_1(t), \phi_2(t)\}$ on $\mathbb R$. If both
are bounded on  $\mathbb R^+$, and $a$ and $b$ are such that $a \phi_1(0)+ b  \phi_2(0)=0$, then
our claim implies that $a\phi_1(t) + b \phi_2(t) \equiv 0$ on  $\mathbb R^+$, contradicting their choice.

It remains to establish that there is a nontrivial solution of $L_j[\phi]=0$ bounded on $\mathbb R^+$.
Since we have verified  that $L_j$ is uniformly elliptic on $\mathbb R^+$
and satisfies the maximum principle there, we can establish the desired existence by a convergence 
argument for solutions which are constructed on a sequence of finite intervals that exhaust $\mathbb R^+$.

Finally we come back to verify \eqref{nup}.
Since $1\ge 1-\xi_t^2 >0$ by Theorem C (this is also true for $k=1$), we see that, when  $\lambda_j \ge 2n$,
 the coefficient of $\phi_j(t)$ in \eqref{*} is bounded from above by
\[
\begin{split}
&- \lambda_j\left[\frac{n(k-1)}{k(n-1)} \frac{ e^{-2k\xi}}{(1-\xi_t^2)^{k}} +\frac{(n-k)}{k(n-1)}\right]
+ \frac{ n e^{-2k\xi(t)}} {(1-\xi_t^2(t))^{k-1}}\\
\le & -2n \left[\frac{n(k-1)}{k(n-1)} \frac{ e^{-2k\xi}}{(1-\xi_t^2)^{k}} +\frac{(n-k)}{k(n-1)}\right]
+ \frac{ n e^{-2k\xi(t)}} {(1-\xi_t^2(t))^{k-1}}\\
=&- \frac{2 n e^{-2k\xi(t)}}{(n-1) (1-\xi_t^2)^{k}} \left[ \frac{n(k-1)}{k}- \frac{(n-1)(1-\xi_t^2)}{2}\right]
-\frac{2n(n-k)}{k(n-1)}\\
\le &-  \frac{2n e^{-2k\xi(t)}}{(n-1) (1-\xi_t^2)^{k}} \left[ \frac{n}{2}-\frac{n}{k} + \frac 12 
\right] -\frac{2n(n-k)}{k(n-1)} \\
< &- \frac{2n(n-k)}{k(n-1)}< 0,
\end{split}
\]
when $n> k\ge 2$; when $k=1$, the above estimate gives
\[
\begin{split}
&- \lambda_j\left[\frac{n(k-1)}{k(n-1)} \frac{ e^{-2k\xi}}{(1-\xi_t^2)^{k}} +\frac{(n-k)}{k(n-1)}\right]
+ \frac{ n e^{-2k\xi(t)}} {(1-\xi_t^2(t))^{k-1}}\\
\le &- \frac{2 n e^{-2k\xi(t)}}{(n-1) (1-\xi_t^2)^{k}} \left[ \frac{n(k-1)}{k}- \frac{(n-1)(1-\xi_t^2)}{2}\right]
-\frac{2n(n-k)}{k(n-1)}\\
= & n\left[ e^{-2\xi (t)} -2\right] 
\le -n,
\end{split}
\]
as $\xi (t) \ge 0$, which follows from the first integral
\[
e^{-2\xi (t)} + h e^{(n-2)\xi (t)} = 1-\xi_t^2(t) \le 1
\]
with $h\ge 0$; while for $k=n$ and $h=0$,  the above estimate gives
\[
\begin{split}
&- \lambda_j\left[\frac{n(k-1)}{k(n-1)} \frac{ e^{-2k\xi}}{(1-\xi_t^2)^{k}} +\frac{(n-k)}{k(n-1)}\right]
+ \frac{ n e^{-2k\xi(t)}} {(1-\xi_t^2(t))^{k-1}}\\
\le &-  \frac{2n e^{-2n\xi(t)}}{ (1-\xi_t^2)^{n}} \left[1 -\frac{1-\xi_t^2}{2} \right] 
\le 
- \frac{n e^{-2n\xi(t)}}{(1-\xi_t^2(t))^{n}}\\
= &- n,
\end{split}
\]
from the first integral
\[
e^{n \xi(t)} (1-\xi_t^2(t))^{n} - e^{-n \xi(t)} =h=0.
\]

\end{proof}

Next is a proof for Proposition~\ref{la4}.
\begin{proof}[Proof for Proposition~\ref{la4}] 
Define
\[
\wph = \phi (t,\theta) - \sum_{j=0}^n \pi_j[\phi (t,\theta)]Y_j(\theta),
\]
where $\phi_j(t) :=  \pi_j[\phi (t,\theta)]$ is the $L^2$ orthogonal projection of 
$\phi (t,\theta)$ onto $\text{span}\{Y_j(\theta)\}$.
Then
\begin{equation}\label{ortho1}
\int_{\mathbb S^{n-1}} \wph Y_j(\theta)\, d\theta= \int_{\mathbb S^{n-1}} \grad \wph \cdot \grad Y_j(\theta)\, d\theta
= \int_{\mathbb S^{n-1}} \Delta_{\theta} Y_j(\theta)\wph \, d\theta=0,
\end{equation}
for $j=0, \cdots, n$.  As a consequence,
\begin{equation}\label{ortho2}
\left\{
\begin{aligned}
\int_{\mathbb S^{n-1}} \Delta_{\theta} \phi (t,\theta) \wph \, d\theta&= 
- \int_{\mathbb S^{n-1}}|\grad_{\theta} \wph|^2\, d\theta, \\
\int_{\mathbb S^{n-1}}  \phi_t (t,\theta) \wph \, d\theta&= \int_{\mathbb S^{n-1}} \widehat \phi_t(t,\theta) \wph \, d\theta
= \frac 12 \frac{d}{dt} \int_{\mathbb S^{n-1}} |\wph|^2 \, d\theta.\\
\end{aligned}
\right.
\end{equation}
In the following
we will prove separately the expected decays for $\widehat \phi (t,\theta)$ and 
$\phi_j(t) := \pi_j[\phi (t,\theta)]$, for $j=0,1,\cdots, n$.
We first  estimate $\phi_j(t)=\pi_j[\phi(t,\theta)]$ for $j=0,\cdots, n$. Multiplying both sides
of \eqref{lindeca}  by
\[
\left\{\frac{(1- |\xi_t (t)|^2)^{k-2}}{2^{k-2}}\binom{n-1}{k-1}  A(t)\right\}^{-1} Y_j(\theta)
\]
and integrating over $\theta \in \mathbb S^{n-1}$, we obtain
\begin{equation}\label{eq1}
 \phi_j^{''}(t) +\frac{B(t)}{A(t)} \phi_j^{'}(t) +
\left[\frac{ne^{-n\xi(t)}}{e^{-n\xi(t)} +h} (1-\xi_t^2(t))  -\lambda_j \frac{C(t)}{A(t)} \right]  \phi_j (t)
= \widehat r_j(t)
\end{equation}
where
\[
\widehat r_j(t) = \int_{\mathbb S^{n-1}} \widehat r(t,\theta) Y_j(\theta)\,d\theta.
\]
For $j=1,\cdots, n$, $\lambda_j=n-1$, and
$\phi_1^-(t) := e^{-t}(1+\xi^{'}(t))$, $\phi_1^+(t) := e^{t}(1-\xi^{'}(t))$ form a solution basis
to the homogeneous equation
\[
 \phi^{''}(t) +\frac{B(t)}{A(t)} \phi^{'}(t) +
\left[\frac{ne^{-n\xi(t)}}{e^{-n\xi(t)} +h} (1-\xi_t^2(t))  -(n-1)\frac{C(t)}{A(t)} \right]  \phi (t)
=0.
\] 
Since $\phi_j(t)$ is a solution to \eqref{eq1} and $\phi_j(t) \to 0$ as $t\to \infty$, by the
variation of constant formula,
\begin{equation}\label{var1}
\phi_j(t)=c \phi_1^-(t) + \phi_1^-(t) \int_0^t \frac{\phi_1^+(s)  \widehat r_j(s)}{W_1(s)}ds
+ \phi_1^+(t) \int_t^{\infty}  \frac{\phi_1^-(s)  \widehat r_j(s)}{W_1(s)}ds,
\end{equation}
for some constant $c$, where
\[
W_1(s)=\phi_1^+(t) \phi_1^{-'}(t)-\phi_1^-(t)\phi_1^{+'}(t)
\]
is the Wronskian of $\{\phi_1^-(t), \phi_1^+(t)\}$, and satisfies
\[
W_1^{'}(s)=-\frac{B(s)}{A(s)}W_1(s).
\]
Integrating this equation out, using
\[ 
\frac{B(s)}{A(s)}=\left(\frac{2k-n}{k}-\frac{n(k-1)}{k}\frac{e^{-n\xi(s)}}{e^{-n\xi(s)}+h}\right)\xi^{'}(s),
\]
we find
\[
W_1(s)=(\text{const.}) e^{\frac{n-2k}{k}\xi(s)} \left(e^{-n\xi(s)}+h\right)^{-\frac{k-1}{k}}
\]
is a periodic function, having a positive upper and lower bound.
According to our assumption on the decay rate of $r(t,\theta)$, we have
\[
|r_j(s)|\le C e^{-\beta t}.
\]
Thus
\[
\left| \int_t^{\infty}  \frac{\phi_1^-(s)  \widehat r_j(s)}{W_1(s)}ds\right| \lesssim \int_t^{\infty} e^{-(1+\beta)s}ds
 \lesssim e^{-(1+\beta)t},
\]
from which we deduce that
\[
\left| \phi_1^+(t) \int_t^{\infty}  \frac{\phi_1^-(s)  \widehat r_j(s)}{W_1(s)}ds\right| \lesssim
e^{- \beta t}.
\]
When $\beta \ne 1$, we also have

\[
\left|\int_0^t \frac{\phi_1^+(s)  \widehat r_j(s)}{W_1(s)}ds\right| \lesssim
\int_0^t e^{(1-\beta)s}ds \lesssim e^{(1-\beta)t},
\]
from which  we deduce that
\[
\left| \phi_1^-(t) \int_0^t \frac{\phi_1^+(s)  \widehat r_j(s)}{W_1(s)}ds\right| \lesssim
e^{- \beta t}.
\]
Putting these estimates into \eqref{var1}, we have
\[
\left|\phi_j(t)-ce^{-t}(1+\xi^{'}(t))\right|\lesssim e^{- \beta t}.
\]
When $\beta =1$, \eqref{var1} gives the modified estimate.

For $j=0$, $\phi_0^{+}(t):=\xi_h^{'}(t)$ and $\phi_0^{-}(t):= \partial_h \xi_h(t)$ also form a solution basis to
the homogeneous equation
\[
 \phi^{''}(t) +\frac{B(t)}{A(t)} \phi^{'}(t) +
\left[\frac{ne^{-n\xi(t)}}{e^{-n\xi(t)} +h} (1-\xi_t^2(t)) \right]  \phi (t)
=0.
\]
Since $\phi_0(t)$ is a solution to \eqref{eq1} and $\phi_0(t) \to 0$ as $t\to \infty$,
 a variant of \eqref{var1} gives:
\begin{equation}\label{var0}
\phi_0(t)=-\phi_0^-(t) \int_t^{\infty} \frac{\phi_0^+(s)  \widehat r_0(s)}{W_0(s)}ds
+ \phi_0^+(t) \int_t^{\infty}  \frac{\phi_0^-(s)  \widehat r_0(s)}{W_0(s)}ds,
\end{equation}
where
\[
W_0(s)=\phi_0^+(t) \phi_0^{-'}(t)-\phi_0^-(t)\phi_0^{+'}(t)
\]
is the Wronskian of $\{\phi_0^-(t), \phi_0^+(t)\}$, and also satisfies
\[
W_0^{'}(s)=-\frac{B(s)}{A(s)}W_0(s).
\]
Thus, as for $W_1(s)$,  $W_0(s)$ is a periodic function, having a positive upper and lower bound.
Let $T(h)$ denotes the minimal period of the solution $\xi_h(t)$. Then
$\xi_h(t+T(h))=\xi_h(t)$. Differentiating in $h$, we obtain
\begin{equation}\label{perd}
\phi_0^{-}(t+T(h))+ T^{'}(h) \xi^{'}_h(t+T(h))=\phi_0^{-}(t),
\end{equation}
which implies that $\phi_0^{-}(t)$ grows in $t$ at most linearly. Then 
\eqref{var0} would imply that $|\phi_0(t)|\lesssim te^{-\beta t}$.  This is not quite as claimed,
but is good enough to be used in our iterative argument in proving \eqref{goal}. To obtain
the more precise estimate \eqref{app}, note that \eqref{perd} implies that
\[
p(t) :=\phi_0^{-}(t) +\frac{T^{'}(h)}{T(h)} t \xi^{'}_h(t) = \phi_0^{-}(t) +\frac{T^{'}(h)}{T(h)} t \phi_0^+(t)
\]
is $T(h)$ periodic---such behavior can also be deduced from the application of Floquet theory to this case.  
Thus we can express $\phi_0^{-}(t)$ as
$p(t)- \frac{T^{'}(h)}{T(h)} t \phi_0^+(t)$ in \eqref{var0} to obtain
\[
\begin{split}
\phi_0(t)=&-\left(p(t)- \frac{T^{'}(h)}{T(h)} t\phi_0^+(t)\right) 
\int_t^{\infty} \frac{\phi_0^+(s)  \widehat r_0(s)}{W_0(s)}ds
+ \phi_0^+(t) \int_t^{\infty}  \frac{\left(p(s)- \frac{T^{'}(h)}{T(h)} s\phi_0^+(s)\right)  \widehat r_0(s)}{W_0(s)}ds\\
=& - p(t) \int_t^{\infty} \frac{\phi_0^+(s)  \widehat r_0(s)}{W_0(s)}ds 
+ \phi_0^+(t) \int_t^{\infty}  \frac{p(s)\widehat r_0(s)}{W_0(s)}ds\\
& -\frac{T^{'}(h)}{T(h)} \phi_0^+(t) \int_t^{\infty}
\int_s^{\infty} \frac{\phi_0^+(\tau)  \widehat r_0(\tau)}{W_0(\tau)}d\tau ds,
\end{split}
\]
from which follows $|\phi_0(t)|\lesssim e^{-\beta t}$.

Finally, we estimate the decay rate of $\widehat \phi (t,\theta)$. 
This part is analogous to an approach in \cite{TZ06}.
Multiplying both sides of  \eqref{lindeca} by
\[
\left\{\frac{(1- |\xi_t (t)|^2)^{k-2}}{2^{k-2}}\binom{n-1}{k-1}  A(t)\right\}^{-1} \wph,
\]
integrating over $\theta \in \mathbb S^{n-1}$ and using \eqref{ortho1} and \eqref{ortho2}, we find
\begin{equation}\label{hord}
\begin{split}
&\int_{\mathbb S^{n-1}} \left\{ \widehat \phi_{tt} (t,\theta) \wph\ + \frac{B(t)}{A(t)} \widehat \phi_t(t,\theta) 
\wph + 
\frac{ne^{-n\xi(t)}}{e^{-n\xi(t)} +h} (1-\xi_t^2(t)) |\wph|^2 \right\} \,d\theta \\
&- \frac{C(t)}{A(t)} \int_{\mathbb S^{n-1}}|\grad_{\theta} \wph|^2 \,d\theta=
\int_{\mathbb S^{n-1}} \widehat r(t,\theta) \wph \,d\theta,
\end{split}
\end{equation}
where
\[
\widehat r(t,\theta) = \left\{\frac{(1- |\xi_t (t)|^2)^{k-2}}{2^{k-2}}\binom{n-1}{k-1}  A(t)\right\}^{-1} r(t,\theta)
\asymp r(t,\theta).
\]
Defining
\[
y(t)= \sqrt{\int_{\mathbb S^{n-1}} |\wph|^2  \,d\theta},
\]
then 
\[
y^{'}(t)= \int_{\mathbb S^{n-1}} \widehat \phi_t(t,\theta) \wph \,d\theta/y(t), \quad \text{whenever $y(t)>0$,}
\]
and
\[
y(t) y^{''}(t)=\int_{\mathbb S^{n-1}} \left\{\widehat \phi_{tt} (t,\theta) \wph +|\widehat \phi_t(t,\theta)|^2 \right\} \,d\theta 
- |y^{'}(t)|^2.
\]
Cauchy-Schwarz inequality implies that
\[
|y^{'}(t)|^2 \le \int_{\mathbb S^{n-1}} |\widehat \phi_t(t,\theta)|^2 \,d\theta.
\]
Using these relations and 
\[
 \int_{\mathbb S^{n-1}}|\grad_{\theta} \wph|^2  \,d\theta \ge 2n  \int_{\mathbb S^{n-1}}|\wph|^2  \,d\theta
\]
into \eqref{hord}, we obtain
\[
y(t) y^{''}(t) +\frac{B(t)}{A(t)} y(t) y^{'}(t) +
\left[\frac{ne^{-n\xi(t)}}{e^{-n\xi(t)} +h} (1-\xi_t^2(t))  -2n\frac{C(t)}{A(t)} \right] y^2(t) 
\ge -||\widehat r(t,\cdot)||_{L^2(\mathbb S^{n-1})}y(t),
\]
whenever $y(t)>0$, from which we deduce
\begin{equation}\label{hordod}
 y^{''}(t) +\frac{B(t)}{A(t)} y^{'}(t) +
\left[\frac{ne^{-n\xi(t)}}{e^{-n\xi(t)} +h} (1-\xi_t^2(t))  -2n\frac{C(t)}{A(t)} \right] y (t)
\ge -||\widehat r(t,\cdot)||_{L^2(\mathbb S^{n-1})},
\end{equation}
whenever $y(t)>0$.
According to our assumption on $r(t,\theta)$, we have
\[
||\widehat r(t,\cdot)||_{L^2(\mathbb S^{n-1})} \le C e^{-\beta t}
\]
for some constant $C>0$.  By \eqref{odetrans},
\[
\begin{split}
&\left\{\partial_{tt} +\frac{B(t)}{A(t)}\partial_t +\left[ - 2n \frac{C(t)}{A(t)} +
\frac{ne^{-n\xi(t)}}{e^{-n\xi(t)} +h} (1-\xi_t^2(t))\right]\right\} (V^{-1}(t)e^{-\beta t}) \\
&\le \left(\beta^2+E\right)V^{-1}(t) e^{-\beta t}\le -\epsilon  V^{-1}(t) e^{-\beta t}, 
\end{split}
\]
for some $\epsilon>0$ when $\beta < \beta_* $. So $z(t) :=C \epsilon^{-1} (\max V)V^{-1}(t)e^{-\beta t}$
satisfies
\begin{equation}\label{com}
\left\{\partial_{tt} +\frac{B(t)}{A(t)}\partial_t +\left[ - 2n \frac{C(t)}{A(t)} +
\frac{ne^{-n\xi(t)}}{e^{-n\xi(t)} +h} (1-\xi_t^2(t))\right]\right\} (z(t)-y(t)) \le 0,
\end{equation}
whenever $y(t)>0$.
We also know that $y(t)\to 0$ as $t\to \infty$. We may choose $C>0$ large so that $z(0)\ge y(0)$. Then
we claim that $z(t)-y(t) \ge 0$ for all $t\ge 0$, for, if not, $\min (z(t)-y(t)) <0$ is finite, and
is attained at some $t_*$, then $y(t_*)> z(t_*)>0$, so \eqref{com} holds at
$t=t_*$, and  $\partial_t (z(t)-y(t)) |_{t=t_*}=0$, as well as
$\partial_{tt} (z(t)-y(t)) |_{t=t_*} \ge 0$. This contradicts \eqref{com}.
Thus we conclude
\[
 \sqrt{\int_{\mathbb S^{n-1}} |\wph|^2  \,d\theta}= y(t) \le C \epsilon^{-1} (\max V) V^{-1}(t)e^{-\beta t}.
\]
We can now bootstrap this integral estimate to obtain a pointwise decay estimate
\[
|\wph| \lesssim e^{-\beta t}.
\]
When $\beta \ge \beta_*$, we can simply split those components $\phi_j$ of $\phi$ with $\lambda_j=2n$ from $\wph$,
and estimate them as we did for $\phi_j$, $j=0,\cdots, n$, and estimate $\wph$ with an improved
exponential decay rate. 
\end{proof}
We now provide a proof for Theorem~\ref{hodthm}. Our proof is very much like the one in \cite{KMPS} for
the $k=1$ case, once we have obtained the needed linear analysis.
\begin{proof}[Proof of Theorem~\ref{hodthm}]
Our starting point is still
\[
L_{\xi_h(\cdot+\tau)}(\phi) +Q(\phi)+2k c e^{-2k\xi_h(t+\tau)}\phi(t,\theta)=0,
\]
and our premise is:
\begin{equation}
|Q(\phi)| \lesssim e^{-2\alpha t} \quad \text{whenever we have }|\phi,
 \partial \phi, \partial^2 \phi |\lesssim e^{-\alpha t}.
\end{equation}
 We already established

\noindent {\bf Step 1.} For some $\alpha_0 >0$, $|\phi, \partial \phi, \partial^2 \phi|\lesssim e^{-\alpha_0 t}$.

If $\alpha_0 \ge \rho_{n+1}$, we stop and have now proved 
$|w(t,\theta)-\xi_h(t+\tau)|=|\phi(t,\theta)| \lesssim e^{-\rho_{n+1} t}$, where
$ \rho_{n+1} > \sqrt{2}$; if $1< \alpha_0  <  \rho_{n+1}$, we jump to {\bf Step 3};
if $\alpha_0  \le 1$, we move onto

\noindent {\bf Step  2.} Recall that we now have $|Q(\phi)|\lesssim e^{-2 \alpha_0 t}$. 
If  $2 \alpha_0 > \rho_{n+1}$, then we can apply Proposition~\ref{la4} directly to conclude our proof;
If $1< 2 \alpha_0 \le \rho_{n+1}$, then  we certainly still have 
$|Q(\phi)|\lesssim e^{-2 \alpha t}$ for some $1< 2 \alpha < \rho_{n+1}$ and
can apply  Proposition~\ref{la4} to imply that
\begin{equation}\label{tep2}
|w(t,\theta)-\xi_h(t+\tau)- \sum_{j=1}^n a_j e^{-(t+\tau)}(1+\xi^{'}_h(t+\tau))Y_j(\theta)|\lesssim e^{- 2 \alpha t},
\end{equation}
for some constants $a_j$ for $j=1,\cdots, n$, and jump to {\bf Step 3};
if $ 2 \alpha_0 \le 1$, we may take $\alpha_0$ to satisfy $2 \alpha_0 <1$ and apply
 Proposition~\ref{la4} to imply that
\[
|\phi(t,\theta)- \sum_{j=1}^n a_j e^{-(t+\tau)}(1+\xi^{'}_h(t+\tau))Y_j(\theta)|\lesssim  e^{- 2 \alpha_0 t}
\]
for some constants $a_j$ for $j=1,\cdots, n$. This certainly implies that
\begin{equation}\label{start}
|\phi(t,\theta)|\lesssim  e^{- 2 \alpha_0 t}.
\end{equation}
Next we use higher derivative estimates for $w(t,\theta)$ and 
$\xi_h(t+\tau)$ and interpolation with \eqref{start} to obtain
\[
|\phi, \partial \phi, \partial^2 \phi| \lesssim e^{-2\alpha' t}
\]
for any $\alpha'< \alpha_0$.
Now we go back to the beginning
of step 2 and repeat the process with a new $\alpha_1>\alpha_0$ to replace the $\alpha_0$ there, say,
$\alpha_1=1.8 \alpha_0$.
After a finite number of steps, we will reach a stage where $2\alpha >1$ and ready to move onto

\noindent {\bf Step  3.} At this stage, we have $|\phi(t,\theta)|\lesssim e^{-t}$.
Repeating the last part of {\bf Step  2} involving the derivative estimates  for $w(t,\theta)$ and
$\xi_h(t+\tau)$ to bootstrap the estimate for $Q(\phi)$ to $|Q(\phi)| \lesssim e^{-\alpha t} $,
with $\alpha$ can be as close to $2$ as one needs. Then, depending on whether $\rho_{n+1} \ge 2$ or otherwise,
one can apply Proposition~\ref{la4} to obtain \eqref{app} or \eqref{app2}. In the first case, we can continue
the iteration until $2\alpha >2$. But due to the presence of  
$e^{-(t+\tau)}(1+\xi^{'}_h(t+\tau))Y_j(\theta)$ in the estimate for $\phi$, the estimate for
$Q(\phi)$ can not be better than $e^{-2t}$. This explains the appearance of $\text{min}\{2, \rho_{n+1}\}$
in \eqref{goal}.
\end{proof}

\section{Proof of Theorem~\ref{odethm}}
\begin{remark}
First, some comments on the assumptions in Theorem~\ref{odethm}.
\begin{enumerate}
\item[(a).] 
Assumptions \eqref{nond} and \eqref{afi} imply that whenever $\beta^{'}(\tau_j)=0$ and $|\beta(\tau_j)-m|<
\epsilon_1$, then $|\beta(\tau_j)-m|$ is in fact bounded above by $e_2(\tau_j)^{1/l}$.
\item[(b).] In the case that $\psi$ is   non-constant, it follows that
\[
|f(0,m)| = a>0.
\]
Thus there exists $0<\epsilon_2 \le \epsilon_1$ such that
\begin{equation}\label{flow}
|f(x,y)| \ge 3a/4, \quad \text{for all $(x,y)$ with $|x|+|y-m| < \epsilon_2$.}
\end{equation}
Let $T_0$ be such that $|e_1(t)|< a/4$ for $t \ge T_0$. Then
\begin{equation}\label{flowb}
|\beta^{''}(t)|\ge a/2,  \quad \text{whenever $|\beta^{'}(t)|+|\beta(t)-m|  < \epsilon_2$ and $t\ge T_0$.}
\end{equation}
\item[(c).] By linearization at $\psi (t_{**}+\cdot)$, there exists $B>0$ depending on $f$,  $T$ and
the upper bound of $|\beta(\cdot)|+|\beta^{'}(\cdot)|$  such
that
\begin{equation}\label{lin}
\begin{split}
&|\beta(t_{*}+\tau)-\psi(t_{**} +\tau)|+|\beta^{'}(t_{*}+\tau)-\psi^{'}(t_{**} +\tau)|\\
\le& B \left(|\beta(t_{*})-\psi(t_{**}) |+|\beta^{'}(t_{*})-\psi^{'}(t_{**})|+\max_{|t-t_{*}|\le 2T} 
|e_1(t)| \right)
\end{split}
\end{equation}
for all $-2T\le \tau \le 2T$. One ingredient of our proof in case (ii) is to find a sequence of $\tau_j \to \infty$
with $\tau_{j+1}-\tau_j\approx T$ as $j\to \infty$, such that $\beta^{'}(\tau_j)=0$ and $|\beta(\tau_j)-m| 
\to 0$ as $j\to \infty$. Then applying \eqref{lin} to $\beta(\tau_j+\tau)$ and $\psi(\tau)$
would imply that
\[
|\beta(\tau_j+\tau)-\psi(\tau)|+|\beta^{'}(\tau_j+\tau)-\psi^{'}(\tau)|
\le  B \left(|\beta(\tau_j)-m|+ \max_{0\le \tau\le 2T}|e_1(\tau_j+\tau)|\right),
\]
for $0\le \tau \le 2T$.
\end{enumerate}
\end{remark}
Here is an outline of the main steps in our proof: we first use \eqref{appro2}, \eqref{lin} and 
\eqref{flowb} to deduce that $\beta (t)$ will have critical point for large $t$ with its critical value close to $m$;
then use (a) to prove that the difference between the critical value of $\beta (t)$ with $m$ is actually bounded above by
$e_2(t)^{1/l}$; then we can iterate this argument indefinitely and account for the possible time shift between 
consecutive times that  $\beta (t)$ attains a critical value near  $m$.
We can now put the ingredients together to provide a complete proof.
\begin{proof} First, for some $1/2>\kappa>0$ to be determined, 
by \eqref{appro2} and \eqref{lin}  there exists $t_{i_0}>T_0$ such that
\begin{equation}\label{lin2}
|\beta(t_{i_0}+\tau)-\psi(-s +\tau)|+|\beta^{'}(t_{i_0}+\tau)-\psi^{'}(-s +\tau)|
< \kappa \epsilon_2 \text{    for $|\tau |\le 2T$.} 
\end{equation}
First, we will dispose of case (i): when $\psi (t) \equiv m$ is a constant,
\eqref{nond1} and \eqref{afi} imply that
\begin{equation}\label{case 1}
|\beta^{'}(t)|^l+|\beta(t)-m|^l\le A^{-1} e_2(t),
\end{equation}
as long as $|\beta^{'}(t)|+ |\beta(t)-m| \le \epsilon_1$. This, together with \eqref{lin2}, apparently implies
that \eqref{case 1} continues to hold for all $t\ge T_0$.
So we are left to deal with the case that $\psi (t)$ is non-constant.
Noting that $\psi(0)=m$ and $\psi^{'}(0)=0$, we have
by \eqref{lin2} that
\[
|\beta(t_{i_0}+s)-m|+ |\beta^{'}(t_{i_0}+s)| < \kappa \epsilon_2.
\]
We now prove that there exists $\delta_0$ with $|\delta_0|\le 2a^{-1}|\beta^{'}(t_{i_0}+s)|$ such that
\begin{equation}\label{ind0}
\beta^{'}(t_{i_0}+s +\delta_0)=0.
\end{equation}
Let $\Lambda= \sup_{\mathbb R}\{|\psi^{''}(t)|, |\psi^{'}(t)|\}$. Then
\begin{equation}\label{lin3}
|\psi (\tau )-m| +|\psi^{'}(\tau)| \le  2 \Lambda |\tau| < \epsilon_2/2,
\end{equation}
for $|\tau|< \epsilon_2/(4\Lambda)$. 
 Together with \eqref{lin2} and \eqref{lin3}, we know
\[
|\beta(t_{i_0}+s+\tau)-m|+ |\beta^{'}(t_{i_0}+s+\tau)| < \epsilon_2, \quad \text{for 
$|\tau|< \epsilon_2/(4\Lambda)$}.
\] 
Thus, by  \eqref{flowb}, 
we have $|\beta^{''}(t_{i_0}+s+\tau)| \ge a/2$ for $|\tau|< \epsilon_2/(4\Lambda)$. Then
by elementary calculus, there exists $\delta_0$ such that $\beta^{'}(t_{i_0}+s+\delta_0)=0$ and
\begin{equation}
|\delta_0| \le 2a^{-1} |\beta^{'}(t_{i_0}+s)|\le 2a^{-1} \kappa  \epsilon_2 
< \epsilon_2/(4\Lambda),
\end{equation}
provided $\kappa$ is chosen to satisfy the last inequality above. We fix such a $\kappa$ now. 
Set $\tau_0= t_{i_0}+s+ \delta_0$. Note that we have $|\beta (\tau_0)-m|< \epsilon_2$. Thus
from assumption \eqref{nond}, we have
\[
\begin{split}
A|\beta(\tau_0)-m|^l &= A|\beta(\tau_0)-\psi(0)|^l\\
&\le |H(0, \beta(\tau_0))- H(0, \psi(0))| \\
&= |H(\beta^{'}(\tau_0), \beta(\tau_0))- H(\psi^{'}(0), \psi(0))|\\
&= |H(\beta^{'}(\tau_0), \beta(\tau_0))-0|\\
&\le e_2(\tau_0),
\end{split}
\]
which implies that
\begin{equation}\label{ind0b}
|\beta(\tau_0)-m| \le \left(\frac{ e_2(\tau_0)}{A}\right)^{1/l}.
\end{equation}
Next we apply \eqref{lin} to $\beta(\tau_0+\tau)$ and $\psi (\tau)$ to obtain
\begin{equation}\label{ind1}
\begin{split}
&|\beta(\tau_0+\tau)-\psi (\tau)|+ |\beta^{'}(\tau_0+\tau)-\psi^{'} (\tau)| \\
\le& B \left( |\beta(\tau_0)-m| + \max_{\tau_0\le t \le \tau_0+2T} |e_1(t)| \right) \\
\le&  B \left( \left(\frac{ e_2(\tau_0)}{A}\right)^{1/l} + \max_{\tau_0\le t \le \tau_0+2T} |e_1(t)| \right)\\
\end{split}
\end{equation}
for $0\le \tau \le 2T$.  Repeating the above argument, and in choosing $T_0$ also make sure that
\[
B \left( \left(\frac{ e_2(\tau)}{A}\right)^{1/l} + \max_{\tau\le t \le \tau+2T} |e_1(t)| \right)
< \kappa \epsilon_2
\]
for $\tau \ge T_0$, we obtain
$\delta_1$ such that $\tau_1 = \tau_0+T+\delta_1$ satisfies
\begin{gather}\label{ind2}
\beta^{'}(\tau_1)=0, \\
|\delta_1| \le 2a^{-1} |\beta^{'}( \tau_0+T)|\le  2a^{-1} 
 B \left( \left(\frac{ e_2(\tau_0)}{A}\right)^{1/l} + \max_{\tau_0\le t \le \tau_0+2T} |e_1(t)| \right), \\
|\beta(\tau_1)-m|  \le \left(\frac{ e_2(\tau_1)}{A}\right)^{1/l}.
\end{gather}
We can now inductively find $\tau_{j}= \tau_{j-1}+T+\delta_j$ such that
\begin{gather}
\beta^{'}(\tau_j)=0, \\
|\delta_j| \le 2a^{-1} |\beta^{'}( \tau_{j-1}+T)|\le  2a^{-1} 
 B \left( \left(\frac{ e_2(\tau_{j-1})}{A}\right)^{1/l} + \max_{\tau_{j-1}\le t \le \tau_{j-1}+2T} 
|e_1(t)| \right), \label{del} \\
|\beta(\tau_j)-m|  \le \left(\frac{ e_2(\tau_j)}{A}\right)^{1/l},\\
\begin{split}\label{ind3}
&|\beta(\tau_j+\tau)-\psi (\tau)|+ |\beta^{'}(\tau_j+\tau)-\psi^{'} (\tau)| \\
\le& B \left( |\beta(\tau_j)-m| + \max_{\tau_j\le t \le \tau_j+2T} |e_1(t)| \right) \\
\le&  B \left( \left(\frac{ e_2(\tau_j)}{A}\right)^{1/l} + \max_{\tau_j\le t \le \tau_j+2T} |e_1(t)| \right)\\
\end{split}
\end{gather}
for $0\le \tau \le 2T$.
Set $s_j = \tau_j - jT$. Then $s_j=s_{j-1}+\delta_j$, and due to estimate \eqref{del} and assumption \eqref{deca}
$s_{\infty}=\lim_{j \to \infty} s_j$ exists and
equals $s_0 + \sum_{j=1}^{\infty}\delta_j$. \eqref{ind3} can be rewritten, with $t=\tau_j+\tau$, as
\[
\begin{split}
&|\beta(t)-\psi (t-s_j)|+|\beta^{'}(t)-\psi^{'} (t-s_j)| \\
=&|\beta(t)-\psi (t-\tau_j)|+|\beta^{'}(t)-\psi^{'} (t-\tau_j)| \\
\le & B \left( \left(\frac{ e_2(\tau_j)}{A}\right)^{1/l} + \max_{\tau_j\le t \le \tau_{j+1}} |e_1(t)| \right)\\
\le & B \int_{\tau_j-1}^{\infty} \left( \left(\frac{ e_2(t^{'})}{A}\right)^{1/l} + 
\max_{ t^{'} \le \tau } |e_1(\tau)| \right) dt^{'}
\end{split}
\]
for $ \tau_j \le t \le \tau_{j+1}$, which further implies that
\begin{equation}\label{fe}
\begin{split}
&|\beta(t)-\psi (t-s_{\infty})|+|\beta^{'}(t)-\psi^{'} (t-s_{\infty})|\\
\le & |\beta(t)-\psi (t-s_j)|+|\beta^{'}(t)-\psi^{'} (t-s_j)| +
      |\psi (t-s_{\infty}) -\psi (t-s_j)| +|\psi^{'} (t-s_{\infty})-\psi^{'} (t-s_j)| \\
\le & B \left( \left(\frac{ e_2(\tau_j)}{A}\right)^{1/l} + \max_{\tau_j\le t \le \tau_{j+1}} |e_1(t)| \right) +
\Lambda |s_{\infty}-s_j| \\
\le &  B \left( \left(\frac{ e_2(\tau_j)}{A}\right)^{1/l} + \max_{\tau_j\le t \le \tau_{j+1}} |e_1(t)| \right) +
 \Lambda \sum_{k=j+1}^{\infty} |\delta_k|\\
\le & C \int_{\tau_j-1}^{\infty} \left( \left(\frac{ e_2(t^{'})}{A}\right)^{1/l} + 
\max_{t^{'}  \le \tau } |e_1(\tau)| \right) dt^{'}
\end{split}
\end{equation}
for some constant $C>0$ and $ \tau_j \le t \le \tau_{j+1}$.
\eqref{del} and our assumption \eqref{deca} imply that the expression on the right side of the above
inequality tends to $0$ as $j\to \infty$, thus proving \eqref{conc}.
\end{proof}

\section{Appendix}

\begin{proof}[Proof of Lemma~\ref{la3}]
Introduce a new variable $\psi (t) =V(t)\phi(t)$ for some $V(t)$ to be chosen. Then
\[
\begin{split}
&V(t) {L}_j[\phi]\\
= &\psi_{tt}(t)+\left\{ \left[1-(n-1)\frac{C(t)}{A(t)}\right]\xi_t(t)-2\frac{V^{'}(t)}{V(t)}\right\} \psi_t(t) \\
&+ \left\{2 \frac{|V^{'}(t)|^2}{V(t)^2}-\frac{V^{''}(t)}{V(t)}- \frac{V^{'}(t)}{V(t)} \left[1-(n-1)\frac{C(t)}{A(t)}\right]\xi_t(t)
-\lambda_j \frac{C(t)}{A(t)} +\frac{ne^{-n\xi(t)}}{e^{-n\xi(t)} +h}(1-\xi_t^2(t))  \right\} \psi(t).
\end{split}
\]
Choose $V(t)$ such that
$\displaystyle{
 \left[1-(n-1)\frac{C(t)}{A(t)}\right]\xi_t(t)-2\frac{V^{'}(t)}{V(t)}=0.
}$
This amounts to
\[
\begin{split}
\left(2 \ln V(t)\right)_t=& \left[1-\frac{n-k}{k}-\frac{n(k-1)e^{-n\xi(t)}}{ k(e^{-n\xi(t)} +h)}\right]\xi_t(t) \\
=& \left[ (2-\frac nk) \xi(t)+\frac{k-1}{k} \ln \left( e^{-n\xi(t)} +h\right) \right]_t.
\end{split}
\]
Thus we can take
$\displaystyle{
V(t)= e^{(1-\frac{n}{2k})\xi(t)}\left( e^{-n\xi(t)} +h\right)^{\frac{k-1}{2k}}.
}$
Then
\[
\begin{split}
&V(t) {L}_j[\phi]\\
= &\psi_{tt}(t)+ \left\{- \frac{V^{''}(t)}{V(t)} 
-\lambda_j \frac{C(t)}{A(t)} +\frac{ne^{-n\xi(t)}}{e^{-n\xi(t)} +h}(1-\xi_t^2(t))  \right\} \psi(t)\\
=& \psi_{tt}(t)+ E(t) \psi(t),
\end{split}
\]
where
\[
E(t) = \left\{- \frac{V^{''}(t)}{V(t)} 
-\lambda_j \frac{C(t)}{A(t)} +\frac{ne^{-n\xi(t)}}{e^{-n\xi(t)} +h}(1-\xi_t^2(t))  \right\},
\]
and
\[
\begin{split}
\frac{V^{''}(t)}{V(t)}  =& \left[1-\frac{n}{2k} -
\frac{n(k-1)  e^{-n\xi(t)}}{2k (e^{-n\xi(t)} +h)}\right] \xi^{''} \\
&+ \frac{n^2(k-1)h e^{-n\xi}  |\xi^{'}|^2}{2k(e^{-n\xi(t)} +h)^2 }
 + \left[ 1-\frac{n}{2k} -\frac{n(k-1) e^{-n\xi(t)}}{2k (e^{-n\xi(t)} +h)}\right]^2|\xi^{'}|^2.
\end{split}
\]
Using
\[
\xi_{tt}(t)=\frac{n}{2k}e^{-2k\xi(t)}(1-\xi_t^2(t))^{1-k} -\frac{n-2k}{2k}(1-\xi_t^2(t)),
\]
and
\[
e^{(2k-n)\xi(t)}(1-\xi_t^2(t))^k= e^{-n\xi(t)} +h,
\]
we have
\[
\begin{split}
\frac{V^{''}(t)}{V(t)}  =& \left[ \frac{(2k-n)^2}{4k^2}+\frac{n(n-2k)(k-2)  e^{-n\xi(t)}}{ 4k^2(e^{-n\xi(t)} +h)}
- \frac{n^2(k-1)}{4k^2}\left(\frac{ e^{-n\xi(t)}}{e^{-n\xi(t)} +h}\right)^2\right](1-|\xi^{'}|^2) \\
&+ \frac{n^2(k-1)h e^{-n\xi}  |\xi^{'}|^2}{2k(e^{-n\xi(t)} +h)^2 }+
 \left[ 1-\frac{n}{2k} -\frac{n(k-1) e^{-n\xi(t)}}{2k (e^{-n\xi(t)} +h)}\right]^2|\xi^{'}|^2\\
=& \left[ \frac{(2k-n)^2}{4k^2}+\frac{n(n-2k)(k-2)  e^{-n\xi(t)}}{ 4k^2(e^{-n\xi(t)} +h)}
- \frac{n^2(k-1)}{4k^2}\left(\frac{ e^{-n\xi(t)}}{e^{-n\xi(t)} +h}\right)^2\right] \\
&+\left\{- \frac{(2k-n)^2}{4k^2}- \frac{n(n-2k)(k-2)  e^{-n\xi(t)}}{ 4k^2(e^{-n\xi(t)} +h)}
+ \frac{n^2(k-1)}{4k^2}\left(\frac{ e^{-n\xi(t)}}{e^{-n\xi(t)} +h}\right)^2  \right.\\
& \left. + \frac{n^2(k-1)h e^{-n\xi}  }{2k(e^{-n\xi(t)} +h)^2 } +
\left[ 1-\frac{n}{2k} -\frac{n(k-1) e^{-n\xi(t)}}{2k (e^{-n\xi(t)} +h)}\right]^2\right\} |\xi^{'}|^2\\
=& \left[ \frac{(2k-n)^2}{4k^2}+\frac{n(n-2k)(k-2)  e^{-n\xi(t)}}{ 4k^2(e^{-n\xi(t)} +h)}
- \frac{n^2(k-1)}{4k^2}\left(\frac{ e^{-n\xi(t)}}{e^{-n\xi(t)} +h}\right)^2\right] \\
&+\left[\frac{n^2(k-1)h e^{-n\xi}  }{2k(e^{-n\xi(t)} +h)^2 }+\frac{n(n-2k)  e^{-n\xi(t)}}{4k(e^{-n\xi(t)} +h)} + 
\frac{n^2(k-1)}{4k}\left(\frac{ e^{-n\xi(t)}}{e^{-n\xi(t)} +h}\right)^2\right] |\xi^{'}|^2
\end{split}
\]
Since we are interested in getting an upper bound for $E(t)$, and it's not easy to find more useful
bound for terms of the form (negative factor)$|\xi^{'}|^2$, so will drop such terms in our estimates
and obtain, in the case $2k\le n$ and $\lambda_j\ge 2n$,
\[
\begin{split}
E(t) \le & -  \frac{(2k-n)^2}{4k^2}- \frac{n(n-2k)(k-2)  e^{-n\xi(t)}}{ 4k^2(e^{-n\xi(t)} +h)}
+ \frac{n^2(k-1)}{4k^2}\left(\frac{ e^{-n\xi(t)}}{e^{-n\xi(t)} +h}\right)^2\\
& -\frac{2n(n-k)}{k(n-1)} -\frac{2n^2(k-1)e^{-n\xi(t)}}{k(n-1)(e^{-n\xi(t)} +h)}\\
\le & -  \frac{(2k-n)^2}{4k^2}  -\frac{2n(n-k)}{k(n-1)} \\
&- \left\{  \frac{n(n-2k)(k-2)}{4k^2}- \frac{n^2(k-1)}{4k^2}+ \frac{2n^2(k-1)}{k(n-1)}\right\}
\frac{ e^{-n\xi(t)}}{e^{-n\xi(t)} +h} \\
=  & -  \frac{(2k-n)^2}{4k^2}  -\frac{2n(n-k)}{k(n-1)} 
- \frac{2(n+3)k^2-4(n+1)k -n(n-1)}{4k^2(n-1)} \frac{n e^{-n\xi(t)}}{e^{-n\xi(t)} +h}.
\end{split}
\]
When $2(n+3)k^2-4(n+1)k -n(n-1)\ge 0$, we obtain
\[
E(t) \le  -  \frac{(2k-n)^2}{4k^2}  -\frac{2n(n-k)}{k(n-1)} = -\left(\frac{n}{2k}-1\right)^2-
\frac{2n}{n-1}\left(\frac nk -1\right) \le -2 -\frac{2}{n-1},
\]
provided $2k\le n$; while if $2(n+3)k^2-4(n+1)k -n(n-1)\le 0$, we obtain
\[
\begin{split}
E(t) &\le  -  \frac{(2k-n)^2}{4k^2}  -\frac{2n(n-k)}{k(n-1)} +\frac{-2n(n+3)k^2+4n(n+1)k+n^2(n-1)}{4k^2(n-1)}\\
&= -\frac{n+1}{2}\,!
\end{split}
\]
In all cases we conclude the proof of Lemma~\ref{la3}.
\end{proof}

\end{document}